\numberwithin{equation}{section}
\newlist{Assumption}{enumerate}{1}
\setlist[Assumption]{label=A\arabic*}
\definecolor{Blue}{rgb}{0,0,1}
\definecolor{Red}{rgb}{1,0,0}
\definecolor{Green}{rgb}{0,1,0}
\newcommand{\reviewerA}[1]{#1}
\newcommand{\reviewerB}[1]{#1}
\newcommand{\reviewerBRtwo}[1]{#1}
\newcommand{\reviewerBRthree}[1]{#1}
\newcommand{\ourReReading}[1]{#1}
\newcommand{\fontContinuousFunc}{ }
\newcommand{\fontDiscreteFunc}{\mathscr}
\newcommand{\fontDiscrete}{\mathcal}
\newcommand{\knArg}[1]{k(t^{#1})}
\newcommand{\kn}{\knArg{n}}
\newcommand{\knDummy}{k(\timeDummy^n)}
\newcommand{\kmax}{k_\text{max}}
\newcommand{\bds}[1]{{\boldsymbol{#1}}}
\newcommand{\defeq}{:=}
\newcommand{\SPSD}[1]{\text{SPSD}(#1)}
\newcommand{\optDummy}{v}
\newcommand{\orthomat}[2]{\mathbb V_{{#2}}(\RR{{#1}})}
\newcommand{\mappingNo}{\mathcal I}
\newcommand{\mapping}[2]{\mappingNo(#1,#2)}
\newcommand{\methodAcronym}{ST-LSPG}
\newcommand{\timeSpace}{\fontContinuousFunc H}
\newcommand{\timeDiscreteFuncSpace}{\fontDiscreteFunc H}
\newcommand{\timeDiscreteSpace}{\RR{\ntimedof}}
\newcommand{\spatialSpace}{\RR{\nspacedof}}
\newcommand{\unrollfuncNo}{\bds{g}}
\newcommand{\unrollfunc}[1]{{\unrollfuncNo}(#1)}
\newcommand{\vectorizeNone}{\vectorizefuncNo} % timestep at optional
\newcommand{\vectorize}[1]{\ensuremath{\vectorizeNone(#1)}}  % timestep
\newcommand{\vectorizeinv}[1]{\ensuremath{\vectorizeNone^{-1}(#1)}}  %
\newcommand{\vectorizefuncNo}{\bds{h}}
\newcommand{\lipschitzK}{L_{\res}}
\newcommand{\inverseLipschitzK}{K_{\res}}
\newcommand{\lipschitzFK}{L_{\flux}}
\newcommand{\stateContinuousDummyEntryNo}{\boldsymbol{u}}
\newcommand{\stateContinuousDummy}[1]{\bds{\stateContinuousDummyEntryNo}} 
\newcommand{\stateTwoContinuousDummyEntryNo}{\boldsymbol{v}}
\newcommand{\stateTwoContinuousDummy}[1]{\bds{\stateTwoContinuousDummyEntryNo}} 
\newcommand{\metric}{\bds{\Theta}} 
\newcommand{\crossareaSymb}{A}
\newcommand{\externalforceSymb}{q}
\newcommand{\externalforce}{\boldsymbol \externalforceSymb}
\newcommand{\ndof}{\nspacedof}
\newcommand{\weightmatRows}{z}
\newcommand{\weightmatstRows}{\bar z}
\newcommand{\weightmatstRowsArg}[1]{\weightmatstRows^{#1}}
\newcommand{\weightmatstRowsn}{\weightmatstRowsArg{n}}
\newcommand{\unitvec}{\bds{e}}
\newcommand{\unitvecArg}[1]{\unitvec_{#1}}
\newcommand{\spacetimeindexNo}{\varphi}
\newcommand{\spacetimeindex}[2]{\varphi(#1,#2)}
\newcommand{\spacetimeindexIncludeAll}[1]{\spacetimeindex{\spaceindex{#1}}{\timeindex{#1}}}
\newcommand{\spaceindexNo}{\mathcal s}
\newcommand{\timeindexNo}{\mathcal t}
\newcommand{\spaceindex}[1]{\mathcal s_{#1}}
\newcommand{\timeindex}[1]{\mathcal t_{#1}}
\newcommand{\spacetimesampleset}{\mathfrak{st}}
\newcommand{\range}[1]{\text{range}(#1)}
\newcommand{\ndofPorts}[1]{n^p}
\newcommand{\card}[1]{|#1|}
\newcommand{\sampleSpaceGenCoord}{\mathcal D_{\redsolapproxST}}
\newcommand{\sigmamax}[1]{\sigma_\mathrm{max}(#1)}
\newcommand{\sigmamin}[1]{\sigma_\mathrm{min}(#1)}
\newcommand{\normEquiv}{P}
\newcommand{\lebesgue}{\Lambda}
\newcounter{remctr}
\newcounter{propctr}
\newcounter{proposctr}
\newcommand{\RR}[1]{\ensuremath{\mathbb{R}^{ #1 }}}
\newcommand{\NN}{\mathbb{N}}
\newcommand{\RRplus}[1]{\ensuremath{\mathbb{R}_+^{ #1 }}}
\newcommand{\natNo}{\NN}
\newcommand{\nat}[1]{\natNo(#1)}
\newcommand{\natZero}[1]{\{0,\ldots,#1\}}
\newcommand{\innat}[1]{\in\nat{#1}}
\newcommand{\innatSeq}[1]{=1,\ldots,#1}
\newcommand{\innatZero}[1]{\in\natZero{#1}}
\newcommand{\VV}{\mathbb{V}}
\newcommand{\Span}[1]{\mathrm{span}\{#1\}}
\newcommand{\identity}[1]{\boldsymbol I_{#1}}
\newcommand{\nstepExtraction}{P_n}
\newcommand{\lipschitz}{\kappa}
\newcommand{\lipschitzst}{\bar{\lipschitz}}
\newcommand{\pexit}{P_\text{exit}}
\newcommand{\specificheat}{\gamma}
\newcommand{\energypermass}{\epsilon}
\newcommand{\densitySymb}{\rho}
\newcommand{\velocitySymb}{u}
\newcommand{\energydensity}{e}
\newcommand{\pressureSymb}{p}
\newcommand{\specificgasconstant}{R}
\newcommand{\speedofsoundSymb}{c}
\newcommand{\machSymb}{M}
\newcommand{\temperatureSymb}{T}
\newcommand{\resSymb}{r}
\newcommand{\resRedSymb}{{\mathsf r}}
\newcommand{\fluxSymb}{f}
\newcommand{\paramSymb}{\mu}
\newcommand{\solSymb}{x}
\newcommand{\tensor}{\mathcal X}
\newcommand{\tensorUnfold}[1]{\boldsymbol{X}_{(#1)}}
\newcommand{\tensorReduceNo}{\mathcal X}
\newcommand{\tensorReduce}[1]{\tensorReduceNo(#1)}
\newcommand{\tensorresReduce}[1]{\tensorresReduceNo(#1)}
\newcommand{\tensorReduceUnfold}[2]{\boldsymbol{X}(#1)_{(#2)}}
\newcommand{\tensorres}{\mathcal R}
\newcommand{\tensorresUnfold}[1]{\boldsymbol{R}_{(#1)}}
\newcommand{\tensorresReduceNo}{\mathcal R}
\newcommand{\tensorresReduceUnfold}[2]{\boldsymbol{R}(#1)_{(#2)}}
\newcommand{\spaceSymb}{s}
\newcommand{\timeSymb}{t}
\newcommand{\singvalmatSymb}{\Sigma}
\newcommand{\leftsingmatSymb}{U}
\newcommand{\rightsingmatSymb}{V}
\newcommand{\leftsingvecSymb}{u}
\newcommand{\weightmatSymb}{A}
\newcommand{\weightvecSymb}{a}
\newcommand{\basismatspaceSymb}{\Phi}
\newcommand{\basisvecspaceSymb}{\phi}
\newcommand{\basisvectimeSymb}{\psi}
\newcommand{\basisvectimeFuncSymb}{\uppsi}
\newcommand{\basismatstSymb}{\Upsilon}
\newcommand{\samplematSymb}{Z}
\newcommand{\samplevecSymb}{z}
\newcommand{\dummySymb}{y}
\newcommand{\snapshotSymb}{X}
\newcommand{\stabilityconstantSymb}{h}
\newcommand{\nparam}{n_\mu}
\newcommand{\ntrain}{n_\text{train}}
\newcommand{\nrestrain}{n_\text{res}}
\newcommand{\nressample}{n_{\samplevecSymb}}
\newcommand{\nressamplest}{\bar n_{\samplevecSymb}}
\newcommand{\nrestimeind}{\bar n_{\timeSymb}}
\newcommand{\nresspaceind}{\bar n_{\spaceSymb}}
\newcommand{\nbasisres}{n_\resSymb}
\newcommand{\nbasisresst}{\bar n_\resSymb}
\newcommand{\nbasisspace}{{n_\spaceSymb}}
\newcommand{\nbasisspaceres}{{n_{\resSymb,\spaceSymb}}}
\newcommand{\nbasistime}{n_\timeSymb}
\newcommand{\nbasistimeres}{n_{\resSymb,\timeSymb}}
\newcommand{\nbasisst}{{n_{\spaceSymb\timeSymb}}}
\newcommand{\nspacedof}{N_\spaceSymb}
\newcommand{\ntimedof}{{N_\timeSymb}}
\newcommand{\nTimestepsFine}{\ntimedof}
\newcommand{\paramDomain}{\mathcal D}
\newcommand{\paramDomainTrain}{\paramDomain_\text{train}}
\newcommand{\paramDomainTrainres}{\paramDomain_\text{res}}
\newcommand{\ones}{\onebold}
\newcommand{\onesFunc}{\fontDiscreteFunc O}
\newcommand{\totaltime}{T}
\newcommand{\spatialSubspace}{\fontDiscrete S}
\newcommand{\temporalSubspaceFunc}{\fontDiscreteFunc T}
\newcommand{\spacesampleset}{\mathfrak{s}}
\newcommand{\timesampleset}{\mathfrak{t}}
\newcommand{\PG}{\text{PG}}
\newcommand{\Tbe}{\boldsymbol A_\text{BE}}
\newcommand{\Alm}{\boldsymbol A_\text{LM}}
\newcommand{\Blm}{\boldsymbol B_\text{LM}}
\newcommand{\objective}{\zeta}
\newcommand{\operatorerrorboundSymb}{\Lambda}
\newcommand{\stabilityconstantBE}{\bar{\stabilityconstantSymb}_\text{be}}
\newcommand{\resSymbst}{\bar{\resSymb}}
\newcommand{\resRedSymbst}{\bar{\resRedSymb}}
\newcommand{\res}{\boldsymbol \resSymb}
\newcommand{\resArg}[1]{\boldsymbol \resSymb^{#1}}
\newcommand{\resArgEntry}[2]{\resSymb_{#1}^{#2}}
\newcommand{\resRed}{\boldsymbol \resRedSymb}
\newcommand{\resRedApprox}{\tilde{\boldsymbol \resRedSymb}}
\newcommand{\resn}{\resArg{n}}
\newcommand{\resst}{\boldsymbol \resSymbst}
\newcommand{\resstApprox}{\tilde {\res}}
\newcommand{\resRedst}{\boldsymbol \resRedSymbst}
\newcommand{\param}{\boldsymbol \paramSymb}
\newcommand{\paramTrain}[1]{\param^{#1}_\text{train}}
\newcommand{\paramTrainres}[1]{\param^{#1}_\text{res}}
\newcommand{\paramDummy}{\boldsymbol \nu}
\newcommand{\dt}{\Delta \timeSymb}
\newcommand{\dtArg}[1]{\reviewerA{\dt^{#1}}}
\newcommand{\sol}{\boldsymbol \solSymb}
\newcommand{\solw}{\boldsymbol w}
\newcommand{\solTimeContinuous}{\sol^\star}
\newcommand{\solst}{\bar{\sol}}
\newcommand{\solSymbst}{\bar{\solSymb}}
\newcommand{\flux}{\boldsymbol \fluxSymb}
\newcommand{\rhsFunc}{\boldsymbol b}
\newcommand{\rhsFuncArg}[1]{\rhsFunc(t^{#1})}
\newcommand{\fluxst}{\bar{\flux}}
\newcommand{\solpgst}{\solst_{PG}}
\newcommand{\solDummy}{\boldsymbol w}
\newcommand{\solDummyFuncArg}[1]{\solDummy(t^{#1})}
\newcommand{\solDummyFuncArgDummy}[1]{\solDummy(\timeDummy^{#1})}
\newcommand{\solRedDummyScalar}{ \hat w}
\newcommand{\solRedDummyScalarArg}[1]{\solRedDummyScalar_{#1}}
\newcommand{\solRedDummy}{\hat {\boldsymbol w}}
\newcommand{\solDummyOpt}{{\boldsymbol \optDummy}}
\newcommand{\solRedDummyOpt}{\hat {\boldsymbol \optDummy}}
\newcommand{\timeArg}[1]{t^{#1}}
\newcommand{\timeDummy}{\tau}
\newcommand{\timeDomain}{[0,\totaltime]}
\newcommand{\timestepset}{\{\timeArg{n}\}_{n=0}^{\ntimedof}}
\newcommand{\solFunc}{\sol}
\newcommand{\solinfty}{\sol_\infty}
\newcommand{\soltwo}{\sol_2}
\newcommand{\solFuncArg}[1]{\solFunc(t^{#1};\param)}
\newcommand{\solFuncOnlyParam}{\solFunc(\cdot;\param)}
\newcommand{\solFuncOnlyParamTrain}[1]{\solFunc(\cdot;\paramTrain{#1})}
\newcommand{\solFuncOnlyParamProject}{\hat\solFunc(\param)}
\newcommand{\solArg}[1]{\sol(t^{#1};\param)}
\newcommand{\soln}{\solArg{n}}
\newcommand{\solnm}{\solArg{n-1}}
\newcommand{\spacetimesubspaceFunc}{\fontDiscreteFunc{ST}}
\newcommand{\spacetimesubspace}{\fontDiscrete{ST}}
\newcommand{\solapproxFunc}{\solapprox}
\newcommand{\solapproxFuncArg}[1]{\solapproxFunc(t^{#1};\param)}
\newcommand{\solapproxFuncOnlyParam}{\solapproxFunc(\cdot;\param)}
\newcommand{\redsolapproxFunc}{\redsolapprox}
\newcommand{\redsolapproxFuncOnlyParam}{\redsolapproxFunc(\cdot;\param)}
\newcommand{\redsolapproxEntryFunc}{\hat x}
\newcommand{\redsolapproxEntryFuncArg}[2]{\redsolapproxEntryFunc_{#1}(t^{#2};\param)}
\newcommand{\redsolapproxEntryFuncArgGen}[2]{\redsolapproxEntryFunc_{#1}(#2;\param)}
\newcommand{\redsolapproxEntryFuncOnlyParam}[1]{\redsolapproxEntryFunc_{#1}(\cdot;\param)}
\newcommand{\lineSearchParam}{\alpha}
\newcommand{\lineSearchParamArg}[1]{\lineSearchParam^{(#1)}}
\newcommand{\testbasisArg}[2]{\boldsymbol \xi_{ij}}
\newcommand{\solapproxArg}[1]{\solapprox(t^{#1};\param)}
\newcommand{\redsolapproxArg}[1]{\redsolapprox(t^{#1};\param)}
\newcommand{\solSTscalar}{y}
\newcommand{\solST}{\boldsymbol{\solSTscalar}}
\newcommand{\solapproxSTFunc}{\tilde\solST}
\newcommand{\solapproxST}{\tilde\solST}
\newcommand{\solapproxSTFuncArg}[1]{\solapproxSTFunc(t^{#1};\param)}
\newcommand{\solapproxSTFuncOnlyParam}{\solapproxSTFunc(\cdot;\param)}
\newcommand{\redsolapproxST}{\hat\solST}
\newcommand{\redsolapproxSTArg}[1]{\hat\solSTscalar_{#1}(\param)}
\newcommand{\redsolapproxSTArgNone}[1]{\hat\solSTscalar_{#1}}
\newcommand{\redsolapproxSTIt}[1]{\hat\solST^{(#1)}}
\newcommand{\deltaredsolapproxSTIt}[1]{\delta\hat\solST^{({#1})}}
\newcommand{\redsolapproxTrainNo}{\hat\solST_\text{res}}
\newcommand{\redsolapproxTrainArg}[1]{\redsolapproxTrainNo^{#1}}
\newcommand{\stbasismat}{\boldsymbol \Pi}
\newcommand{\stbasisvec}[1]{\boldsymbol \pi_{#1}}
\newcommand{\stbasisvecFunc}[1]{\boldsymbol \uppi_{#1}}
\newcommand{\mseError}{\text{relative error}}
\newcommand{\solnpg}{\sol_{PG}^n}
\newcommand{\solapprox}{\tilde\sol}
\newcommand{\redsolapprox}{\hat\sol}
\newcommand{\solapproxPG}{\solapprox_\PG}
\newcommand{\redsol}{\hat\sol}
\newcommand{\redsolst}{\redsol}
\newcommand{\solEntry}[1]{\solSymb_{#1}}
\newcommand{\resEntryi}[1]{\resSymb_{#1}}
\newcommand{\resRedEntryi}[1]{\mathsf\resSymb_{#1}}
\newcommand{\dummy}{\boldsymbol {\dummySymb}}
\newcommand{\reddummy}{\hat{\dummy}}
\newcommand{\solinit}{\sol^0}
\newcommand{\solinitEntry}[1]{\solSymb_{#1}^0}
\newcommand{\solref}{\solinit(\param)}
\newcommand{\solrefst}{\bar{\sol}_{\text{ref}}}
\newcommand{\singvalmat}{\boldsymbol \singvalmatSymb}
\newcommand{\singvalmatspace}{\singvalmat_\spaceSymb}
\newcommand{\singvalmatspaceres}{\singvalmat_{\resSymb,\spaceSymb}}
\newcommand{\singvalmattime}{\singvalmat_\timeSymb}
\newcommand{\singvalmattimeres}{\singvalmat_{\resSymb,\timeSymb}}
\newcommand{\leftsingvec}{\boldsymbol{\leftsingvecSymb}}
\newcommand{\leftsingvecspace}{\leftsingvec_\spaceSymb}
\newcommand{\leftsingvecspaceres}{\leftsingvec_{\resSymb,\spaceSymb}}
\newcommand{\leftsingvectime}{\leftsingvec_\timeSymb}
\newcommand{\leftsingvectimeres}{\leftsingvec_{\resSymb,\timeSymb}}
\newcommand{\leftsingvectimeArg}[1]{\leftsingvectime^{#1}}
\newcommand{\leftsingvectimeresArg}[1]{\leftsingvectimeres^{#1}}
\newcommand{\leftsingvecspacei}[1]{\leftsingvecspace^{#1}}
\newcommand{\leftsingvecspaceresi}[1]{\leftsingvecspaceres^{#1}}
\newcommand{\leftsingmat}{\boldsymbol{\leftsingmatSymb}}
\newcommand{\rightsingmat}{\boldsymbol{\rightsingmatSymb}}
\newcommand{\leftsingmatspace}{\leftsingmat_\spaceSymb}
\newcommand{\leftsingmatspaceres}{\leftsingmat_{\resSymb,\spaceSymb}}
\newcommand{\leftsingmattime}{\leftsingmat_\timeSymb}
\newcommand{\leftsingmattimeres}{\leftsingmat_{\resSymb,\timeSymb}}
\newcommand{\rightsingmatspace}{\rightsingmat_\spaceSymb}
\newcommand{\rightsingmatspaceres}{\rightsingmat_{\resSymb,\spaceSymb}}
\newcommand{\rightsingmattime}{\rightsingmat_\timeSymb}
\newcommand{\nnzNo}{\mathrm{nnz}}
\newcommand{\nnz}[1]{\nnzNo(#1)}
\newcommand{\rightsingmattimeres}{\rightsingmat_{\resSymb,\timeSymb}}
\newcommand{\basisvectime}{\boldsymbol{\basisvectimeSymb}}
\newcommand{\basisvectimei}[1]{\basisvectime_{#1}}
\newcommand{\basisvectimeij}[2]{\basisvectime^{#1}_{#2}}
\newcommand{\basisvectimeresi}[1]{\basisvectime_{r,#1}}
\newcommand{\basisvectimeresij}[2]{\basisvectime^{#1}_{r,#2}}
\newcommand{\basisvectimeFunc}{\boldsymbol{\basisvectimeFuncSymb}}
\newcommand{\basisvectimeFuncij}[2]{\basisvectimeFunc_{#2}^{#1}}
\newcommand{\basisvectimeresFuncij}[2]{\basisvectimeFunc_{r,#2}^{#1}}
\newcommand{\basisvectimeFunci}[1]{\basisvectimeFunc_{#1}}
\newcommand{\basisvectimeresFunci}[1]{\basisvectimeFunc_{r,#1}}
\newcommand{\operatorerrorbound}{\boldsymbol{\operatorerrorboundSymb}}
\newcommand{\basisDummy}{\boldsymbol{V}}
\newcommand{\basismatspace}{\boldsymbol{\basismatspaceSymb}}
\newcommand{\basismatspaceres}{\boldsymbol{\basismatspaceSymb}_r}
\newcommand{\basisvecspace}{\boldsymbol{\basisvecspaceSymb}}
\newcommand{\basisvecspacei}[1]{\boldsymbol{\basisvecspaceSymb}_{#1}}
\newcommand{\basisvecspaceresi}[1]{\boldsymbol{\basisvecspaceSymb}_{r,#1}}
\newcommand{\basismatres}{\basismatspace_\resSymb}
\newcommand{\basismatrest}{{\bar \basismatspace}_\resSymb}
\newcommand{\basismatrestvec}[1]{{\bar {\boldsymbol{\basisvecspaceSymb}}_{r,#1}}}
\newcommand{\basisres}{\boldsymbol \Pi_r}
\newcommand{\mappingresNo}{\mathcal I_r}
\newcommand{\mappingres}[2]{\mappingresNo(#1,#2)}
\newcommand{\basisresvecArg}[1]{\boldsymbol \pi_{r,#1}}
\newcommand{\basisresvecij}[2]{\basisresvecArg{\mappingres{#1}{#2}}}
\newcommand{\rfactor}{{\boldsymbol R}}
\newcommand{\basisvecressti}[1]{\basismatrestvec{#1}}
\newcommand{\errorvec}{\boldsymbol \varepsilon}
\newcommand{\errorvecFunc}{\boldsymbol \upvarepsilon}
\newcommand{\errorvecFuncEntry}[1]{\upvarepsilon_{#1}}
\newcommand{\timesamplestoadd}[1]{\nrestimeind^{#1}}
\newcommand{\spacesamplestoadd}[1]{\nresspaceind^{#1}}
\newcommand{\spacetimesamplestoadd}[1]{\nressamplest^{#1}}
\newcommand{\weightmat}{\boldsymbol{\weightmatSymb}}
\newcommand{\weightmatst}{\bar\weightmat}
\newcommand{\weightmatstArg}[1]{\weightmatst^{#1}}
\newcommand{\weightvecstij}[2]{\bar{\weightvecSymb}_{{#1}{#2}}}
\newcommand{\snapshots}{\boldsymbol \snapshotSymb}
\newcommand{\snapshotmatspace}{\tensorUnfold{1}}
\newcommand{\snapshotmattime}{\tensorUnfold{2}}
\newcommand{\samplemat}{\boldsymbol \samplematSymb}
\newcommand{\samplematst}{\bar{\samplemat}}
\newcommand{\spatialSubspacei}[1]{\spatialSubspace_{#1}}
\newcommand{\temporalSubspaceFuncArg}[1]{\temporalSubspaceFunc_{#1}}
\newcommand{\bmat}[1]{\begin{bmatrix}#1\end{bmatrix}} % Need \usepackage{amsmath}
\def\onebold{\boldsymbol{1}}
\def\wbold{\boldsymbol{w}}
\def\ybold{\boldsymbol{y}}
\def\zerobold{{\bf 0}}
\newtheorem{remark}{Remark}[section]
\title{Space--time least-squares Petrov--Galerkin projection for\\ nonlinear model reduction}
\author{
Youngsoo Choi\thanks{\ourReReading{Work was performed while employed in the Extreme-scale Data Science and Analytics
Department, Sandia National Laboratories, Livermore, CA 94550. Current
affiliation: Lawrence Livermore National Laboratory 
(\href{mailto:choi15@llnl.gov}{choi15@llnl.gov}).}}\and
Kevin Carlberg\thanks{Extreme-scale Data Science and Analytics
Department, Sandia National Laboratories, Livermore, CA 94550 
(\href{mailto:ktcarlb@sandia.gov}{ktcarlb@sandia.gov}).}}
\title{Space--time least-squares Petrov--Galerkin projection for\\ nonlinear model reduction}
\author{
Youngsoo Choi\thanks{\ourReReading{Work was performed while employed in the Extreme-scale Data Science and Analytics
Department, Sandia National Laboratories, Livermore, CA 94550. Current
affiliation: Lawrence Livermore National Laboratory 
(\href{mailto:choi15@llnl.gov}{choi15@llnl.gov}).}}\and
Kevin Carlberg\thanks{Extreme-scale Data Science and Analytics
Department, Sandia National Laboratories, Livermore, CA 94550 
(\href{mailto:ktcarlb@sandia.gov}{ktcarlb@sandia.gov}).}}
\begin{document}
\setlength{\abovedisplayskip}{3pt}
\setlength{\belowdisplayskip}{3pt} 
\setlength{\abovedisplayshortskip}{3pt} 
\setlength{\belowdisplayshortskip}{3pt}

\maketitle

\begin{abstract}
This work proposes a space--time least-squares Petrov--Galerkin
(\methodAcronym) projection method for model reduction of nonlinear dynamical
systems.  In contrast to typical nonlinear model-reduction methods that first
apply (Petrov--)Galerkin projection in the spatial dimension and subsequently
apply time integration to numerically resolve the resulting low-dimensional
dynamical system, the proposed method applies projection in space and time
simultaneously. To accomplish this, the method first introduces a
low-dimensional space--time trial subspace, which can be obtained by computing tensor
decompositions of state-snapshot data. The method then computes
discrete-optimal approximations in this space--time trial subspace by
minimizing the residual arising after time discretization over all space and
time in a weighted $\ell^2$-norm. This norm can be defined
to enable complexity reduction (i.e., hyper-reduction) in time, which leads to
space--time collocation and space--time Gauss--Newton with Approximated
	Tensors (GNAT) variants of the \methodAcronym\
method. Advantages of the approach relative to typical
spatial-projection-based nonlinear model reduction methods such as Galerkin
projection and least-squares Petrov--Galerkin projection include a
reduction of both the spatial and temporal dimensions of the dynamical system,
%(2) the \reviewerBRtwo{possibility of removing} spurious temporal modes (e.g., unstable growth) from the
%state space, 
and \reviewerBRthree{\textit{a priori}} error bounds that
\reviewerBRthree{bound the solution error by the best space--time
approximation error and whose stability constants} exhibit slower growth in
time.
Numerical examples performed on model problems in fluid dynamics demonstrate
the ability of the method to generate orders-of-magnitude computational
savings relative to spatial-projection-based reduced-order models without
sacrificing accuracy \reviewerBRtwo{for a fixed spatio-temporal discretization}.
\end{abstract}
\begin{keywords}
 space--time projection, least-squares Petrov--Galerkin projection,
 residual minimization, model reduction, nonlinear dynamical systems
	\end{keywords}
	\begin{AMS}
	65L05,65L06,65L60,65M15,65M22,68U20
	\end{AMS}

\section{Introduction}

Reduced-order models (ROMs) of nonlinear dynamical systems are essential for
enabling high-fidelity computational models to be used in many-query and
real-time applications such as uncertainty quantification, design
optimization, and control.  Such ROMs reduce the \textit{spatial dimensionality} of the
dynamical system by performing a projection process on the governing system of
nonlinear ordinary differential equations (ODEs). The resulting ROM is then
resolved in time via numerical integration, typically with the same time
integrator and time step
employed for the high-fidelity model.
Unfortunately, many applications require simulating the model over long time
intervals, leading to high \textit{temporal dimensionality} characterized by
the number of time instances in the time discretization.  For example,
many	applications in fluid dynamics require long-time simulations to compute adequate statistics such as power spectral densities;
structural-dynamics applications can demand long-time integration when
structures undergo significant deformations; long-time integration is
necessary to assess stability of planetary orbits \cite{ito2002long};
molecular dynamics simulations \cite{elber2016perspective}
and condensed phase dynamics \cite{space1993long} also require long-time
integration.

As such, ROMs are often characterized by low spatial dimensionality, but high
temporal dimensionality, which can limit realizable computational savings in
practice.  It also renders ROMs ineffective in applications
that demand a low temporal dimension for computational tractability.  For
example, a high temporal dimension can render intrusive uncertainty
quantification methods (e.g., stochastic Galerkin
\cite{babuska2004galerkin,ghanem2003stochastic}) and simultaneous analysis and
design (SAND) in PDE-constrained optimization
\cite{haftka1989simultaneous,orozco1997reduced,choi2012simultaneous,choi2015practical,pearson2012new,rees2010all,barker2016fast}
computationally intractable, as the dimension of the system of equations
arising in such applications scales with the temporal dimension of the
problem. 
%In addition, computing adjoint-based sensitivities required for error
%estimation or nested analysis and design (NAND) in PDE-constrained
%optimization incurs storage and computational costs that scale with the
%temporal dimension of the problem.  
Further, rigorous error bounds for these ROMs
typically grow exponentially in time
\cite{rathinam:newlook,knezevic2011reduced,nguyen2009reduced,carlbergJCP},
which renders certification challenging. This work aims to devise a
model-reduction methodology that enables significant reduction in both the
spatial and temporal dimensions of the dynamical system, while simultaneously
producing error bounds that exhibit slower growth in time. 

Several attempts have been made to address this temporal-complexity bottleneck
in model reduction. First, several authors have
	demonstrated that larger stable time steps (and thus a smaller number of
	time instances) can be taken with a ROM relative to the high-fidelity model
	in the case of explicit time integration
	\cite{krysl_nl_rom_dynam_struct_01, lucia2004reduced, taylor2010real}. 
	However, this approach is not always feasible, as many problems
	(e.g., compressible fluid dynamics, chemical kinetics) exhibit stiff
	dynamics that require implicit time integration, where the time step is
	limited by accuracy rather than stability. Increasing the time step in these
	contexts could significantly degrade time-discretization accuracy.

Time-parallel methods (e.g., parareal \cite{lions2001parareal}, PITA
\cite{farhat2003time}, and MGRIT \cite{mgrit}) aim to reduce the (serial) wall time
incurred by a fine temporal discretization. These approaches enable
dynamical-system simulations to be parallelized in the temporal domain, and
are well suited for reduced-order models, as spatial parallelism alone quickly
saturates for such low-dimensional models. However, while time-parallel
methods can reduce the wall-time of such simulations, they do not reduce
temporal dimensionality; in fact, they increase the total computational cost
of simulations (as measured in core--hours).

More recently, a `forecasting' approach was proposed that employs time-domain
data (i.e., snapshot-matrix right singular vectors) generated during the
offline stage of model reduction to produce accurate forecasts of the solution
during online ROM simulations via gappy proper orthogonal decomposition (POD)
\cite{sirovichOrigGappy}. These forecasts can be used (1) to generate accurate
initial guesses for the Newton solver at each time step
\cite{carlberg2015decreasing}, or (2) as an accurate coarse propagator to
accelerate convergence of time-parallel methods \cite{carlberg2016data}. While
both approaches reduce the computational cost incurred by time integration
(by reducing the total number of Newton iterations and the wall time,
respectively), neither directly reduces the temporal dimension of the
ROM.

Alternatively, space--time ROMs have been devised in the reduced basis
\cite{urban2012new,urban2014improved,yanoPateraUrban,yanoReview},
POD--Galerkin \cite{volkwein2006algorithm,baumann2016space}, and ODE-residual
minimization \cite{constantineResMin} contexts. These approaches successfully
reduce the temporal dimension of the underlying model by performing projection
with a low-dimensional space--time basis. In addition,
these methods \reviewerBRtwo{can
remove} spurious temporal modes (e.g., unstable growth, artificial
dissipation) from the state space, which can in principle lead to more
accurate long-time responses.
Further, space--time reduced-basis
ROMs \cite{urban2012new,urban2014improved,yanoPateraUrban,yanoReview} are
equipped with error bounds that are observed to grow linearly (rather than
exponentially) in the final time. While these approaches are quite promising,
they exhibit several drawbacks in terms of applicability to general
large-scale nonlinear dynamical
systems.  First, the space--time reduced-basis approaches require a
space--time finite-element discretization for the high-fidelity model. Such
discretizations are uncommon, as most computational models used in practice
are constructed via spatial discretization (e.g., with a finite
difference, finite volume, or finite element method) followed by time integration (e.g., with a linear multistep or
Runge--Kutta scheme). Second, these space--time ROM
approaches (with the exception of a collocation-like approach proposed in
Ref.~\cite{constantineResMin}) provide no mechanism for complexity reduction
(i.e., hyper-reduction), which precludes these techniques from reducing the
computational complexity in the presence of general nonlinearities.  Further,
the above approaches (with the exception of Ref.~\cite{baumann2016space})
compute only a single space--time basis vector per training simulation. 
This can severely limit the dimensionality
(and accuracy) of the resulting space--time ROM
in the case of
large-scale nonlinear dynamical-system models, where the number of training simulations may be limited by
computational-cost considerations.

To this end, we propose a novel space--time least-squares Petrov--Galerkin
(\methodAcronym) method that combines advantages of the above space--time ROM
approaches, as it: (1) reduces the spatial and temporal dimensions of the
dynamical system; 
%(2) \reviewerBRtwo{can remove} spurious temporal modes from the ROM response;
(2) is equipped with \reviewerBRthree{\textit{a priori}} error bounds that 
\reviewerBRthree{bound the solution error by the best space--time
approximation error and whose stability constants}
exhibit subquadratic growth in time;
(3) is applicable to general nonlinear dynamical-system models;  (4) is
equipped with hyper-reduction to reduce the complexity in the presence of
general nonlinearities; and (5) can extract multiple space--time basis vectors
from each training simulation via tensor decomposition.  To realize these
advantages, the approach adopts aspects of both the forecasting and
space--time ROM approaches described above.

%we propose to apply a discrete-optimal space--time projection to
%nonlinear ODE models that have been discretized in time using (implicit or
%explicit) linear multistep schemes.  The method---which ---adopts a
%least-squares Petrov--Galerkin (LSPG) ROM formulation
%\cite{CarlbergGappy,carlbergGalDiscOpt} to perform discrete-optimal
%spatiotemporal projection.

The original spatial-projection-based LSPG method
\cite{CarlbergGappy,carlbergJCP,carlbergGalDiscOpt} performed the following
steps: (1) apply temporal discretization\footnote{When explicit time
integration is used, LSPG projection is equivalent to Galerkin projection
\cite{carlbergGalDiscOpt}.} to the system of ODEs characterizing the
high-fidelity model, (2) introduce a low-dimensional \textit{spatial} trial
subspace, and  (3) compute the solution in the spatial subspace that minimizes
(in a weighted $\ell^2$-norm) the discrete residual arising at each time step.
This approach does not reduce temporal dimensionality, as the ROM and
high-fidelity model time steps are typically the same.  Instead, the proposed
\methodAcronym\ method executes the following steps: (1) apply time
integration to the system of nonlinear ODEs with an implicit or explicit
linear multistep method, (2) introduce a low-dimensional \textit{space--time}
trial subspace, and (3) compute the solution in the space--time trial subspace
that minimizes the discrete residual over all space and time in a weighted
$\ell^2$-norm.	This norm can be selected to enable hyper-reduction based on
both collocation and gappy POD; this is in analogy to collocation
\cite{LeGresleyThesis,astrid2007mpe,ryckelynck2005phm} and gappy POD
\cite{astrid2007mpe,carlbergJCP} methods applied to spatial-projection-based
ROMs.  

Specific contributions of this work include:
\begin{itemize} 
\item A novel \methodAcronym\ model-reduction  method for
parameterized nonlinear dynamical systems
(Section \ref{sec:spaceTimeLSPG}), including choices of weighting matrices to
enable hyper-reduction (Section \ref{sec:spacetimehyper}).
\item Several strategies for computing the `ingredients' characterizing the
\methodAcronym\ method: the space--time trial subspace via tensor
decomposition (Section \ref{sec:spacetimetrialconstruct}), the
space--time residual basis in the case of space--time GNAT (Section \ref{sec:spacetimeresidual}),
the sampling matrix to enable space--time hyper-reduction (Section \ref{sec:constructSamplingMatrix}),
and the initial guess for the Gauss--Newton solver used to compute the
\methodAcronym\ solution (Section \ref{sec:initialGuess}).
\item \textit{A priori} error bounds that enable the error in the
\methodAcronym\ solution to be bounded by the best approximation error as
measured in the $\ell^2$-norm over all time (Theorem \ref{thm:aprioritwo}),
and the $\ell^\infty$-norm over all time (Theorem \ref{thm:apriori}).
Critically, the stability constants for these bounds grow linearly and
subquadratically in time, respectively (Remark \ref{rem:stabilityConstant}).
\item \textit{A posteriori} error bounds that enable the error in the
\methodAcronym\ solution to be bounded by the value of the objective function
minimized by the method (Corollary \ref{eq:aposterioribound}).
\item Numerical experiments that demonstrate the ability of the method to
produce significant computational-cost savings relative to existing
spatial-projection-based nonlinear ROMs without sacrificing accuracy (Section
\ref{sec:experiments}).
\end{itemize}

Ref.\ \cite{constantineResMin}, which also proposed a space--time
residual-minimizing projection applicable to parameterized nonlinear dynamical
systems, is perhaps the most closely related work to the proposed technique.  Our work
can be distinguished from that contribution in several ways. First, our
approach applies residual minimization to the discretized ODE (i.e.,
O$\Delta$E) rather than the time-continuous ODE over all time. This
facilitates deriving error bounds with respect to the (fully discrete)
full-order-model solution (Section \ref{sec:error}); 
Ref.~\cite{constantineResMin} did not provide
error bounds for the
residual-minimizing approximation (it provides an error bound only for the best
linear-subspace approximation). Also, Ref.~\cite{constantineResMin} enforces
the sum of generalized-coordinate values to equal one; our method does not
require such a constraint, which can enable lower objective-function values.
Further, our approach enables multiple space--time basis vectors to be
extracted from each training simulation via tensor decomposition (Section
\ref{sec:spacetimehyper}); Ref.~\cite{constantineResMin} computes only a
single space--time basis vector from each training simulation, which can
severely limit the dimensionality of the space--time basis in practice.
Further, our proposed approach provides several mechanisms for enabling
hyper-reduction, i.e., complexity reduction of the low-dimensional model
(Section \ref{sec:spacetimehyper}); Ref.~\cite{constantineResMin} proposes one
approach, which is analogous to the space--time collocation method described
in Section \ref{sec:spacetimecoll}.

%In the remainder of this paper, we denote matrices by capitalized bold
%letters, vectors by lowercase bold letters, and scalars by unbolded letters.
%The columns of a matrix $\boldsymbol{A}\in \RR{m\times {\timestepit}}$ are
%denoted by $\boldsymbol{a}_{i}\in \RR{m}$, $i\innat{{\timestepit}}$ with
%$\nat{a}\defeq \{1,\ldots , a\}$ such that $\boldsymbol{A}\equiv \left[
%\boldsymbol{a}_{1}\ \cdots \ \boldsymbol{a}_{\timestepit }\right] $. {We also
%define $\natZero{a}\defeq\{0,\ldots, a\}$.} The scalar-valued matrix elements
%are denoted by $a_{ij}\in \RR{}$ such that $\boldsymbol{a}_{j}\equiv \left[
%a_{1j}\ \cdots \ a _{mj}\right] ^{T}$, $j\innat{{\timestepit}}$. A superscript
%denotes the value of a variable at that time instance, e.g.,
%$\stateArg{\timestepit }$ is the value of $\state $ at time $\timestepit \dt
%$, where $\dt $ is the time step.

We proceed by describing the time-continuous and time-discrete representations
of the full-order model in Section \ref{sec:FOM}, followed by a summary of the
previously developed spatial-projection-based LSPG method in Section \ref{sec:spatialLSPG}. Then, we
present the \methodAcronym\ method in Section \ref{sec:spaceTimeLSPG},
followed by proposals for the ingredients characterizing the method in Section
\ref{sec:ingredients}. Section \ref{sec:error} provides error analysis, 
Section \ref{sec:experiments} reports numerical experiments, and Section
\ref{sec:conclusions} concludes the paper.

\section{Full-order model}\label{sec:FOM}
We begin by deriving time-continuous (ODE) and time-discrete (O$\Delta$E)
formulations of the full-order model (FOM).
\subsection{Time-continuous representation} 
 We consider the FOM to be a parameterized nonlinear dynamical system
 characterized by a parameterized system of nonlinear ODEs
 \begin{equation} \label{eq:fom}
  \frac{d\solTimeContinuous}{dt} = \flux(\solTimeContinuous,t; \param),\quad\quad
  \solTimeContinuous(0;\param) = \solinit(\param),
 \end{equation} 
 where $t\in[0,\totaltime]$ denotes time with
$\totaltime\in\RRplus{}$ denoting the final time, and
 $\solTimeContinuous(t;\param)$ denotes the time-dependent, parameterized state
 implicitly defined as the solution to problem \eqref{eq:fom}
with
 $\solTimeContinuous:\timeDomain\times \paramDomain\rightarrow \RR{\nspacedof}$
 and $\solTimeContinuous(\cdot;\param)\in\RR{\nspacedof}\otimes \timeSpace$.
 Here,
 $\timeSpace$ denotes the set of sufficiently smooth functions 
 from $\timeDomain$ to $\RR{}$ 
 (e.g., $\timeSpace = H^1(\timeDomain)$)
 under consideration \reviewerA{and $\otimes$ denotes the tensor product}.
 Further, 
 $\flux: \RR{\nspacedof} \times [0,\totaltime] \times \paramDomain \rightarrow
 \RR{\nspacedof}$ with $(\solDummy,\timeDummy;\paramDummy)\mapsto\flux(\solDummy,\timeDummy;\paramDummy)
 $ denotes the velocity, which we assume to be nonlinear in at least its
 first argument,
 $\solinit:\paramDomain\rightarrow \RR{\nspacedof}$ denotes the initial state,
 and $\param \in \paramDomain$ denotes the parameters with parameter domain
 $\paramDomain\subseteq\RR{\nparam}$.

\subsection{Time-discrete representation: linear multistep methods}
We now introduce a \reviewerA{(generally nonuniform)} time discretization characterized by time step
$\dtArg{n}\in\RRplus{}$ and time instances $\timeArg{n} = \timeArg{n-1} +
\dtArg{n}$,
$n\innat{\ntimedof}$ with $\timeArg{0} = 0$, $\ntimedof\in\natNo$, and
$\nat{N}\defeq\{1,\ldots,N\}$.\footnote{\reviewerA{Note that by assuming a
fixed time discretization, we do not allow for adaptive time-step selection;
enabling the proposed \methodAcronym\ method to be applied in the context of 
adaptive time stepping is the subject of future research.}}
%We denote the set of time instances associated with this discretization as
%$\timestepset = \{\timeArg{n}\}_{n=0}^{\ntimedof}\subset\timeDomain$.
Applying a linear $k$-step method\footnote{\reviewerA{We consider only linear
multistep methods for simplicity of presentation. One could develop the
proposed \methodAcronym\ method for alternative time integrators in a rather
straightforward manner;
see, e.g., 
Ref.~\cite{carlbergGalDiscOpt}, which develops LSPG models for explicit, fully implicit, and
diagonally implicit Runge--Kutta schemes.}} to numerically solve Eq. \eqref{eq:fom}
using this discretization yields an
O$\Delta$E, which is characterized by the following system of 
 nonlinear algebraic equations to be solved for the numerical solution
 $\solFuncArg{n}\in\RR{\nspacedof}$ at each time instance:
%\begin{align} 
%\resk(\solFuncArg{n},\ldots,\solFuncArg{n-k};\timeArg{n},\ldots,\timeArg{n-k};\param)
%= \zerobold.
%\end{align} 
 \begin{align} \label{eq:LMMresidual}
   \resn(\solFuncArg{n},\ldots,\solFuncArg{n-\kn };\param)
	 = \zerobold,\quad n=1,\ldots,\ntimedof.
 \end{align} 
Here, $\kn(\leq n)$ denotes the number
 of steps used by the linear multistep method at time instance $n$ and
 the residual is defined as
%\begin{align}\label{eq:residual} 
%\begin{split}
%\resk&:(\solDummy^0,\ldots,\solDummy^k;\timeDummy^0,\ldots,\timeDummy^k;\paramDummy) \mapsto
%\sum_{j=0}^k\alpha_j\solDummy^{j}
%-\dt
%\sum_{j=0}^k\beta_j\flux(\solDummy^j,\timeDummy^j;\paramDummy)\\
%&:\RR{\nspacedof}\times\cdots\times\RR{\nspacedof}\times\timeDomain\times\cdots\timeDomain\times\paramDomain\rightarrow\RR{\nspacedof},
%\end{split}
%\end{align} 
 \begin{align}\label{eq:residual} 
 \begin{split}
  \resn&:(\solDummy^0,\ldots,\solDummy^{\kn};\paramDummy) \mapsto
	\sum_{j=0}^{\kn }\alpha_j^n\solDummy^{j}
	-\dtArg{n}
							\sum_{j=0}^{\kn }\beta_j^n\flux(\solDummy^j,t^{n-j};\paramDummy) \\ 
&:\RR{\nspacedof}\otimes\RR{\kn+1}\times\paramDomain\rightarrow\RR{\nspacedof},
 \end{split}
 \end{align} 
%\begin{align} 
%\sum_{j=0}^k \alpha_j\soli{n-j} = \dt\sum_{j=0}^k\beta_j \flux(\soli{n-j},t^{n-j};\param),
%\end{align} 
where coefficients $\alpha_j^n, \beta_j^n\in\RR{}$, $j=0,\ldots,\kn$ define
 a particular linear multistep scheme, 
 $\alpha_0^n \neq 0$, and
 $\sum_{j=0}^{\kn}\alpha_j^n = 0$ is necessary for consistency. We note that the
 numerical solution satisfies
 \begin{equation}\label{eq:FOMcontinuous}
 \solFuncOnlyParam\in\spatialSpace\otimes\timeDiscreteFuncSpace
 \end{equation}
 with $\timeDiscreteFuncSpace$
 the set of functions from $\timestepset$ to $\RR{}$. We refer to
 $\spatialSpace\otimes\timeDiscreteFuncSpace$ 
 as the `FOM trial space' in which
 solutions are sought. An isomorphism exists between
 $\timeDiscreteFuncSpace$ and $\timeDiscreteSpace$ provided by the
 (invertible) function
$\unrollfuncNo$ that `unrolls' time according to the
 time discretization as
 \begin{align}\label{eq:unroll}
 \begin{split}
 \unrollfuncNo&:{\stateContinuousDummyEntryNo} \mapsto \left[
 \stateContinuousDummyEntryNo(\timeArg{1})\
 \cdots\
 \stateContinuousDummyEntryNo(\timeArg{\nTimestepsFine})
 \right]\\
 &:\RR{p}\otimes\timeDiscreteFuncSpace\rightarrow\RR{p}\otimes\timeDiscreteSpace,
 \end{split}
 \end{align}
 where $p\in\natNo$ is an arbitrary dimension.
This yields an equivalent (discrete) representation of the FOM space as
 \begin{equation} \label{eq:FOMdiscrete}
%\solFullDiscreteOnlyParam \defeq 
\unrollfunc{\solFuncOnlyParam}=\left[\solArg{1}\ \cdots\
\solArg{\nTimestepsFine}\right]\in
\RR{\nspacedof}\otimes\timeDiscreteSpace.
  \end{equation}
\section{Least-squares Petrov--Galerkin method}\label{sec:spatialLSPG} 
This section describes the trial subspace (Section \ref{sec:LSPGsubspace}) and projection
(Section \ref{sec:LSPGprojection}) employed by the original
spatial-projection-based LSPG method \cite{CarlbergGappy,carlbergJCP,carlbergGalDiscOpt}.
\subsection{Spatial trial subspace}\label{sec:LSPGsubspace}
The original LSPG method
 applies spatial
projection using a subspace $\spatialSubspace \defeq
\Span{\basisvecspace_i}_{i=1}^\nbasisspace
\subseteq \RR{\nspacedof}$ with
$\dim(\spatialSubspace)=\nbasisspace\leq\nspacedof$ (hopefully with
$\nbasisspace\ll\nspacedof$).  Using this subspace, the LSPG
method approximates the numerical solution at each time instance
$\timeArg{n}$, $n\innat{\ntimedof}$ as $ \solFuncArg{n}\approx\solapproxFuncArg{n}
\in\solref+\spatialSubspace $ or equivalently
\begin{equation}\label{eq:spatialLSPGsolution}
\solapproxFuncArg{n}=
\solref + \sum_{i=1}^\nbasisspace\basisvecspace_i\redsolapproxEntryFuncArg{i}{n}
\end{equation}
where
$\redsolapproxEntryFuncOnlyParam{i}\in\RR{}\otimes\timeDiscreteFuncSpace$ with
$\redsolapproxEntryFuncArgGen{i}{0}=0$,
$i\innat\nbasisspace$
denotes the generalized coordinates.
%and
%$\solref\in\spatialSpace$ denotes a reference solution that can be defined to ensure
%consistency \cite{CarlbergGappy,carlbergJCP}.\footnote{While the
 %reference state $\solref$ may depend on the parameters $\param$, we omit this
 %dependence for notational simplicity.}
This approach is equivalent to enforcing
 the approximated numerical solution $\solapprox(t;\param)$ with 
 $\solapprox:\timeDomain\times \paramDomain\rightarrow \RR{\nspacedof}$
 to reside in \reviewerA{an affine} `spatial trial subspace'
\begin{equation} \label{eq:LSPGcontinuous}
\solapprox(\cdot;\param)\in\solref\otimes
\onesFunc+\spatialSubspace\otimes
\timeDiscreteFuncSpace\subseteq\spatialSpace\otimes\timeDiscreteFuncSpace,
\end{equation} 
where $\onesFunc\in\timeDiscreteFuncSpace$ is defined as
$\onesFunc:\timestepset\rightarrow
1$.
Noting that $\ones_{\ntimedof}=\unrollfunc{\onesFunc}$, where $\ones_{p}$
denotes a $p$-vector of ones, we also have 
\begin{equation} \label{eq:LSPGdiscrete}
%\solapproxFullDiscreteOnlyParam 
%\defeq 
\unrollfunc{\solapproxFuncOnlyParam}
%=\left[\solapproxArg{1}\ \cdots\
%\solapproxArg{\nTimestepsFine}\right]
\in
\solref \otimes \ones_{\ntimedof} + \spatialSubspace \otimes
	 \timeDiscreteSpace \subseteq \RR{\nspacedof} \otimes \timeDiscreteSpace.
\end{equation} 

\begin{remark}[LSPG projection does not reduce the temporal dimension]
	\label{rem:lspgTemporalDim}
%Eqs.~\eqref{eq:FOMcontinuous} and \eqref{eq:LSPGcontinuous}, 
Comparing \eqref{eq:FOMdiscrete} and \eqref{eq:LSPGdiscrete} reveals that
LSPG projection reduces spatiotemporal dimension of the problem from
$\dim(\RR{\nspacedof} \otimes \timeDiscreteSpace)=\ndof\ntimedof$ to
$\dim(\solref \otimes \ones_{\ntimedof} + \spatialSubspace \otimes
\timeDiscreteSpace)=\nbasisspace\ntimedof$. Thus, while the spatial dimension
has been reduced from $\ndof$ to $\nbasisspace$, the temporal dimension
$\ntimedof$ has not been reduced. This can preclude significant
computational-cost savings when the original problem is characterized by a
long time interval $\totaltime$, or if \reviewerA{small time steps $\dtArg{n}$
	are} needed for
accuracy (e.g., for stiff dynamical systems).
\end{remark}
 
\subsection{Spatial projection}\label{sec:LSPGprojection}
The LSPG ROM computes an approximate solution by sequentially minimizing the discrete residual arising at each
	 time instance, i.e.,
\begin{equation} \label{eq:lspggnat}
  \solapproxArg{n} = \underset{\solDummyOpt\in\solref+\spatialSubspace}{\arg\min}\left \|\weightmat 
\resn(\solDummyOpt,\solapproxArg{n-1},\ldots,\solapproxArg{n-\kn };\param)	
	\right \|_2^2,\quad n\innatSeq{\nTimestepsFine},
\end{equation}
or equivalently
\begin{equation} \label{eq:lspggnatReduced}
  \redsolapproxArg{n} = \underset{\solRedDummyOpt\in\RR{\nbasisspace}}{\arg\min}\left \|\weightmat 
\resn(\solref + \basismatspace\solRedDummyOpt,\solapproxArg{n-1},\ldots,\solapproxArg{n-\kn };\param)	
	\right \|_2^2,\quad n\innatSeq{\nTimestepsFine},
\end{equation}
where
$\redsolapproxFuncOnlyParam\equiv[\redsolapproxEntryFuncOnlyParam{1}\ \cdots\
\redsolapproxEntryFuncOnlyParam{\nbasisspace}]^T\in\RR{\nbasisspace}\otimes\timeDiscreteFuncSpace$.
Here,
 $\weightmat\in \RR{\weightmatRows\times\ndof}$ is a weighting matrix, where
 $\nbasisspace\leq\weightmatRows(\leq\ndof)$ is necessary for the residual
 Jacobian in the nonlinear least-squares problem 
 \eqref{eq:lspggnat}--\eqref{eq:lspggnatReduced}
 to be 
 nonsingular.
 Examples of weighting matrices include 
 $\weightmat = \identity{\ndof}$ in the case of unweighted LSPG, where
 $\identity{p}$ denotes the $p\times p$ identity matrix; however, this choice precludes
 computational-cost savings, as all $\ndof$ elements of the residual vector
 $\resn$ (as well as its Jacobian) must be computed during
 each iteration when solving nonlinear least-squares problem
 \eqref{eq:lspggnat}--\eqref{eq:lspggnatReduced}. To reduce the
 computational complexity,
 a `hyper-reduction' approach is required that necessitates computing only a subset of residual
 elements. Particular weighting-matrix choices that lead to
 hyper-reduction include $\weightmat =
 \samplemat\in\{0,1\}^{\nressample\times\nspacedof}$, where
 $\nressample\leq\nspacedof$ and $\samplemat$
 comprises selected rows of the identity matrix $\identity{\ndof}$ in the case of collocation
 \cite{LeGresleyThesis}; and 
$\weightmat = (\samplemat \basismatres)^{+}\samplemat\in\RR{\nbasisres\times\nspacedof}$,
 where $\basismatres\in\orthomat{\nspacedof}{\nbasisres}$ 
 denotes a 
 basis for the residual and a superscript $+$ denotes the Moore--Penrose
 pseudoinverse in the case of GNAT \cite{carlbergJCP}.
 Here, 
 $\nbasisspace\leq\nbasisres\leq\nressample(\leq\ndof)$
 is necessary for the residual Jacobian to be nonsingular, 
and
 $\orthomat{n}{k}$ denotes
 the Stiefel manifold: 
 the set of orthogonal $k$-frames in $\RR{n}$. 

\section{Space--time least-squares Petrov--Galerkin method}\label{sec:spaceTimeLSPG}
We now derive the proposed space--time least-squares Petrov--Galerkin
(\methodAcronym)
projection method. 
We begin by specifying the space--time trial subspace in Section
\ref{sec:STLSPGsubspace}, 
followed by a description of the space--time least-squares Petrov--Galerkin
projection process in Section
\ref{sec:STLSPGprojection}. Then, Section \ref{sec:spacetimehyper} describes
choices for the weighting matrix to enable hyper-reduction.

\subsection{Space--time trial subspace}\label{sec:STLSPGsubspace}
To reduce both the spatial and temporal dimensions of the FOM, in
analogy to
Eq.~\eqref{eq:LSPGcontinuous} 
we enforce the approximated numerical solution 
$\solapproxSTFunc$
to reside in \reviewerA{an affine} `space--time trial subspace'
\begin{equation}\label{eq:spacetimeLSPGcontinuous}
\solapproxSTFuncOnlyParam\in
\spacetimesubspaceFunc
\subseteq\spatialSpace\otimes\timeDiscreteFuncSpace
%\subseteq\underbrace{\solref\otimes
%\onesFunc+\spatialSubspace\otimes
%\timeDiscreteFuncSpace}_\text{spatial 
%trial
%subspace}
\end{equation}
where 
$\spacetimesubspaceFunc\defeq\solref\otimes
\onesFunc+\Span{\stbasisvecFunc{i}}_{i=1}^\nbasisst\subseteq\spatialSpace\otimes\timeDiscreteFuncSpace$
with
$\dim(\spacetimesubspaceFunc)=\nbasisst\ll\nspacedof\ntimedof$, and we
enforce
$\stbasisvecFunc{i}(0) =\zerobold$, $i\innat{\nbasisst}$ such that
$\solapproxSTFunc(0;\param)=\solref$.

Thus, at a given time instance, the space--time LSPG method approximates the numerical
solution at each time instance
$n\innat{\ntimedof}$ as 
$
\solFuncArg{n}\approx\solapproxSTFuncArg{n}\in\spacetimesubspaceFunc
$
or equivalently
\begin{align}\label{eq:spacetimeExpansion}
\begin{split}
\solapproxSTFuncArg{n}&= \solref +
\sum_{i=1}^{\nbasisst}
\stbasisvecFunc{i}(t^n)\redsolapproxSTArg{i} 
\end{split}
\end{align}
%\begin{align}\label{eq:spacetimeExpansion}
%\begin{split}
%\solapproxSTFuncArg{n}&= \solref +
%\sum_{i=1}^{\nbasisspace}\sum_{j=1}^{\nbasistime^i}
%\basisvecspacei{i}
%\basisvectimeFuncij{i}{j}(t^n)\redsolapproxSTArg{i}{j} 
%%=\solref +
%%\sum_{i=1}^{\nbasisspace}
%%(\basisvectimeFuncVeci{i}(t^n)\otimes
%%\basisvecspacei{i})
%%\redsolapproxSTVecArg{i},
%\end{split}
%\end{align}
where 
$\redsolapproxSTArg{i}\in\RR{}$, $i\innat{\nbasisst}$
denotes the generalized
coordinate of the \methodAcronym\ solution. Comparing
Eqs.~\eqref{eq:spatialLSPGsolution} and \eqref{eq:spacetimeExpansion} reveals
that the space--time trial subspace enables time dependence of the
approximated solution to be moved from the generalized coordinates to the
basis vectors; this enables fewer generalized coordinates to be computed in
order to characterize the complete space--time solution.

Introducing 
$\stbasisvec{i}\defeq
\unrollfunc{\stbasisvecFunc{i}}\in\RR{\ndof}\otimes\timeDiscreteSpace$
and
$\spacetimesubspace\defeq\solref\otimes
\ones_{\ntimedof}+\Span{\stbasisvec{i}}_{i=1}^{\nbasisst}$,
%$\basisvectimeij{i}{j}\defeq
%\unrollfunc{\basisvectimeFuncij{i}{j}}\in\timeDiscreteSpace$ and 
%$\temporalSubspaceArg{i} :=
%\Span{\basisvectimeij{i}{j}}_{j=1}^{\nbasistime^i} \subseteq \timeDiscreteSpace$,
%and $\nbasistime^i\leq \ntimedof$	for $i\innat{\nbasisspace}$, 
we can also
write the space--time trial subspace in discrete form as
%define 
\begin{equation} \label{eq:spacetimeLSPGdiscrete}
%\solapproxSTFullDiscreteOnlyParam 
%\defeq 
\unrollfunc{\solapproxSTFuncOnlyParam}
%=\left[\solapproxSTArg{1}\ \cdots\
%\solapproxSTArg{\nTimestepsFine}\right]
\in
\spacetimesubspace
%\subseteq 
%\solref \otimes \ones_{\ntimedof} + \spatialSubspace \otimes
	 %\timeDiscreteSpace 
	 \subseteq \RR{\nspacedof} \otimes \timeDiscreteSpace.
\end{equation} 
\begin{remark}[Space--time LSPG projection reduces the temporal dimension]\label{rem:stlspgreducedim}
Comparing \eqref{eq:FOMdiscrete} and \eqref{eq:spacetimeLSPGdiscrete}
reveals that the proposed space--time LSPG trial subspace reduces the spatiotemporal dimension of the
problem from $\dim(\RR{\nspacedof} \otimes \timeDiscreteSpace)=\ndof\ntimedof$ to
$\dim(\spacetimesubspace) = \nbasisst$. Because the spatiotemporal dimension
$\nbasisst$ can be chosen to be independent
of both the spatial and temporal dimensions $\nspacedof$ and $\ntimedof$,
respectively, the proposed method can reduce both the spatial and temporal
dimensions of the full-order model. 
\end{remark}
\begin{remark}[Space--time trial subspace \reviewerBRtwo{can remove} spurious temporal
modes]\label{rem:spurious}
Restricting the \methodAcronym\ state to lie in the space--time trial subspace
$\solref\otimes \onesFunc+ \spacetimesubspaceFunc$
\reviewerB{\reviewerBRtwo{can enable} spurious temporal modes (e.g., spurious
time growth or dissipation) to be removed} from the set of possible solutions.
\reviewerB{\reviewerBRtwo{More precisely}, if the subspace
	$\spacetimesubspaceFunc$ is computed from training data as will be described
	in Section \ref{sec:spacetimetrialconstruct}, then this subspace will
	contain only temporal modes that have been observed during the training
simulations.
%; the \methodAcronym\ solution will thus be prevented from
%exhibiting previously unobserved temporal behavior.
}
\end{remark}
\subsection{Space--time least-squares Petrov--Galerkin
projection}\label{sec:STLSPGprojection}
To derive the \methodAcronym\ projection, we begin by defining
\begin{align}
\begin{split}
\res&:(\timeDummy^n;\solDummy;\paramDummy)\mapsto
\resArg{n}(\solDummyFuncArgDummy{n},\ldots,\solDummyFuncArgDummy{n-\knDummy };\paramDummy)\\
&:\timestepset\times\RR{\ndof}\otimes \timeDiscreteFuncSpace\times
\paramDomain\rightarrow \RR{\ndof}
\end{split}\\
\begin{split}
\resRed&:(\timeDummy^n;\solRedDummy;\paramDummy)\mapsto
\res(\timeDummy^n;
\solref +
\sum_{i=1}^{\nbasisst}
\stbasisvecFunc{i}(\cdot)
\solRedDummyScalarArg{i};\paramDummy)\\
&:\timestepset\times\RR{\nbasisst}\times
\paramDomain\rightarrow \RR{\ndof}.
\end{split}
\end{align}
Note that 
\reviewerBRtwo{the space--time residuals}
$\res(\cdot;\solDummy;\paramDummy),\resRed(\cdot;\solRedDummy;\paramDummy)
\in\RR{\ndof}\otimes\timeDiscreteFuncSpace $ \reviewerBRtwo{are defined
from the O$\Delta$E residual $\resArg{n}$; as such, they are defined
directly from the integrator used to perform time discretization for the FOM}.
We now introduce a vectorization function 
 \begin{align}\label{eq:vectorize}
 \begin{split}
 \vectorizeNone&:{\stateContinuousDummyEntryNo} \mapsto
 \text{vec}(\unrollfunc{\stateContinuousDummyEntryNo})\\
 &:\RR{p}\otimes\timeDiscreteFuncSpace\rightarrow\RR{p\ntimedof},
 \end{split}
 \end{align}
 and define the vectorized residual as a function of the full state
\begin{align}
\begin{split}
\resst&:(\solDummy;\param) \mapsto
\vectorize{\res(\cdot;\solDummy;\param)}\\
&:\RR{\ndof}\otimes\timeDiscreteFuncSpace
\times
\paramDomain\rightarrow\RR{\ndof\ntimedof}
\end{split}
\end{align}
and as a function of the generalized coordinates
\begin{align}
\begin{split}
\resRedst&:(\solRedDummy;\paramDummy) \mapsto
\vectorize{\resRed(\cdot;\solRedDummy;\paramDummy)}\\
&:\RR{\nbasisst}\times\paramDomain\rightarrow\RR{\ndof\ntimedof}.
\end{split}
\end{align}
 We define the inner product
$(\stateContinuousDummyEntryNo,\stateTwoContinuousDummyEntryNo)_\metric\defeq
\vectorize{\stateTwoContinuousDummyEntryNo}^T\metric\vectorize{\stateContinuousDummyEntryNo
}
$ and associated norm $\|\stateContinuousDummyEntryNo\|_\metric =
\sqrt{(\stateContinuousDummyEntryNo,\stateContinuousDummyEntryNo)_\metric}$
for
	$\stateContinuousDummyEntryNo,\stateTwoContinuousDummyEntryNo\in\RR{p}\otimes\timeDiscreteFuncSpace$
	and $\metric\in\SPSD{p\ntimedof}$, where $\SPSD{p}$ denotes the set of $p\times p$
	symmetric positive semidefinite matrices; we also define
the inner product
$(\stateContinuousDummyEntryNo,\stateTwoContinuousDummyEntryNo)_2\defeq\vectorize{\stateTwoContinuousDummyEntryNo}^T\vectorize{\stateContinuousDummyEntryNo
} $ and the associated norm
$\|\stateContinuousDummyEntryNo\|_2 =
\sqrt{(\stateContinuousDummyEntryNo,\stateContinuousDummyEntryNo)_2}$.\footnote{The
metric will be rank deficient if we employ hyper-reduction as described in
Section \ref{sec:spacetimehyper}.}
Now, we propose computing the \methodAcronym\ solution by
 minimizing the residual in a weighted $\ell^2$-norm as
\begin{align} \label{eq:t-lspgReducedLargerFull}
\begin{split}
 \solapproxSTFuncOnlyParam =
	\underset{
\solDummyOpt\in\spacetimesubspace
		}{\arg\min}
    \left \|
		\res(\cdot;
\solDummyOpt
;\param)
		\right \|_{\weightmatst^T\weightmatst}^2
 =
	\underset{
\solDummyOpt\in\spacetimesubspace
		}{\arg\min}
    \left \|\weightmatst
		\resst(
\solDummyOpt;\param)
		\right \|_2^2
		,
\end{split}
\end{align}
where $\weightmatst\in\RR{\weightmatstRows\times\ndof\ntimedof}$ 
is a space--time weighting matrix and
$\nbasisst\leq\weightmatstRows(\leq\ndof\ntimedof)$ is necessary for the
residual Jacobian in the nonlinear least-squares problem
\eqref{eq:t-lspgReducedLargerFull} to be nonsingular.
We can also write
Problem \eqref{eq:t-lspgReducedLargerFull} in terms of the generalized
coordinates as
%\begin{equation} \label{eq:t-lspgReducedLarger}
% \redsolapproxST(\param) =
%	\underset{(\solDummyScalar_{ij})}{\arg\min}
%    \left \|\weightmatst
%		\resst(
%\solref +
%\sum_{i=1}^{\nbasisspace}\sum_{j=1}^{\nbasistime^i}
%\basisvecspacei{i}
%\basisvectimeFuncij{i}{j}(t^0)\solDummyScalar_{ij},
%		\ldots,
%\solref +
%\sum_{i=1}^{\nbasisspace}\sum_{j=1}^{\nbasistime^i}
%\basisvecspacei{i}
%\basisvectimeFuncij{i}{j}(t^\ntimedof)\solDummyScalar_{ij};\param)
%		\right \|_2^2,
%\end{equation}
%
%Further, defining enables Problem \eqref{eq:t-lspgReducedLarger} to be simplified to 
\begin{align} \label{eq:t-lspgReducedLargerSimplify}
\begin{split}
 \redsolapproxST(\param) &=
	\underset{\solRedDummyOpt\in\RR{\nbasisst}}{\arg\min}
    \left \|\resRed(\cdot;\solRedDummyOpt;\param)
		\right \|_{\weightmatst^T\weightmatst}^2
	=\underset{\solRedDummyOpt\in\RR{\nbasisst}}{\arg\min}
    \left \|\weightmatst
\resRedst(\solRedDummyOpt;\param)
		\right \|_2^2,
\end{split}
\end{align}
where 
$\redsolapproxST\equiv[ \redsolapproxSTArg{1}\ \cdots\
\redsolapproxSTArg{\nbasisst} ]^T\in\RR{\nbasisst}$ and
Eq.~\eqref{eq:spacetimeExpansion} relates $\redsolapproxST$ to
$\solapproxSTFunc$.

Necessary first-order optimality conditions for Problem
\eqref{eq:t-lspgReducedLargerSimplify} correspond to stationarity of the
objective function, i.e., the solution $\redsolapproxST(\param)$ satisfies
\begin{equation} \label{eq:lspgCondition}
(\resRed(\cdot;\redsolapproxST(\param);\param),
\frac{\partial \resRed}{\partial\solRedDummyScalarArg{i}}(\cdot;\redsolapproxST(\param);\param)
)_{\weightmatst^T\weightmatst} = 0,\quad i\innat{\nbasisst},
\end{equation} 
where
 \begin{align} 
 \begin{split} 
\frac{\partial \resRed}{\partial\solRedDummyScalarArg{i}}
(t^n;\redsolapproxST;\param)
&= \frac{\partial \res}{\partial\solDummy}
\Bigl(t^n;
\solref +
\sum_{\ell=1}^{\nbasisst}
\stbasisvecFunc{\ell}
(\cdot)\redsolapproxSTArgNone{\ell};\param\Bigr)
\stbasisvecFunc{i}(\cdot)\\
&=
\sum_{j=0}^{\kn}\frac{\partial \resn}{\partial \solDummy^j}\Bigl(
\solref +
\sum_{\ell=1}^{\nbasisst}
\stbasisvecFunc{\ell}(t^n)\redsolapproxSTArg{\ell} 
,\ldots,
\solref +
\sum_{\ell=1}^{\nbasisst}
\stbasisvecFunc{\ell}(t^{n-\kn})\redsolapproxSTArg{\ell} 
;\param\Bigr)
\stbasisvecFunc{i}(t^{n-j})
\\
&=
\sum_{j=0}^{\kn}\left[
\alpha_j^n
-\dtArg{n} \beta_j^n\frac{\partial\flux}{\partial\solDummy}\Bigl(
\solref +
\sum_{\ell=1}^{\nbasisst}
\stbasisvecFunc{\ell}(t^{n-j})\redsolapproxSTArg{\ell}
,t^{n-j};\param\Bigr)\right]\stbasisvecFunc{i}(t^{n-j})
 \end{split} 
 \end{align} 
 denotes the elements of the test basis.
We refer to this approach
as a space--time least-squares Petrov--Galerkin (ST-LSPG) projection
because Eq.~\eqref{eq:lspgCondition} corresponds to a Petrov--Galerkin
projection with test basis $\{\frac{\partial \resRed}{\partial\solRedDummyScalarArg{i}}
(\cdot;\redsolapproxST;\param)
\}_{i\innat{\nbasisst}}$ and also satisfies
necessary conditions for the nonlinear least-squares problem
\eqref{eq:t-lspgReducedLargerFull}--\eqref{eq:t-lspgReducedLargerSimplify}.

As Problem
\eqref{eq:t-lspgReducedLargerFull}--\eqref{eq:t-lspgReducedLargerSimplify} is
simply a nonlinear least-squares
problem with $\weightmatstRows$ equations in $\nbasisst
(\leq \weightmatstRows)$ unknowns, we can solve it with the Gauss--Newton method, which leads to the following
sequence of iterates for $k=0,\ldots,\kmax(\param)-1$ given an initial guess $\redsolapproxSTIt{0}$:
\begin{align}\label{eqref:STGaussNewtonOne}
\left(\frac{\partial \resRed}{\partial\solRedDummy}(\cdot;\redsolapproxSTIt{k};\param)\deltaredsolapproxSTIt{k},
\frac{\partial \resRed}{\partial\solRedDummy}(\cdot;\redsolapproxSTIt{k};\param)
\right)_{\weightmatst^T\weightmatst}
&=
-\left(\resRed(\cdot;\redsolapproxSTIt{k};\param),
\frac{\partial \resRed}{\partial\solRedDummy}(\cdot;\redsolapproxSTIt{k};\param)
\right)_{\weightmatst^T\weightmatst} \\
\label{eqref:STGaussNewtonTwo}
\redsolapproxSTIt{k+1}&=\redsolapproxSTIt{k}+\lineSearchParamArg{k}\deltaredsolapproxSTIt{k},
\end{align}
where we set
$
\redsolapproxST(\param) =\redsolapproxSTIt{\kmax(\param)}
$
at convergence 
 and $\lineSearchParamArg{k}\in\RR{}$ denotes a step length that can be computed to
ensure global convergence (e.g., satisfy the strong Wolfe conditions).
Again, we note that the condition $\nbasisst\leq\weightmatstRows$ is necessary for
singular values of the Jacobian
$
\weightmatst\frac{\partial \resRed}{\partial\solRedDummy}(\cdot;\redsolapproxSTIt{k};\param)\in\RR{\weightmatstRows\times\nbasisst}
$
to be uniformly bounded away from zero in the region of interest; this is one
of the sufficient conditions required to prove convergence of the
Gauss--Newton method (see, e.g., \cite[Theorem 10.1]{NocedalWright}).

\begin{remark}[Simplification for block-diagonal weighting matrices]
	%\KTC{Fix residual arguments here!}
If the weighting matrix is block-diagonal, i.e.,
$\weightmatst=\diag{\weightmatstArg{n}}$ with
$\weightmatstArg{n}\in\RR{\weightmatstRowsn\times\ndof}$,
$\weightmatstRowsn\leq\ndof$, and $\weightmatstRows =
\sum_{n=1}^\ntimedof\weightmatstRowsn$, then Problems
\eqref{eq:t-lspgReducedLargerFull} and \eqref{eq:t-lspgReducedLargerSimplify}
simplify
to
\begin{equation} \label{eq:t-lspgReducedFull}
 \solapproxSTFuncOnlyParam =
	\underset{
\solDummy\in\spacetimesubspace
		}{\arg\min}
    \sum_{n=1}^\ntimedof\left \|\weightmatstArg{n}
		\resn(
\solDummy
;\param)
		\right \|_2^2\quad\text{and}\quad
 \redsolapproxST(\param) =
	\underset{\solRedDummyOpt\in\RR{\nbasisst}}{\arg\min}
    \sum_{n=1}^{\ntimedof}\left \|\weightmatstArg{n}
\resRed(t^n;\solRedDummyOpt;\param)
		\right \|_2^2,
\end{equation}
respectively.
%\begin{equation} \label{eq:t-lspgReduced}
% \redsolapproxST(\param) =
%	\underset{(\solDummyScalar_{ij})}{\arg\min}
%    \sum_{n=1}^{\ntimedof}\left \|\weightmatst^n
%		\resn(
%\solref +
%\sum_{i=1}^{\nbasisspace}\sum_{j=1}^{\nbasistime^i}
%\basisvecspacei{i}
%\basisvectimeFuncij{i}{j}(t^n)\solDummyScalar_{ij},
%		\ldots,
%		\solref +
%\sum_{i=1}^{\nbasisspace}\sum_{j=1}^{\nbasistime^i}
%\basisvecspacei{i}
%\basisvectimeFuncij{i}{j}(t^{n-\kn })\solDummyScalar_{ij};\param)
%		\right \|_2^2.
%\end{equation}
\end{remark}

\begin{remark}[Space--time Galerkin projection]
Using the present formalism, we can also derive a (discrete) space--time
Galerkin projection by enforcing Galerkin orthogonality rather than the
Petrov--Galerkin orthogonality in Eq.~\eqref{eq:lspgCondition}. In particular,
this space--time Galerkin method computes the solution
$\redsolapproxST_\text{G}(\param)$ satisfying \begin{equation}
\label{eq:galerkinCondition} (\resRed(\cdot;\redsolapproxST_\text{G};\param),
\stbasisvecFunc{i}(\cdot))_{\metric} = 0,\quad
i\innat{\nbasisst}\end{equation} for some
prescribed metric $\metric\in\SPSD{\ndof\ntimedof}$. However, because the
Galerkin solution $\redsolapproxST_\text{G}(\param)$ does not associate with
the solution to any optimization problem in general, we do not pursue this
method further. We note that this approach is the discrete
counterpart to the continuous Galerkin projection proposed in
Refs.~\cite{volkwein2006algorithm,baumann2016space}; however, these
contributions effectively employ $\metric=\identity{\ndof\ntimedof}$ and thus provide no
mechanism for hyper-reduction as we do in Section \ref{sec:spacetimehyper}.
\end{remark}

\subsection{Weighting matrix and hyper-reduction}\label{sec:spacetimehyper}

Hyper-reduction refers to reducing the computational complexity of
nonlinear ROMs by introducing approximations of the
nonlinear functions. This is typically achieved via collocation (wherein the
nonlinear functions are simply sampled)
\cite{astrid2007mpe,ryckelynck2005phm,LeGresleyThesis} or function-reconstruction
approaches (e.g., gappy POD \cite{sirovichOrigGappy}, empirical interpolation
\cite{barrault2004eim,chaturantabut2010journal}), wherein the nonlinear
function is approximated from a sample of its entries via interpolation or
least-squares regression
\cite{astrid2007mpe,bos2004als,chaturantabut2010journal,galbally2009non,drohmannEOI,CarlbergGappy,carlbergJCP}. In the case of 
(spatial-projection-based) LSPG, it has been shown  that
hyper-reduction can be realized by particular choices of the weighting matrix \cite{carlbergGalDiscOpt}.
We now propose several choices for the space--time weighting matrix
$\weightmatst$, some of which will lead to hyper-reduction for the
\methodAcronym\ method. 
\reviewerBRtwo{
All of these hyper-reduction methods will ensure that the computational cost of
	the \methodAcronym\ method is independent
	of both the spatial and temporal dimensions characterizing the FOM
	if the Jacobian of the space--time residual is sparse, i.e., the
	number of nonzeros in 
$
\vectorize{
	\frac{\partial\resEntryi{i}}{\partial \solDummy}(\timeArg{n};\solDummyOpt;\param)
}\in\RR{\nspacedof\ntimedof}
$, $i\innat{\nspacedof}$, $n\innat{\ntimedof}$
	is `small' and is independent of both the spatial and temporal dimensions
$\nspacedof$ and $\ntimedof$, respectively. This is sometimes referred to as
the $H$-independence condition \cite{drohmannEOI} and is inherited directly
from $H$-independence of the spatial residual $\resn$, as 
$
\nnz{\vectorize{
	\frac{\partial\resEntryi{i}}{\partial \solDummy}(\timeArg{n};\solDummyOpt;\param)
}} = 
\sum_{j=0}^{\knArg{n}}\nnz{
	\frac{\partial \resArgEntry{i}{n}}{\partial
\solDummy^{j}}(\solDummyOpt^{n},\ldots,\solDummyOpt^{n-\knArg{n}};\param)}
$, where $\nnzNo$ denotes the number of nonzeros of its argument.
}

\subsubsection{Unweighted LSPG}\label{sec:unweightedLSPG}

The most obvious choice for the weighting matrix is $\weightmatst =
\identity{\ndof\ntimedof}$; this choice simply minimizes the sum of squares of
elements of the residual over both space and time and leads to
$\weightmatstRows=\ndof\ntimedof$
with 
$\weightmatstArg{n}=\identity{\ndof}$, $n\innat{\ntimedof}$ in Problem
\eqref{eq:t-lspgReducedFull}. This is analogous
to unweighted LSPG in the spatial-projection case, in which case
$\weightmat=\identity{\ndof}$ in Problem \eqref{eq:lspggnat}--\eqref{eq:lspggnatReduced}. 

However, because this approach requires evaluating all $\ndof\ntimedof$ of the
space--time residual in order to compute the objective function, it precludes
significant computational-cost savings. As was also pointed out in
Ref.~\cite{baumann2016space}, this bottleneck is especially
cumbersome for space--time ROM approaches. For this reason, alternative choices for the
weighting matrix $\weightmatst$ can be employed that lead to a ROM whose
computational complexity is independent
of the full spatiotemporal dimension $\ndof\ntimedof$.  We now describe
different choices for the weighting matrix $\weightmatst$ that lead to such
hyper-reduction.

\subsubsection{Space--time collocation}\label{sec:spacetimecoll}
 We can extend collocation hyper-reduction to the \methodAcronym\ context by employing
 weighting matrix $\weightmatst = \samplematst$ with
\begin{equation}\label{eq:samplematst}
 \samplematst
 \defeq\left[\unitvecArg{\spacetimeindexIncludeAll{1}}\ \cdots\
 \unitvecArg{\spacetimeindexIncludeAll{\nressamplest}}\right]^T
 \in\{0,1\}^{\nressamplest\times\nspacedof\ntimedof},
 \end{equation}
 where
 $\spacetimeindexNo:(i,j)\mapsto i + \ndof(j-1)$, and $\unitvecArg{i}$ denotes the $i$th canonical unit vector. Here,
 $\spacetimesampleset\defeq\{(\spaceindex{i},\timeindex{i})\}_{i\innat\nressamplest}\subseteq\nat{\nspacedof}\times\nat{\ntimedof}$
 %$\spaceindexNo:\nat{\nressamplest}\rightarrow\nat{\ndof}$ and
 %$\timeindexNo:\nat{\nressamplest}\rightarrow\nat{\ntimedof}$ 
 denotes the set
 of space--time sample indices. Again, we require 
 $\nbasisst\leq\nressamplest(\leq\nspacedof\ntimedof)$ to ensure nonsingular
 residual Jacobians. 
 
 Critically, note that applying $\weightmatst = \samplematst$ in Problem
 \eqref{eq:t-lspgReducedLargerFull}--\eqref{eq:t-lspgReducedLargerSimplify} leads to hyper-reduction, as evaluating the
 objective function requires evaluating only $\nressamplest <
 \ndof\ntimedof$ elements of the spatiotemporal residual. In practice, this
 implies that the residual will be evaluated only at a subset of time
 instances and spatial degrees of freedom. Further, this can also lead to a positive
 semidefinite metric $\metric = \weightmatst^T\weightmatst$, as
 $\rank(\weightmatst^T\weightmatst) = \nressamplest\leq\ndof\ntimedof$ in this
 case.

\subsubsection{Space--time GNAT}\label{sec:spacetimegnat}

 Similarly, we can extend gappy POD hyper-reduction to \methodAcronym\ by employing a
 weighting matrix $\weightmatst =
 (\samplematst\basismatrest)^+\samplematst\in\RR{\nbasisresst\times\nspacedof\ntimedof}$, where $\basismatrest
 \in \orthomat{\nspacedof\ntimedof }{ \nbasisresst}$ denotes a basis (in
 matrix form) for the spatiotemporal residual. Again, we require
 $\nbasisst\leq\nbasisresst\leq\nressamplest (\leq\nspacedof\ntimedof)$ for nonsingular
 residual Jacobians.
 When 
 this weighting matrix is employed, Problem
 \eqref{eq:t-lspgReducedLargerFull}--\eqref{eq:t-lspgReducedLargerSimplify} is equivalent to minimizing the
 $\ell^2$-norm of the gappy POD-approximated residual, i.e.,  Problem
 \eqref{eq:t-lspgReducedLargerFull}--\eqref{eq:t-lspgReducedLargerSimplify} with $\weightmatst =
 (\samplematst\basismatrest)^+\samplematst\in\RR{\nbasisresst\times\nspacedof\ntimedof}$
 is equivalent to
\begin{equation} \label{eq:t-lspgReducedLargerSimplifyGappy}
\solapproxSTFuncOnlyParam =
	\underset{
\solDummyOpt\in\spacetimesubspace
		}{\arg\min}
    \left \|
		\resstApprox(\cdot;
\solDummyOpt;\param)
		\right \|_2^2
\quad\text{and}\quad
 \redsolapproxST(\param) =
\underset{\solRedDummyOpt\in\RR{\nbasisst}}{\arg\min}
    \left \|\resRedApprox(\cdot;\solRedDummyOpt;\param)
		\right \|_{2}^2,
		%=
	%\underset{\solRedDummyOpt\in\RR{\sum_{i=1}^\nbasisspace \nbasistime^i}}{\arg\min}
    %\left \|
%\resRedstapprox(\solRedDummyOpt;\param)
		%\right \|_2^2,
\end{equation}
where we have defined the gappy POD residual approximations
as 
\begin{align}
\begin{split}
\vectorize{\resstApprox(\cdot;\solDummyOpt;\param)}
&=
\underset{\bar\solDummyOpt\in\range{\basismatrest}}{\arg\min}\|\samplematst\bar\solDummyOpt
- \samplematst\resst(\solDummyOpt;\param)\|_2^2 =
\basismatrest(\samplematst\basismatrest)^+\samplematst\resst(\solDummyOpt;\param)\\
\vectorize{\resRedApprox(\cdot;\solRedDummyOpt;\param)}
&=
\underset{\bar\solDummyOpt\in\range{\basismatrest}}{\arg\min}\|\samplematst\bar\solDummyOpt
- \samplematst\resRedst(\solRedDummyOpt;\param)\|_2^2 =
\basismatrest(\samplematst\basismatrest)^+\samplematst\resRedst(\solRedDummyOpt;\param).
\end{split}
\end{align}
  
As with space--time collocation, space--time gappy POD leads to
hyper-reduction, as evaluating the objective function requires evaluating only
$\nressamplest \leq \ndof\ntimedof$ elements of the spatiotemporal residual.
It can also lead to a positive
 semidefinite metric $\metric = \weightmatst^T\weightmatst$, as
 $\rank(\weightmatst^T\weightmatst) = \nbasisresst\leq\ndof\ntimedof$ in this
 case.
Due to its close relationship to the original GNAT method---in
which case
$\weightmat = (\samplemat \basismatres)^{+}\samplemat\in\RR{\nbasisres\times\nspacedof}$
in Problem \eqref{eq:lspggnat}--\eqref{eq:lspggnatReduced}, we refer to this
approach as the space--time GNAT (ST-GNAT) method.

\section{Computing method ingredients}\label{sec:ingredients}

This section describes particular methods for constructing the ingredients
required for the \methodAcronym\ method, namely the space--time trial subspace
$\spacetimesubspaceFunc$; the sampling matrix
$\samplematst$ in the case of hyper-reduction; and the residual basis
$\basismatrest$ in the case of ST-GNAT.

\subsection{Space--time trial subspace}\label{sec:spacetimetrialconstruct}
We first assume that a set of training parameter instances
$\paramDomainTrain \defeq \{\paramTrain{1}, \ldots,
\paramTrain{\ntrain}\}\subset\paramDomain$
has been defined (by, e.g., uniform sampling, Latin-hypercube sampling, greedy
sampling) for which the FOM \eqref{eq:fom} has been solved numerically using a
linear multistep method \eqref{eq:LMMresidual} to obtain state `snapshots'
$\solFuncOnlyParamTrain{i}\in\RR{\nspacedof}$, $i\innat{\ntrain}$. This data-collection process is
referred to as the `offline' stage in model reduction. 

Previous work on space--time model reduction constructed the space--time trial
subspace simply as the span of these snapshots, i.e.,
\begin{equation}
\spacetimesubspaceFunc=
\solref\otimes
\onesFunc+
\Span{\solFuncOnlyParamTrain{i}-\solinit(\paramTrain{i})}_{i=1}^{\ntrain}
\subseteq\spatialSpace\otimes\timeDiscreteFuncSpace
\end{equation}
such that
$\stbasisvecFunc{i} =
\solFuncOnlyParamTrain{i}-\solinit(\paramTrain{i})\in\RR{\nspacedof}\otimes\timeDiscreteFuncSpace$,
$i\innat{\ntrain}$
\cite{urban2012new,urban2014improved,yanoPateraUrban,yanoReview,constantineResMin}.
Unfortunately, because this approach extracts only a single space--time basis
vector from each training simulation, it limits the dimension of the
space--time ROM to $\nbasisst = \ntrain$. In practical contexts where a single
training simulation may incur significant computational costs, this can
significantly limit the dimensionality (and resulting accuracy) of the
space--time ROM. Further, storage costs can be significant in
this case, as the basis requires $\nbasisst\nspacedof\ntimedof$ storage.

To overcome these shortcomings, we propose to compute the space--time trial
subspace by applying tensor-decomposition techniques to the
three-way `state
 tensor' $\tensor
 \in\RR{\nspacedof\times\ntimedof\times\ntrain}$ with
 elements 
\begin{equation}\label{eq:tensor}
\tensor_{ijk}\defeq
\solEntry{i}(t^j;\paramTrain{k})-\solinitEntry{i}(\paramTrain{k}),\quad
i\innat{\nspacedof},\ j\innat{\ntimedof},\ k\innat{\ntrain}.
\end{equation}
The resulting 
space--time trial
subspace comprises the direct sum of Kronecker products of spatial and
temporal subspaces, i.e.,
\begin{equation}
\spacetimesubspace=
\solref\otimes
\onesFunc+
\oplus_{i=1}^{\nbasisspace}
\spatialSubspacei{i}\otimes\temporalSubspaceFuncArg{i},
\end{equation}
where
$ \spatialSubspacei{i} \defeq \Span{\basisvecspacei{i}} \subseteq
\RR{\nspacedof}$, $\temporalSubspaceFuncArg{i} \defeq
\Span{\basisvectimeFuncij{i}{j}}_{j=1}^{\nbasistime^i} \subseteq \timeDiscreteFuncSpace$,
and $\nbasistime^i\leq \ntimedof$	(hopefully with $\nbasistime^i\ll \ntimedof$) for $i\innat{\nbasisspace}$. Here,
$\basisvectimeFuncij{i}{j} \in \timeDiscreteFuncSpace$ with
$\basisvectimeFuncij{i}{j}(0)=0$
denotes the $j$th temporal basis
vector associated with the temporal behavior 
 of $i$th spatial basis vector $\basisvecspacei{i}$.
 Thus, in this case we have
 $\stbasisvecFunc{\mapping{i}{j}} = 
 \basisvecspacei{i}\basisvectimeFuncij{i}{j}\in\RR{\nspacedof}\otimes\timeDiscreteFuncSpace$, $i\innat{\nbasisspace}$, $j\innat{\nbasistime^i}$ and $\nbasisst =
 \sum_{i=1}^{\nbasisspace}\nbasistime^i$,
where
$\mappingNo:(i,j)\mapsto \sum_{k=1}^{i-1}\nbasistime^k + j $ 
provides a mapping from the spatial-basis and temporal-basis indices to a
space--time basis index.
This approach enables a larger space--time ROM dimension, as 
$\nbasisst =
 \sum_{i=1}^{\nbasisspace}\nbasistime^i>\ntrain$ is possible; further, this
 approach requires only 
 $\nbasisspace\nspacedof + \nbasisst\ntimedof$ storage at most. We expect the
resulting \methodAcronym\ ROM be accurate if the solution exhibits separable
behavior in space and time.

The mode-1 and mode-2 unfolding of $\tensor$ can be written as
\begin{align}
&\tensorUnfold{1}= \bmat{\snapshots(\paramTrain{1}) &
\ldots & \snapshots(\paramTrain{\ntrain})} \in
\RR{\nspacedof\times\ntimedof\ntrain}\\
  &\tensorUnfold{2}=\bmat{\snapshots^T(\paramTrain{1}) &
	\ldots & \snapshots^T(\paramTrain{\ntrain})} \in
	\RR{\ntimedof\times\nspacedof\ntrain},
\end{align} 
respectively, where we have defined
   $\snapshots(\param) \defeq
	 \unrollfunc{\solFuncOnlyParam-\solinit(\param)}$.
In the model-reduction literature, the matrix $\snapshotmatspace$ is typically referred to as the
`global snapshot matrix'; we refer to \reviewerB{it in} this work as the `spatial snapshot
matrix', as its columns comprise snapshots of the spatial solution over time
and parameter variation. Similarly, we refer to $\snapshotmattime$ as the
`temporal snapshot matrix', as its columns comprise snapshots of the
time-evolution of the solution over variation in space and parameter.
 
 \subsubsection{Spatial subspaces}\label{sec:constructSpatialSubspaces}
 We propose to compute the spatial subspaces $\spatialSubspacei{i} \defeq
 \Span{\basisvecspacei{i}}$, $i\innat{\nbasisspace}$ via proper orthogonal
 decomposition (POD) applied to the spatial snapshot matrix
 $\snapshotmatspace$. In particular, we compute the spatial bases
 from the  singular value decomposition (SVD) as
 \begin{align} \label{eq:spatialSVD}
  \tensorUnfold{1} &= \leftsingmatspace \singvalmatspace
	\rightsingmatspace^T\in\RR{\ndof\times\ntimedof\ntrain}\\
	\basisvecspacei{i}& = \leftsingvecspacei{i},\quad i\innat\nbasisspace.
 \end{align} 
 where $\nbasisspace\leq\min(\ndof,\ntimedof\ntrain)$ and 
 $\leftsingmatspace\equiv\left[\leftsingvecspacei{1}\ \cdots\
 \leftsingvecspacei{\ntimedof\ntrain}\right]$.
 The spatial subspace requires $\nbasisspace\nspacedof$  storage.
We now describe three approaches to computing the temporal subspaces
$\temporalSubspaceFuncArg{i}$, $i\innat{\nbasisspace}$ from the state tensor
$\tensor$.

\subsubsection{Fixed temporal
subspace via T-HOSVD}\label{sec:solutionSubspaceFixed}
The most straightforward approach is to compute a fixed temporal subspace
$\temporalSubspaceFunc\defeq\Span{\basisvectimeFunci{i}}_{i=1}^{\nbasistime}\subset \timeDiscreteFuncSpace$ such that
$\temporalSubspaceFuncArg{i}=\temporalSubspaceFunc$ and
$\nbasistime^i=\nbasistime$,
$i\innat{\nbasisspace}$. We can compute such a subspace by applying 
POD to the
temporal snapshot matrix $\snapshotmattime$. Specifically, we can 
compute the fixed temporal bases
 from the SVD of $\snapshotmattime$ as
 \begin{align} \label{eq:temporalSVD2}
  \snapshotmattime &= \leftsingmattime \singvalmattime
	\rightsingmattime^T\in\RR{\ntimedof\times\nspacedof\ntrain}
	\\
	\basisvectimei{j} &= \leftsingvectimeArg{j},\quad j\innat\nbasistime,
 \end{align} 
 where $\nbasistime\leq \ndof\ntrain$,
  $\basisvectimei{j}\defeq
\vectorize{\basisvectimeFunci{j}}$, and
$\leftsingmattime\equiv
\left[
\leftsingvectimeArg{1}
\ \cdots\ 
\leftsingvectimeArg{\nspacedof\ntrain}
\right]$.  This approach is equivalent to applying the truncated higher-order
SVD(T-HOSVD) \cite{de2000multilinear, tucker1966some} to the
state tensor $\tensor$, and it requires
$\nbasistime\ntimedof$ storage for the temporal subspace.
We note that this approach is similar to that
proposed in Ref.~\cite{baumann2016space} in the context of space--time
Galerkin projection performed at the time-continuous level.

\subsubsection{Fixed temporal
subspace via ST-HOSVD}\label{sec:solutionSubspaceFixedSTHOSVD}
Alternatively, we recall from Eq.~\eqref{eq:spacetimeLSPGcontinuous} that each
space--time basis vector is the
Kronecker product of a spatial basis vector with a temporal basis vector.
Thus, it is sensible to compute the temporal subspace according to the
observed time evolution of the solution in the coordinates defined by the
spatial subspace. Mathematically, we can achieve this by applying the
sequentially truncated HOSVD (ST-HOSVD)
\cite{vannieuwenhoven2012new,austin2015parallel}.
Rather than computing the SVD of $\snapshotmattime$, which is agnostic to
dimensionality reduction in space, ST-HOSVD instead computes
the SVD of the mode-2 unfolding of $\tensorReduce{\basismatspace}$, where we have defined
$\tensorReduce{\basisDummy}\defeq\tensor\times_1 \basisDummy$ for
$\basisDummy\in\RR{\ndof\times p}$ such that
	$
	\tensorReduceUnfold{\basisDummy}{1} = \basisDummy^T\snapshotmatspace 	$.
 In
particular, we
have
\begin{align}\label{eq:temporalSVD3}
\tensorReduceUnfold{\basismatspace}{2} &= \leftsingmattime(\basismatspace)
\singvalmattime(\basismatspace)
\rightsingmattime(\basismatspace)^T\in\RR{\ntimedof\times\nbasisspace\ntrain}
	\\
	\label{eq:temporalSVD3Next}
\basisvectimei{j} &= \leftsingvectimeArg{j}(\basismatspace),\quad
	j\innat\nbasistime,
\end{align}
where $\nbasistime \leq \nbasisspace\ntrain$ and $
\leftsingmattime(\basismatspace)
\equiv\left[
\leftsingvectimeArg{1}(\basismatspace)
\cdots\
\leftsingvectimeArg{\nbasisspace\ntrain}(\basismatspace)
\right]$.
In addition to enabling the temporal basis to be associated with the time
evolution of the spatial basis
$\basismatspace$, this approach is less computationally expensive than
applying the T-HOSVD, as
$\tensorReduceUnfold{\basismatspace}{2}\in\RR{\ntimedof\times\nbasisspace\ntrain}$,
while $\tensorUnfold{2}\in \RR{\ntimedof\times\nspacedof\ntrain}$ and
	typically $\nbasisspace\ll\ndof$.
This temporal subspace also requires
$\nbasistime\ntimedof$ storage.

\subsubsection{Tailored temporal
subspaces via ST-HOSVD}\label{sec:solutionSubspaceTailored}
We can further tailor the temporal bases to capture the time evolution of
each individual spatial basis vector. To achieve this using the ST-HOSVD, we
compute the bases as
\begin{align}
\label{eq:tailoredSVD}\tensorReduceUnfold{\basisvecspacei{i}}{2} &=
\leftsingmattime(\basisvecspacei{i}) \singvalmattime(\basisvecspacei{i})
\rightsingmattime(\basisvecspacei{i})^T\in\RR{\ntimedof\times\ntrain}
	\\
	\label{eq:tailoredSVD2}\basisvectimeij{i}{j} &= \leftsingvectimeArg{j}(\basisvecspacei{i}),\quad
	i\innat{\nbasisspace},\ j\innat{\nbasistime^i},
\end{align}
where $\basisvectimeij{i}{j}\defeq
\vectorize{\basisvectimeFuncij{i}{j}}$ and $
\leftsingmattime(\basisvecspacei{i}) 
\defeq\left[
\leftsingvectimeArg{1}(\basisvecspacei{i})\ \cdots\
\leftsingvectimeArg{\ntrain}(\basisvecspacei{i})
\right]$.
This approach generates a tailored temporal subspace
$\temporalSubspaceFuncArg{i} \defeq
\Span{\basisvectimeFuncij{i}{j}}_{j=1}^{\nbasistime^i}$ for each spatial
subspace $ \spatialSubspacei{i} \defeq \Span{\basisvecspacei{i}}$. Further,
because
$\tensorReduceUnfold{\basisvecspacei{i}}{2}\in\RR{\ntimedof\times\ntrain}$,
$i\innat{\nbasisspace}$, the cost of computing the $\nbasisspace$ SVDs \eqref{eq:tailoredSVD} is significantly less than
computing the SVDs in either \eqref{eq:temporalSVD2}
or \eqref{eq:temporalSVD3}; this results from the quadratic dependence of the
SVD cost on the number of columns in the matrix. However, the maximum
dimension of each temporal basis is limited to the number of
training-parameter instances, i.e., $\nbasistime^i\leq\ntrain$,
$i\innat{\nbasisspace}$. 
This temporal subspace requires
$\sum_{i=1}^\nbasisspace\nbasistime^i\ntimedof=\nbasisst\ntimedof$ storage,
which is larger than that required by the fixed temporal subspaces.  We note
that this was the approach employed to construct temporal bases in our
previous work based on forecasting
\cite{carlberg2015decreasing,carlberg2016data}.

\subsection{Space--time residual basis}\label{sec:spacetimeresidual}

We propose to construct the space--time residual basis $\basismatrest\in
\orthomat{\nspacedof\ntimedof }{ \nbasisresst}$ from training data comprising
the space--time residual computed at a set of pairs of reduced coordinates and
parameter instances, i.e.,
$\{\redsolapproxTrainArg{i},\paramTrainres{i}\}_{i\innat{\nrestrain}}$.  In
this case, the space--time residual `snapshots' can be expressed in a residual
tensor 
$\tensorres
 \in\RR{\nspacedof\times\ntimedof\times \nrestrain}$ with entries
\begin{equation}\label{eq:tensorres}
\tensorres_{ijk}\defeq
\resRedEntryi{i}(t^j;\redsolapproxTrainArg{k};\paramTrainres{k}).
\end{equation}
We propose three methods for determining these training instances
$\{\redsolapproxTrainArg{i},\paramTrainres{i}\}_{i\innat{\nrestrain}}$.
\begin{enumerate} 
\item\label{resTrain:ROMtraining} \textit{\methodAcronym\ ROM training iterations}. This
approach employs
\begin{equation}
\{\redsolapproxTrainArg{i},\paramTrainres{i}\}_{i\innat{\nrestrain}} = 
\{\redsolapproxSTIt{k}(\param), \param\}_{\param\in\paramDomainTrainres,\
k\innatZero{\kmax(\param)}},
\end{equation}
where $\redsolapproxSTIt{k}(\param)$ corresponds to the \methodAcronym\ solution
at the $k$th Gauss--Newton iteration
\eqref{eqref:STGaussNewtonOne}--\eqref{eqref:STGaussNewtonTwo} for some
specified weighting matrix $\weightmatst$ that does not rely on data (e.g.,
$\weightmatst = \identity{\ndof\ntimedof}$), and $\paramDomainTrainres \subset\paramDomain$ denotes a set of training
parameter instances that is in general different from $\paramDomainTrain$.
This case leads to $\nrestrain =
\sum_{\param\in\paramDomainTrainres}(\kmax(\param)+1)$ and requires
$\card{\paramDomainTrainres}$ training simulations of the \methodAcronym\ ROM.
\item \textit{Projection of FOM training solutions}. This approach employs
\begin{equation}
\{\redsolapproxTrainArg{i},\paramTrainres{i}\}_{i\innat{\nrestrain}} = 
\{\solFuncOnlyParamProject, \param\}_{\param\in\paramDomainTrainres},
\end{equation}
where $\solFuncOnlyParamProject$ is defined as
\begin{equation} \label{eq:FOMprojection}
\solFuncOnlyParamProject\defeq
(\stbasismat^T\stbasismat)^{-1}\stbasismat^T\vectorize{\solFuncOnlyParam-\solinit(\param)}
,\end{equation} 
 where $\stbasismat\defeq\bmat{
\stbasisvec{1} & \cdots &\stbasisvec{\nbasisst}}
\in\RR{\nspacedof\ntimedof\times \nbasisst}
$.
This approach does not require any additional
training simulations; it simply requires $\nrestrain$
evaluations of the space--time residual.
\item \textit{Random samples}. In this approach, the training set
$\{\redsolapproxTrainArg{i},\paramTrainres{i}\}_{i\innat{\nrestrain}}$
comprises random samples (e.g., via Latin hypercube sampling) from
$\sampleSpaceGenCoord\times\paramDomain$ where
$\sampleSpaceGenCoord\subseteq\RR{\nbasisst}$.
This approach also requires only $\nrestrain$ evaluations of the
space--time residual.
\end{enumerate}
Given the residual tensor $\tensorres$, we can compute the associated space--time residual basis in
a manner analogous to the approaches proposed in Section
\ref{sec:spacetimetrialconstruct}.
That is, we can compute spatial residual bases as 
 \begin{align} \label{eq:spatialSVDRes}
  \tensorresUnfold{1} &= \leftsingmatspaceres \singvalmatspaceres
	\rightsingmatspaceres^T\in\RR{\nspacedof\times\ntimedof \nrestrain}\\
	\basisvecspaceresi{i}& = \leftsingvecspaceresi{i},\quad i\innat\nbasisspaceres
 \end{align} 
 with $\nbasisspaceres\leq\ntimedof \nrestrain$
 and temporal residual bases either via the T-HOSVD
 \begin{align} \label{eq:temporalresSVD2}
  &\tensorresUnfold{2} = \leftsingmattimeres \singvalmattimeres
	\rightsingmattimeres^T\in\RR{\ntimedof\times \nspacedof \nrestrain}
	\\
	&\basisvectimeresi{j} = \leftsingvectimeresArg{j},\quad
	j\innat\nbasistimeres
 \end{align} 
 with $\nbasistimeres\leq\nspacedof \nrestrain$,
 the ST-HOSVD
\begin{align}
\label{eq:globalresSVD}&\tensorresReduceUnfold{\basismatspaceres}{2} =
\leftsingmattimeres(\basismatspaceres) \singvalmattimeres(\basismatspaceres)
\rightsingmattimeres(\basismatspaceres)^T\in\RR{\ntimedof\times\nbasisspaceres \nrestrain}
	\\
\label{eq:globalresSVD2}	&\basisvectimeresi{j} = \leftsingvectimeresArg{j}(\basismatspaceres),\quad
	j\innat{\nbasistimeres}
\end{align}
with $\nbasistimeres\leq\nbasisspaceres \nrestrain$,
 or the tailored ST-HOSVD
\begin{align}
\label{eq:tailoredresSVD}&\tensorresReduceUnfold{\basisvecspaceresi{i}}{2} =
\leftsingmattimeres(\basisvecspaceresi{i}) \singvalmattimeres(\basisvecspaceresi{i})
\rightsingmattimeres(\basisvecspaceresi{i})^T\in\RR{\ntimedof\times
\nrestrain},\quad
	i\innat{\nbasisspaceres}
	\\
\label{eq:tailoredresSVD2}	&\basisvectimeresij{i}{j} = \leftsingvectimeresArg{i}(\basisvecspaceresi{i}),\quad
	i\innat{\nbasisspaceres},\ j\innat{\nbasistimeres^i}
\end{align}
with $\nbasistimeres^i\leq\nrestrain$,
where 
$\tensorresReduce{\basisDummy}\defeq\tensorres\times_1 \basisDummy$,
$\basisvectimeresi{j}\defeq
\vectorize{\basisvectimeresFunci{j}}$,
and
$\basisvectimeresij{i}{j}\defeq
\vectorize{\basisvectimeresFuncij{i}{j}}$.

We compute the orthogonal residual basis $\basismatrest\in
\orthomat{\nspacedof\ntimedof }{ \nbasisresst}$ from the QR factorization of
the nonorthogonalized basis 
$\basisres\equiv\bmat{\basisresvecArg{1}&\cdots&\basisresvecArg{\nbasisresst}}\in\RR{\nspacedof\ntimedof\times\nbasisresst}
$ as
 \begin{equation} 
	\basisres
=\basismatrest\rfactor,
  \end{equation} 
where $\basisresvecij{i}{j} =
\vectorize{\basisvecspaceresi{i}\otimes\basisvectimeresFuncij{i}{j}}$ and
$\mappingresNo:(i,j)\mapsto \sum_{k=1}^{i-1}\nbasistimeres^k + j $ provides a
mapping from the spatial-basis and temporal-basis indices to a space--time
basis index for the residual.
\subsection{Sampling matrix}\label{sec:constructSamplingMatrix}

We propose three approaches for computing the space--time sample set
$\spacetimesampleset\defeq\{(\spaceindex{i},\timeindex{i})\}_{i\innat\nressamplest}$ that
defines the residual-sampling matrix $\samplematst$ in
Eq.~\eqref{eq:samplematst}.
\begin{enumerate} 
\item\label{greedySpaceTime} \textit{Greedy sampling of space--time indices}. This approach selects
space--time indices in a greedy manner by executing Algorithm
\ref{al:greedySpacetimeSample}, which is a
space--time
adaptation of the greedy method presented in
Ref.~\cite{CarlbergGappy,carlbergJCP} that allows for oversampling to 
enable least-squares regression via gappy POD.
\item\label{greedySpaceThenTime} \textit{Sequential greedy sampling of spatial then temporal indices}. This approach computes space--time sample indices as the
Cartesian product of spatial and temporal samples, i.e., $\spacetimesampleset =
\spacesampleset\times\timesampleset$. First, the approach selects spatial indices
$\spacesampleset$ by executing Algorithm \ref{al:greedySpatialSample} with
inputs $\basismatrest$, the desired number of spatial samples $\nresspaceind$, and $\timesampleset=\nat{\ntimedof}$
(i.e., full temporal sampling).
Then, the method selects temporal indices $\timesampleset$ by executing
Algorithm \ref{al:greedyTemporalSample} with inputs
$\basismatrest$, the desired number of temporal samples $\nrestimeind$, and $\spacesampleset$ computed from Algorithm
\ref{al:greedySpatialSample}.
\item\label{greedyTimeThenSpace} \textit{Sequential greedy sampling of temporal then spatial indices}.
This approach also computes space--time sample indices as $\spacetimesampleset =
\spacesampleset\times\timesampleset$. First, the approach selects temporal indices
$\timesampleset$ by executing Algorithm \ref{al:greedyTemporalSample} with
inputs $\basismatrest$, the desired number of temporal samples $\nrestimeind$, and $\spacesampleset=\nat{\nspacedof}$
(i.e., full spatial sampling).
Then, the method selects spatial indices $\spacesampleset$ by executing
Algorithm \ref{al:greedySpatialSample} with inputs
$\basismatrest$, 
the desired number of spatial samples
$\nresspaceind$, and $\timesampleset$ computed from Algorithm
\ref{al:greedyTemporalSample}.
\end{enumerate}
We note that enforcing $\spacetimesampleset =
\spacesampleset\times\timesampleset$ as in approaches
\ref{greedySpaceThenTime} and \ref{greedyTimeThenSpace} above comes with a
practical advantage. Namely, a single sample mesh
\cite{carlbergJCP}---which is tasked with computing spatial samples associated
with $\spacesampleset$---can be employed for all sampled time instances
$\timesampleset$.
  \begin{algorithm}[t]
 \caption{Greedy algorithm for constructing spatiotemporal sample set
 $\spacetimesampleset$}
 \label{al:greedySpacetimeSample}
 \textbf{Input:} residual basis
 $\basismatrest\in\orthomat{\nspacedof\ntimedof}{\nbasisresst}$; desired number of spatial samples
 $\nressamplest\leq\nspacedof\ntimedof$ \\ 
 \textbf{Output:} space--time sample set
 $\spacetimesampleset\subseteq\nat{\nspacedof}\times\nat{\ntimedof}$
   \begin{algorithmic}[1]
		 \STATE $\spacetimesampleset \leftarrow \emptyset$
		 \reviewerA{\COMMENT{Initialize spatiotemporal sample set.}}
			\STATE Determine number of spatiotemporal samples to compute at each greedy iteration:
			$$\spacetimesamplestoadd{i} =
			\begin{cases}\text{floor}(\nressamplest/\nbasisresst)+1,\quad
			i=1,\ldots,\nressamplest\ \text{mod}\ \nbasisresst\\
\text{floor}(\nressamplest/\nbasisresst),\quad
i=\nressamplest\ \text{mod}\ \nbasisresst+1,\ldots,\nbasisresst
			\end{cases}$$
      \FOR[Greedy iteration]{ $i=1,\ldots,\nbasisresst$ }
				\IF{$i=1$}
				\STATE $\errorvec \leftarrow \basismatrestvec{1}$ \reviewerA{\COMMENT{Initialize
				the error vector.}}
				\ELSE{}
				\STATE $\errorvec \leftarrow
				\left(\identity{\ourReReading{\nspacedof\ntimedof}} -
\left[\basisvecressti{1} \ \cdots \ \basisvecressti{i-1}\right]\left(\samplematst
\left[\basisvecressti{1} \ \cdots \
\basisvecressti{i-1}\right]\right)^+\samplematst\right)\basisvecressti{i} $, where
$\samplematst$ is defined in Eq.~\eqref{eq:samplematst}. \reviewerA{\COMMENT{Compute the
error in the gappy POD approximation of $\basisvecressti{i}$.}}
				\ENDIF
					\FOR{$j=1,\ldots,\spacetimesamplestoadd{i}$}
					\STATE
					$(\spaceindexNo^{\star},\timeindexNo^{\star})=\arg\max_{(k,n)\in\nat{\nspacedof}\times\nat{\ntimedof}\setminus\spacetimesampleset}|\errorvecFuncEntry{k}(t^n)|$,
					where $\errorvecFunc \equiv\left[\errorvecFuncEntry{1}\ \cdots\
					\errorvecFuncEntry{\nspacedof}\right]^T\defeq\vectorizeinv{\errorvec}$\\
					\reviewerA{\COMMENT{Identify the spatiotemporal index with the largest gappy POD
					error.}}
					\STATE $\spacetimesampleset \leftarrow \spacetimesampleset\cup
					\{(\spaceindexNo^{\star},\timeindexNo^{\star})\}$
					\reviewerA{\COMMENT{Include the identified space--time index in the
					spatiotemporal sample set.}}
					\ENDFOR
      \ENDFOR
   \end{algorithmic}
 \end{algorithm}
  \begin{algorithm}[t]
 \caption{Greedy algorithm for constructing temporal sample set $\timesampleset$}
 \label{al:greedyTemporalSample}
 \textbf{Input:}
 residual basis
 $\basismatrest\in\orthomat{\nspacedof\ntimedof}{\nbasisresst}$; desired number of temporal samples
 $\nrestimeind\leq\ntimedof$; spatial sample set $\spacesampleset\subseteq\nat{\nspacedof}$\\ 
 \textbf{Output:} temporal sample set $\timesampleset\subseteq\nat{\ntimedof}$
   \begin{algorithmic}[1]
		 \STATE  $\timesampleset \leftarrow \emptyset$
		 \reviewerA{\COMMENT{Initialize temporal sample set.}}
			\STATE Determine number of temporal samples to compute at each greedy iteration:
			$$\timesamplestoadd{i} =
			\begin{cases}\text{floor}(\nrestimeind/\nbasisresst)+1,\quad
			i=1,\ldots,\nrestimeind\ \text{mod}\ \nbasisresst\\
\text{floor}(\nrestimeind/\nbasisresst),\quad
i=\nrestimeind\ \text{mod}\ \nbasisresst+1,\ldots,\nbasisresst
			\end{cases}$$
      \FOR[Greedy iteration]{ $i=1,\ldots,\nbasisresst$ }
				\IF{$i=1$}
				\STATE $\errorvec \leftarrow \basismatrestvec{1}$ \reviewerA{\COMMENT{Initialize
				the error vector.}}
				\ELSE{}
\STATE $\errorvec \leftarrow \left(\identity{\nspacedof\ntimedof} -
\bmat{\basisvecressti{1} & \cdots & \basisvecressti{i-1}}\left(\samplematst
\bmat{\basisvecressti{1} & \cdots & \basisvecressti{i-1}}\right)^+\samplematst\right)\basisvecressti{i} $, where
$\samplematst$ is defined in Eq.~\eqref{eq:samplematst} with
$\spacetimesampleset = \spacesampleset\times\timesampleset$.\\
\reviewerA{\COMMENT{Compute the
error in the gappy POD approximation of $\basisvecressti{i}$.}}
				\ENDIF
					\FOR{$j=1,\ldots,\timesamplestoadd{i}$}
					\STATE
					$\timeindexNo^\star=\arg\max_{n\innat\ntimedof\setminus\timesampleset}\|\errorvecFunc(t^n)\|_2^2$,
					where $\errorvecFunc \defeq\vectorizeinv{\errorvec}$\\
					\reviewerA{\COMMENT{Identify the temporal index with the largest gappy POD error averaged
					over all spatial indices.}}
					\STATE $\timesampleset \leftarrow \timesampleset\cup \{\timeindexNo^\star\}$
					\reviewerA{\COMMENT{Include the identified temporal index in the
					temporal sample set.}}
					\ENDFOR
      \ENDFOR
   \end{algorithmic}
 \end{algorithm}

  \begin{algorithm}[t]
 \caption{Greedy algorithm for constructing spatial sample set $\spacesampleset$}
 \label{al:greedySpatialSample}
 \textbf{Input:} residual basis
 $\basismatrest\in\orthomat{\nspacedof\ntimedof}{\nbasisresst}$; desired number of spatial samples
 $\nresspaceind\leq\nspacedof$; temporal sample set
 $\timesampleset\subseteq\nat{\ntimedof}$ \\ 
 \textbf{Output:} spatial sample set $\spacesampleset\subseteq\nat{\nspacedof}$
   \begin{algorithmic}[1]
		 \STATE $\spacesampleset \leftarrow \emptyset$
		 \reviewerA{\COMMENT{Initialize spatial sample set.}}
			\STATE Determine number of spatial samples to compute at each greedy iteration:
			$$\spacesamplestoadd{i} =
			\begin{cases}\text{floor}(\nresspaceind/\nbasisresst)+1,\quad
			i=1,\ldots,\nresspaceind\ \text{mod}\ \nbasisresst\\
\text{floor}(\nresspaceind/\nbasisresst),\quad
i=\nresspaceind\ \text{mod}\ \nbasisresst+1,\ldots,\nbasisresst.
			\end{cases}$$
      \FOR[Greedy iteration]{ $i=1,\ldots,\nbasisresst$ }
				\IF{$i=1$}
				\STATE $\errorvec \leftarrow \basismatrestvec{1}$ \reviewerA{\COMMENT{Initialize
				the error vector.}}
				\ELSE{}
\STATE $\errorvec \leftarrow \left(\identity{\ourReReading{\nspacedof\ntimedof}} -
\left[\basisvecressti{1} \ \cdots \ \basisvecressti{i-1}\right]\left(\samplematst
\left[\basisvecressti{1} \ \cdots \
\basisvecressti{i-1}\right]\right)^+\samplematst\right)\basisvecressti{i} $, where
$\samplematst$ is defined in Eq.~\eqref{eq:samplematst} with
$\spacetimesampleset = \spacesampleset\times\timesampleset$. \reviewerA{\COMMENT{Compute the
error in the gappy POD approximation of $\basisvecressti{i}$.}}
				\ENDIF
					\FOR{$j=1,\ldots,\spacesamplestoadd{i}$}
					\STATE
					$\spaceindexNo^\star=\arg\max_{k\innat\nspacedof\setminus\spacesampleset}\sum_{n=1}^\ntimedof\ourReReading{(}\errorvecFuncEntry{k}(t^n)\ourReReading{)}^2$,
					where $\errorvecFunc \equiv\left[\errorvecFuncEntry{1}\ \cdots\
					\errorvecFuncEntry{\nspacedof}\right]^T\defeq\vectorizeinv{\errorvec}$\\
					\reviewerA{\COMMENT{Identify the spatial index with the largest gappy POD
					error averaged over all temporal indices.}}
					\STATE $\spacesampleset \leftarrow \spacesampleset\cup
					\{\spaceindexNo^\star\}$
					\reviewerA{\COMMENT{Include the identified spatial index in the
					spatial sample set.}}
					\ENDFOR
      \ENDFOR
   \end{algorithmic}
 \end{algorithm}

\subsection{Initial guess}\label{sec:initialGuess}

One practical challenge of \methodAcronym\ relative to
(spatial-projection-based) LSPG is devising an accurate
initial guess $\redsolapproxSTIt{0}$ for the Gauss--Newton iterations
\eqref{eqref:STGaussNewtonOne}--\eqref{eqref:STGaussNewtonTwo}. In
LSPG, the initial guess employed when solving Problem
\eqref{eq:lspggnat}--\eqref{eq:lspggnatReduced} at a given time instance $t^n$
using the Gauss--Newton method can be
set to the solution from the previous time instance, i.e., 
$\redsolapproxArg{n(0)} = \redsolapproxArg{n-1}$. This choice typically leads
to rapid convergence due to the fact that the state undergoes limited
variation between time instances, particularly for small time steps $\dtArg{n}$.
Alternatively,
accurate initial guesses based on polynomial extrapolation or forecasting
\cite{carlberg2015decreasing} can be employed to further improve convergence.

However, in \methodAcronym, deriving an accurate initial guess
$\redsolapproxSTIt{0}(\param)$
is less straightforward. We propose computing
$\redsolapproxSTIt{0}(\param)$ as an interpolant of the generalized coordinates
$\redsolapproxST(\param)$ in the parameter space. That is, given the training
parameter instances $\paramDomainTrain\subset\paramDomain$ for which the FOM has been solved, we can compute the
projection of these solutions onto the space--time trial subspace as
$
\{\solFuncOnlyParamProject\}_{\param\in\paramDomainTrain}
$ with
%\begin{equation} 
%\solFuncOnlyParamProjectArg{i}{j}\defeq(\solFuncOnlyParam,\basisvecspacei{i}\basisvectimeFuncij{i}{j}(\cdot))_{\identity{\ndof\ntimedof}}.
%\end{equation} 
$\solFuncOnlyParamProject$ defined in Eq.~\eqref{eq:FOMprojection}.
Then, we can compute $\redsolapproxSTIt{0}(\param)$ via interpolation (or
least-squares regression) in the parameter space $\paramDomain$ using 
data $
\{\solFuncOnlyParamProject\}_{\param\in\paramDomainTrain}.
$ 

\section{Error analysis}\label{sec:error}

For simplicity, this section omits parameter dependence of all
operators \reviewerA{and assumes a uniform time step, i.e., $\dtArg{n} = \dt$,
$n\innat{\ntimedof}$}. Thus,
the FOM solution $\sol$ and the
\methodAcronym\ ROM solution $\solapproxST$ satisfy
\begin{align}
\label{eq:optLSPGST}
&\res(\cdot; \sol) = \zerobold\quad\text{and}
\quad \solapproxST = \underset{\solDummy\in
\spacetimesubspace
}{\arg\min} \|\res(\cdot; \solDummy)\|_{\weightmatst^T\weightmatst},
\end{align}
respectively.
We begin by stating assumptions that will be leveraged in subsequent analyses:
\begin{Assumption}[resume=assumption]
\item \label{assLipschitz}
There exists a constant $\lipschitzFK>0$ such that
	$$
\|\flux(\wbold,t)-\flux(\ybold,t)\|_2\leq\lipschitzFK
\|\wbold-\ybold\|_2,\quad\forall \wbold,\ybold\in\RR{\ndof},\ \forall
t\in[0,\totaltime]
	$$
\item 
\label{asssmallDt} The time step $\dt$ is sufficiently small such that
$$
\dt <
	\frac{\sigma_\text{min}(\Alm)}{\lipschitzFK\sigma_\text{max}(\Blm)},
$$
where
 \begin{equation*} 
   \Alm \defeq \bmat{\alpha_0^1 \identity{} &              &
	 &             
	           \\ \alpha_1^2 \identity{} & \alpha_0^2 \identity{} &
						 &             
		         \\  \ddots&  & \ddots
						 &                       &  
	           \\                 & \alpha_{\knArg{\ntimedof}}^{\ntimedof} \identity{} & \cdots & \alpha_0^{\ntimedof} \identity{}},
   \quad
   \Blm \defeq \bmat{\beta_0^1 \identity{} &              &
	 &             
	           \\ \beta_1^2 \identity{} & \beta_0^2 \identity{} &
						 &             
		         \\  \ddots&  & \ddots
						 &                         
	           \\                 & \beta_{\knArg{\ntimedof}}^{\ntimedof} \identity{} & \cdots & \beta_0^{\ntimedof} \identity{}},
 \end{equation*}  
 where $\identity{} = \identity{\nspacedof}$ here 
 and 
 $\sigma_\text{min}(\boldsymbol A)$ 
 and 
 $\sigma_\text{max}(\boldsymbol A)$ 
 denote the minimum and maximum singular values of the matrix $\boldsymbol
 A$, respectively.
\item \label{assBounded} The space--time weighting matrix $\weightmatst$ is defined such
that the residual in the weighted space--time norm is uniformly
bounded below by the $\ell^2$-norm of the residual over all elements of the
space--time trial subspace, i.e., there exists $\normEquiv>0$ such that $$
\|\weightmatst\res(\cdot;\solDummy)\|_{2}\geq\normEquiv\|\res(\cdot;\solDummy)\|_2,\quad\forall\solDummy\in\spacetimesubspace.
$$
\end{Assumption}

	\begin{lemma}\label{lem:reslipschitz}
	Under Assumption \ref{assLipschitz}, the linear multistep residual is also
	Lipschitz continuous, i.e., 
\begin{equation} \label{eq:lipschitz}
\left\| \res(\cdot;
\wbold)-
		\res(\cdot;
\ybold)
		\right\|_{\weightmatst^T\weightmatst}\leq
\lipschitzK\|\wbold-\ybold\|_2\quad\forall \wbold,\ybold\in\RR{\ndof}\otimes\timeDiscreteFuncSpace.
\end{equation} 
with Lipschitz constant
\begin{equation} \label{eq:deflipschitzK}
\lipschitzK \defeq \sigma_\text{max}(\weightmatst\Alm) +
\dt\lipschitzFK\sigma_\text{max}(\weightmatst\Blm).
\end{equation} 

	\end{lemma}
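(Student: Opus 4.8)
The plan is to exploit the affine-plus-nonlinear structure of the unrolled residual. First I would record the matrix representation that underlies the definitions of $\Alm$ and $\Blm$: combining the residual \eqref{eq:residual} with the vectorization \eqref{eq:vectorize}, the unrolled residual reads
\[
\resst(\solDummy) = \Alm\,\vectorize{\solDummy} - \dt\,\Blm\,\vectorize{\flux(\solDummy(\cdot),\cdot)} + \bbold,
\]
where the block-lower-triangular matrices $\Alm$ and $\Blm$ assemble the coefficients $\{\alpha_j^n\}$ and $\{\beta_j^n\}$ so that block row $n$ reproduces $\sum_{j=0}^{\kn}\alpha_j^n\solDummy(t^{n-j})$ and $\sum_{j=0}^{\kn}\beta_j^n\flux(\solDummy(t^{n-j}),t^{n-j})$ for the indices $n-j\geq 1$, while $\bbold$ collects the state-independent terms arising from the fixed initial state $\solinit$. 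Because this constant is identical for both arguments, it cancels upon differencing, yielding
\[
\resst(\wbold) - \resst(\ybold) = \Alm\big(\vectorize{\wbold}-\vectorize{\ybold}\big) - \dt\,\Blm\big(\vectorize{\flux(\wbold(\cdot),\cdot)}-\vectorize{\flux(\ybold(\cdot),\cdot)}\big).
\]

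Next I would translate the left-hand side of \eqref{eq:lipschitz} into $\ell^2$ terms. By the definition of the $\metric$-norm with $\metric=\weightmatst^T\weightmatst$ and the linearity of $\vectorize{\cdot}$, we have $\|\res(\cdot;\wbold)-\res(\cdot;\ybold)\|_{\weightmatst^T\weightmatst} = \|\weightmatst(\resst(\wbold)-\resst(\ybold))\|_2$. Substituting the difference formula and applying the triangle inequality splits this into a linear part and a velocity part:
\[
\|\weightmatst(\resst(\wbold)-\resst(\ybold))\|_2 \leq \|\weightmatst\Alm(\vectorize{\wbold}-\vectorize{\ybold})\|_2 + \dt\,\|\weightmatst\Blm(\vectorize{\flux(\wbold(\cdot),\cdot)}-\vectorize{\flux(\ybold(\cdot),\cdot)})\|_2.
\]
For the first term I would bound the operator by its largest singular value, $\|\weightmatst\Alm(\vectorize{\wbold}-\vectorize{\ybold})\|_2\leq\sigma_\text{max}(\weightmatst\Alm)\|\vectorize{\wbold}-\vectorize{\ybold}\|_2$, and use the isometry $\|\vectorize{\wbold}-\vectorize{\ybold}\|_2=\|\wbold-\ybold\|_2$. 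For the second term, bounding the operator supplies the factor $\sigma_\text{max}(\weightmatst\Blm)$, after which Assumption \ref{assLipschitz} is applied componentwise over the time grid:
\[
\|\vectorize{\flux(\wbold(\cdot),\cdot)}-\vectorize{\flux(\ybold(\cdot),\cdot)}\|_2^2 = \sum_{n=1}^{\ntimedof}\|\flux(\wbold(t^n),t^n)-\flux(\ybold(t^n),t^n)\|_2^2 \leq \lipschitzFK^2\sum_{n=1}^{\ntimedof}\|\wbold(t^n)-\ybold(t^n)\|_2^2 = \lipschitzFK^2\|\wbold-\ybold\|_2^2.
\]
Collecting the two bounds produces exactly $\lipschitzK=\sigma_\text{max}(\weightmatst\Alm)+\dt\lipschitzFK\sigma_\text{max}(\weightmatst\Blm)$, as claimed.

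The mechanical inequalities here---submultiplicativity of the spectral norm, the triangle inequality, and the passage from the componentwise to the global Lipschitz estimate on the stacked velocity---are routine; the only point that requires care is the bookkeeping of the first step, namely verifying that the block-triangular matrices $\Alm$ and $\Blm$ indeed realize the linear-multistep assembly on the unrolled state and that all initial-condition terms collapse into a single constant that cancels under differencing. Once that representation is in place and the identity $\|\cdot\|_{\weightmatst^T\weightmatst}=\|\weightmatst\,\vectorize{\cdot}\|_2$ is invoked, the bound follows directly, with no dependence on the particular multistep scheme beyond what is already encoded in $\Alm$ and $\Blm$.
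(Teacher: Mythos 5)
Your proposal is correct and follows essentially the same route as the paper's proof: the affine representation $\resst(\wbold)=\Alm\vectorize{\wbold}-\dt\Blm\vectorize{\flux(\wbold(\cdot),\cdot)}+\text{const}$, cancellation of the constant under differencing, the triangle inequality, spectral-norm bounds, and the componentwise Lipschitz estimate for the stacked velocity. If anything, your handling of the stacked-velocity term is slightly more careful, since you bound the difference $\flux(\wbold(t^n),t^n)-\flux(\ybold(t^n),t^n)$ directly rather than stating the bound for a single argument as the paper does.
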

\begin{proof}
Defining $
\fluxst:\solDummy \mapsto
\vectorize{\flux(\solDummy(\cdot),\cdot)}
$ and
 noting that $\|\fluxst(\wbold)\|_2^2=
\sum_{n=1}^{\ntimedof}\|\flux(\wbold(t^n),t^n)\|_2^2\leq \lipschitzFK^2
\sum_{n=1}^{\ntimedof}\|\wbold(t^n)\|_2^2
=
\lipschitzFK^2\|\wbold\|_2^2
$, we have
\begin{equation} 
\|\fluxst(\wbold)\|_2\leq
\lipschitzFK\|\wbold\|_2,\quad
\forall\wbold\in\RR{\nspacedof}\otimes\timeDiscreteFuncSpace,
\end{equation} 
i.e., the Lipschitz constant of $\fluxst$ is identical to that of $\flux$.
Further noting that 
 \begin{equation}\label{eq:resLM}
\resst(\wbold) = \Alm\vectorize{\wbold} - \dt\Blm\fluxst(\wbold)
    + \vectorize{\rhsFunc}, 
 \end{equation} 
 where 
$\rhsFuncArg{n} = \alpha_n^n\solinit -
\dt\beta_n^n\flux(\solinit)$, we
 have from the triangle inequality
\begin{align}\label{eq:rLipschitz}
\begin{split}
\left\| \res(\cdot;
\wbold) - 
\res(\cdot;
\ybold)
		\right\|_{\weightmatst^T\weightmatst}&=
		\|\weightmatst(\resst(\wbold)-\resst(\ybold))\|_2\\
		&= 
		\|\weightmatst \Alm(\vectorize{\wbold}-\vectorize{\ybold}) -
		\dt\weightmatst
		\Blm(\fluxst(\wbold)-\fluxst(\ybold))
	\|_2 \\
	&\leq
	\left(\sigmamax{\weightmatst \Alm} +\dt\lipschitzFK\sigmamax{\weightmatst
	\Blm}\right)\|\wbold-\ybold\|_2,\quad \forall \wbold,\ybold\in\RR{\nspacedof}\times
	\timeDiscreteFuncSpace.
\end{split}
\end{align}
\end{proof}

	\begin{lemma}\label{lem:resinvlipschitz}
	Under Assumptions and \ref{assLipschitz} and \ref{asssmallDt}, 
	the linear multistep residual is also inverse Lipschitz continuous, i.e., 
\begin{equation} \label{eq:invlipschitz}
\left\| \res(\cdot;
\wbold)-
		\res(\cdot;
\ybold)
		\right\|_{2}\geq
\inverseLipschitzK\|\wbold-\ybold\|_2\quad\forall \wbold,\ybold\in\RR{\ndof}\otimes\timeDiscreteFuncSpace.
\end{equation} 
with inverse Lipschitz constant
\begin{equation} \label{eq:definvlipschitzK}
\inverseLipschitzK \defeq \sigma_\text{min}(\Alm) -
\dt\lipschitzFK\sigma_\text{max}(\Blm).
\end{equation} 
	\end{lemma}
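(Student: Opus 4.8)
The plan is to follow the same template as the proof of Lemma~\ref{lem:reslipschitz}, but to replace the (forward) triangle inequality by the reverse triangle inequality so as to produce a lower bound. First I would recall the affine-in-velocity representation of the vectorized residual from Eq.~\eqref{eq:resLM}, namely $\resst(\wbold) = \Alm\vectorize{\wbold} - \dt\Blm\fluxst(\wbold) + \vectorize{\rhsFunc}$. Taking the difference at two arguments cancels the constant term $\vectorize{\rhsFunc}$, giving
\[
\resst(\wbold) - \resst(\ybold) = \Alm(\vectorize{\wbold} - \vectorize{\ybold}) - \dt\Blm\bigl(\fluxst(\wbold) - \fluxst(\ybold)\bigr).
\]
Because the lemma is stated in the \emph{unweighted} $\ell^2$-norm, I would then use $\|\res(\cdot;\wbold) - \res(\cdot;\ybold)\|_2 = \|\resst(\wbold) - \resst(\ybold)\|_2$ and apply the reverse triangle inequality to the right-hand side above.

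This yields $\|\resst(\wbold) - \resst(\ybold)\|_2 \geq \|\Alm(\vectorize{\wbold} - \vectorize{\ybold})\|_2 - \dt\|\Blm(\fluxst(\wbold) - \fluxst(\ybold))\|_2$. I would bound the first term from below using the fact that $\Alm$ is a square (block lower-triangular) matrix, so that $\|\Alm\vbold\|_2 \geq \sigmamin{\Alm}\|\vbold\|_2$ and hence $\|\Alm(\vectorize{\wbold}-\vectorize{\ybold})\|_2 \geq \sigmamin{\Alm}\|\wbold - \ybold\|_2$. For the second term I would use $\|\Blm\vbold\|_2 \leq \sigmamax{\Blm}\|\vbold\|_2$ together with the Lipschitz bound $\|\fluxst(\wbold)-\fluxst(\ybold)\|_2\leq\lipschitzFK\|\wbold-\ybold\|_2$ already established within the proof of Lemma~\ref{lem:reslipschitz}, which gives $\dt\|\Blm(\fluxst(\wbold)-\fluxst(\ybold))\|_2 \leq \dt\lipschitzFK\sigmamax{\Blm}\|\wbold - \ybold\|_2$. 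Combining the two estimates produces
\[
\|\resst(\wbold) - \resst(\ybold)\|_2 \geq \bigl(\sigmamin{\Alm} - \dt\lipschitzFK\sigmamax{\Blm}\bigr)\|\wbold - \ybold\|_2 = \inverseLipschitzK\|\wbold - \ybold\|_2,
\]
which is exactly the claimed bound with the constant defined in Eq.~\eqref{eq:definvlipschitzK}.

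The only point requiring care---rather than a genuine obstacle---is guaranteeing that the constant $\inverseLipschitzK$ is strictly positive, since otherwise the inequality is vacuous; this is precisely where Assumption~\ref{asssmallDt} enters, as the condition $\dt < \sigmamin{\Alm}/(\lipschitzFK\sigmamax{\Blm})$ rearranges to $\dt\lipschitzFK\sigmamax{\Blm} < \sigmamin{\Alm}$, i.e.\ $\inverseLipschitzK>0$. I would also remark that, in contrast to Lemma~\ref{lem:reslipschitz}, the weighting matrix $\weightmatst$ does not appear here because the bound is stated in the plain $\ell^2$-norm; using $\sigmamin{\Alm}$ (rather than the smallest singular value of a weighted operator) is what keeps the reverse inequality clean, and the Lipschitz property of $\fluxst$ transfers termwise from that of $\flux$ exactly as in the proof of Lemma~\ref{lem:reslipschitz}.
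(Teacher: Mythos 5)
Your proposal is correct and follows essentially the same route as the paper's proof: the paper likewise applies the reverse triangle inequality to the difference $\resst(\wbold)-\resst(\ybold)=\Alm(\vectorize{\wbold}-\vectorize{\ybold})-\dt\Blm(\fluxst(\wbold)-\fluxst(\ybold))$ and then passes to the singular-value bounds, with Assumption \ref{asssmallDt} guaranteeing positivity of $\inverseLipschitzK$. Your write-up simply makes explicit the steps $\|\Alm\vbold\|_2\geq\sigmamin{\Alm}\|\vbold\|_2$ and $\dt\|\Blm(\fluxst(\wbold)-\fluxst(\ybold))\|_2\leq\dt\lipschitzFK\sigmamax{\Blm}\|\wbold-\ybold\|_2$ that the paper leaves implicit in the phrase ``which directly leads to the desired result.''
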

\begin{proof}
Applying the reverse triangle inequality and employing Assumption \ref{asssmallDt}
yields
\begin{align}\label{eq:rLipschitzReverse}
\begin{split}
\left\| \res(\cdot;
\wbold) - 
\res(\cdot;
\ybold)
		\right\|_{2}&
=\|\Alm(\vectorize{\wbold}-\vectorize{\ybold})
-\dt\Blm(\fluxst(\wbold)-\fluxst(\ybold))
\|_2	\\
		&\geq
\| \Alm(\vectorize{\wbold}-\vectorize{\ybold})\|_2 - \dt\|
		\Blm(\fluxst(\wbold)-\fluxst(\ybold))
	\|_2,
\end{split}
\end{align}
which directly leads to the desired result.
\end{proof}

\begin{theorem}[\textit{a priori} error bound with respect to $\ell^2$-optimal solution]\label{thm:aprioritwo}
Under Assumptions \ref{assLipschitz}, \ref{asssmallDt},
and \ref{assBounded}, the error in the
\methodAcronym\ solution at any time instance can be bounded by the best
approximation error as
\begin{align}
\begin{split}
\|\sol-\solapproxST\|_{2}\leq
\frac{1}{\normEquiv}
\left(
\frac{\sigmamax{\weightmatst\Alm} + \dt\lipschitzFK\sigmamax{\weightmatst\Blm} }
{\sigmamin{\Alm}-\dt\lipschitzFK\sigmamax{\Blm}}
\right)
\underset{\solDummy\in
\spacetimesubspace
}{\min} \|\sol-\solDummy\|_2.
\end{split}
\end{align}
\end{theorem}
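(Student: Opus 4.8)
The plan is to prove a C\'ea-type bound by concatenating four ingredients: the inverse-Lipschitz lower bound of Lemma~\ref{lem:resinvlipschitz}, the norm-equivalence Assumption~\ref{assBounded}, the residual-minimizing optimality of $\solapproxST$, and the Lipschitz upper bound of Lemma~\ref{lem:reslipschitz}. The unifying observation is that the FOM solution is a zero of the residual, $\res(\cdot;\sol)=\zerobold$ by \eqref{eq:optLSPGST}, so each residual \emph{difference} taken against $\sol$ collapses to a single residual evaluation. Let $\solDummy^\star$ denote a best approximation, i.e.\ a minimizer of $\|\sol-\solDummy\|_2$ over $\solDummy\in\spacetimesubspace$, so that $\|\sol-\solDummy^\star\|_2=\min_{\solDummy\in\spacetimesubspace}\|\sol-\solDummy\|_2$.

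First I would apply Lemma~\ref{lem:resinvlipschitz} with the pair $(\solapproxST,\sol)$; since $\res(\cdot;\sol)=\zerobold$ this gives $\|\res(\cdot;\solapproxST)\|_2\geq\inverseLipschitzK\|\sol-\solapproxST\|_2$, and Assumption~\ref{asssmallDt} ensures $\inverseLipschitzK=\sigmamin{\Alm}-\dt\lipschitzFK\sigmamax{\Blm}>0$, so I may rearrange to $\|\sol-\solapproxST\|_2\leq\inverseLipschitzK^{-1}\|\res(\cdot;\solapproxST)\|_2$. Next I would convert the unweighted residual norm into the weighted one via Assumption~\ref{assBounded} evaluated at $\solapproxST\in\spacetimesubspace$, which yields $\|\res(\cdot;\solapproxST)\|_2\leq\normEquiv^{-1}\|\weightmatst\resst(\solapproxST)\|_2=\normEquiv^{-1}\|\res(\cdot;\solapproxST)\|_{\weightmatst^T\weightmatst}$, the last equality being the definition of the weighted norm with metric $\weightmatst^T\weightmatst$. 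I would then invoke optimality: since $\solapproxST$ minimizes $\|\res(\cdot;\cdot)\|_{\weightmatst^T\weightmatst}$ over $\spacetimesubspace$ and $\solDummy^\star$ lies in $\spacetimesubspace$, we have $\|\res(\cdot;\solapproxST)\|_{\weightmatst^T\weightmatst}\leq\|\res(\cdot;\solDummy^\star)\|_{\weightmatst^T\weightmatst}$. Finally, applying Lemma~\ref{lem:reslipschitz} with the pair $(\solDummy^\star,\sol)$ and again using $\res(\cdot;\sol)=\zerobold$ gives $\|\res(\cdot;\solDummy^\star)\|_{\weightmatst^T\weightmatst}\leq\lipschitzK\|\solDummy^\star-\sol\|_2=\lipschitzK\min_{\solDummy\in\spacetimesubspace}\|\sol-\solDummy\|_2$. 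Chaining these four inequalities produces $\|\sol-\solapproxST\|_2\leq(\normEquiv\inverseLipschitzK)^{-1}\lipschitzK\min_{\solDummy\in\spacetimesubspace}\|\sol-\solDummy\|_2$, and substituting $\lipschitzK=\sigmamax{\weightmatst\Alm}+\dt\lipschitzFK\sigmamax{\weightmatst\Blm}$ together with the expression for $\inverseLipschitzK$ recovers the stated bound exactly.

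Since the argument is a straightforward concatenation, I do not expect a genuine obstacle; the only real care is bookkeeping the direction of each inequality and ensuring each norm conversion is applied to a legitimate object. In particular, Assumption~\ref{assBounded} and the optimality step must be applied to elements of $\spacetimesubspace$ (here $\solapproxST$ and $\solDummy^\star$), whereas the two Lipschitz lemmas hold on all of $\RR{\ndof}\otimes\timeDiscreteFuncSpace$ and may therefore be evaluated at $\sol$. The one hypothesis that makes the entire chain well-posed is the strict positivity of $\inverseLipschitzK$ guaranteed by Assumption~\ref{asssmallDt}, which permits division in the very first step; I would flag that dependence explicitly.
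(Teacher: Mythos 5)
Your proposal is correct and follows essentially the same argument as the paper's proof: the identical four-step chain (inverse Lipschitz continuity of Lemma~\ref{lem:resinvlipschitz}, norm equivalence of Assumption~\ref{assBounded}, optimality of $\solapproxST$ from \eqref{eq:optLSPGST}, and Lipschitz continuity of Lemma~\ref{lem:reslipschitz}), each collapsed via $\res(\cdot;\sol)=\zerobold$. The only difference is cosmetic --- the paper writes the chain starting from $\min_{\solDummy\in\spacetimesubspace}\|\sol-\solDummy\|_2$ and bounding downward to the ST-LSPG error, whereas you start from $\|\sol-\solapproxST\|_2$ and bound upward; your explicit flagging of $\inverseLipschitzK>0$ via Assumption~\ref{asssmallDt} is a worthwhile clarification the paper leaves implicit.
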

\begin{proof}
We begin by defining
the $\ell^2$-optimal solution
$\soltwo$, which satisfies
\begin{align}
\label{eq:opttwo}
\soltwo = \underset{\solDummy\in
\spacetimesubspace
}{\arg\min} \|\sol-\solDummy\|_2,
\end{align}
Then, we can exploit the optimality properties of $\soltwo$ and $\solapproxST$
from Eqs.~\eqref{eq:opttwo} and \eqref{eq:optLSPGST}, respectively; Lipschitz
continuity (Lemma \ref{lem:reslipschitz}); residual-norm equivalence
(Assumption \ref{assBounded});  and inverse Lipschitz continuity of the
residual (Lemma \ref{lem:resinvlipschitz}) to derive the following
inequalities:
\begin{align}
\begin{split}
\underset{\solDummy\in
\spacetimesubspace
}{\min} \|\sol-\solDummy\|_2  &= \|\sol-\soltwo\|_2
\geq\frac{1}{\lipschitzK}
\|\res(\cdot;\soltwo)\|_{\weightmatst^T\weightmatst}\geq
\min_{\solDummy\in
\spacetimesubspace}\frac{1}{\lipschitzK}
\|\res(\cdot;\solDummy)\|_{\weightmatst^T\weightmatst}\\
&=
\frac{1}{\lipschitzK}
\|\res(\cdot;\solapproxST)\|_{\weightmatst^T\weightmatst}
\geq
\frac{\normEquiv}{\lipschitzK}
\|\res(\cdot;\solapproxST)\|_{2}
\geq
\frac{\normEquiv\inverseLipschitzK}{\lipschitzK}
\|\sol-\solapproxST\|_{2}.
\end{split}
\end{align}
Substituting in the definitions of $\lipschitzK$ and
$\inverseLipschitzK$ from
Eqs.~\eqref{eq:deflipschitzK} and \eqref{eq:definvlipschitzK}, respectively,
yields the stated result.
\end{proof}

\begin{theorem}[\textit{a priori} error bound with respect to $\ell^\infty$-optimal solution]\label{thm:apriori}
Under Assumptions \ref{assLipschitz}, \ref{asssmallDt},
and \ref{assBounded}, the error in the
\methodAcronym\ solution at any time instance can be bounded by the best
approximation error as
\begin{align}
\begin{split}
\max_{n\innat{\ntimedof}}\|\sol(t^n)-\solapproxST(t^n)\|_{2}\leq
\frac{\sqrt{\ntimedof}}{\normEquiv}
\left(
\frac{\sigmamax{\weightmatst\Alm} + \dt\lipschitzFK\sigmamax{\weightmatst\Blm} }
{\sigmamin{\Alm}-\dt\lipschitzFK\sigmamax{\Blm}}
\right)
\underset{\solDummy\in
\spacetimesubspace
}{\min} \max_{n\innat{\ntimedof}}\|\sol(t^n)-\solDummy(t^n)\|_2.
\end{split}
\end{align}
\end{theorem}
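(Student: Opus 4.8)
The plan is to derive this $\ell^\infty$-over-time bound directly from the $\ell^2$-over-space--time bound already established in Theorem \ref{thm:aprioritwo}, so that essentially no new analysis is required beyond an elementary passage between equivalent finite-dimensional norms. The crucial observation is that, by the definition of the vectorization map $\vectorizeNone$ in \eqref{eq:vectorize}, for any $\wbold\in\RR{\nspacedof}\otimes\timeDiscreteFuncSpace$ one has $\|\wbold\|_2^2 = \sum_{n=1}^{\ntimedof}\|\wbold(t^n)\|_2^2$, since $\vectorize{\wbold}$ simply stacks the spatial snapshots $\wbold(t^n)$ over the $\ntimedof$ time instances. Consequently the spatial errors at individual time instances and the full space--time error obey the standard two-sided inequality $\max_{n\innat{\ntimedof}}\|\wbold(t^n)\|_2 \leq \|\wbold\|_2 \leq \sqrt{\ntimedof}\,\max_{n\innat{\ntimedof}}\|\wbold(t^n)\|_2$, and the whole proof is a matter of applying the lower inequality on the left-hand side and the upper inequality to the best-approximation term.

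First I would bound the left-hand side: applying the lower inequality with $\wbold = \sol-\solapproxST$ gives $\max_{n\innat{\ntimedof}}\|\sol(t^n)-\solapproxST(t^n)\|_2 \leq \|\sol-\solapproxST\|_2$, and the latter quantity is already controlled by the right-hand side of Theorem \ref{thm:aprioritwo}. This reduces the task to replacing the $\ell^2$ best-approximation error $\min_{\solDummy\in\spacetimesubspace}\|\sol-\solDummy\|_2$ appearing in that bound by its $\ell^\infty$ counterpart. To do so, let $\solDummy^\star\in\spacetimesubspace$ denote a minimizer of the $\ell^\infty$ best-approximation problem $\min_{\solDummy\in\spacetimesubspace}\max_{n\innat{\ntimedof}}\|\sol(t^n)-\solDummy(t^n)\|_2$. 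Because $\solDummy^\star$ is merely one feasible point for the $\ell^2$ minimization, we have $\min_{\solDummy\in\spacetimesubspace}\|\sol-\solDummy\|_2 \leq \|\sol-\solDummy^\star\|_2$, and applying the upper inequality with $\wbold = \sol-\solDummy^\star$ yields $\|\sol-\solDummy^\star\|_2 \leq \sqrt{\ntimedof}\,\max_{n\innat{\ntimedof}}\|\sol(t^n)-\solDummy^\star(t^n)\|_2 = \sqrt{\ntimedof}\,\min_{\solDummy\in\spacetimesubspace}\max_{n\innat{\ntimedof}}\|\sol(t^n)-\solDummy(t^n)\|_2$.

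Chaining these three estimates—the left-hand lower inequality, the bound of Theorem \ref{thm:aprioritwo}, and the right-hand $\sqrt{\ntimedof}$ inequality evaluated at $\solDummy^\star$—produces exactly the claimed inequality, with the stability factor $(\sigmamax{\weightmatst\Alm}+\dt\lipschitzFK\sigmamax{\weightmatst\Blm})/(\sigmamin{\Alm}-\dt\lipschitzFK\sigmamax{\Blm})$ inherited unchanged from the $\ell^2$ result and $\sqrt{\ntimedof}$ being the only new multiplicative constant. There is no genuine obstacle here; the argument is a purely algebraic manipulation of equivalent norms. The only points requiring mild care are that the two directions of the norm equivalence are applied to \emph{different} vectors (the lower bound to the \methodAcronym\ error $\sol-\solapproxST$, the upper bound to the $\ell^\infty$-optimal deviation $\sol-\solDummy^\star$), and that the minimum over $\spacetimesubspace$ must be relaxed in the correct direction by evaluating the $\ell^2$ minimum at the $\ell^\infty$ minimizer rather than the reverse. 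Finally, since $\sqrt{\ntimedof}\sim\sqrt{\totaltime/\dt}$ while the stability constant itself grows linearly in the number of time steps, this extra factor is precisely what degrades the linear-in-time growth of Theorem \ref{thm:aprioritwo} to the sub-quadratic growth noted in Remark \ref{rem:stabilityConstant}.
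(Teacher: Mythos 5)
Your proposal is correct and produces the paper's bound with the identical stability constant, using the same essential ingredients (the Lipschitz/residual-optimality/inverse-Lipschitz chain and the two-sided equivalence $\|\wbold\|_\infty\leq\|\wbold\|_2\leq\sqrt{\ntimedof}\,\|\wbold\|_\infty$). The only difference is organizational: you invoke Theorem \ref{thm:aprioritwo} as a black box and apply the norm equivalence on either side of it (to $\sol-\solapproxST$ and to the $\ell^\infty$-optimal deviation, respectively), whereas the paper re-runs the same inequality chain from the proof of Theorem \ref{thm:aprioritwo} starting at the $\ell^\infty$-optimal solution $\solinfty$ and inserts exactly those two norm-equivalence steps at the beginning and end.
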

\begin{proof}
We begin by defining
the $\ell^\infty$-optimal solution
$\solinfty$, which satisfies
\begin{align}
\label{eq:optInfty}
&\solinfty = \underset{\solDummy\in
\spacetimesubspace
}{\arg\min} \|\sol-\solDummy\|_\infty,
\end{align}
where we have defined the $\ell^\infty$-norm as $\|\solDummy\|_\infty \defeq
\max_{n\innat{\ntimedof}}\|\solDummyFuncArg{n}\|_2$. Then, we can exploit
norm equivalence
$\|\solDummy\|_\infty\leq\|\solDummy\|_2\leq\sqrt{n}\|\solDummy\|_\infty$ for
$\solDummy\in\RR{n}$;  Lipschitz continuity (Lemma \ref{lem:reslipschitz}); residual-norm equivalence (Assumption \ref{assBounded});  and inverse Lipschitz continuity
of the residual (Lemma \ref{lem:resinvlipschitz}); and the
optimality properties of $\solinfty$ and $\solapproxST$ from Eqs.~\eqref{eq:optInfty} and
\eqref{eq:optLSPGST}, respectively, to derive the following inequalities:
\begin{align}
\begin{split}
\underset{\solDummy\in
\spacetimesubspace
}{\min} \|\sol-\solDummy\|_\infty  &= \|\sol-\solinfty\|_\infty
\geq\frac{1}{\sqrt{\ntimedof}}\|\sol-\solinfty\|_2
\geq\frac{1}{\lipschitzK\sqrt{\ntimedof}}
\|\res(\cdot;\solinfty)\|_{\weightmatst^T\weightmatst}\geq
\min_{\solDummy\in
\spacetimesubspace}\frac{1}{\lipschitzK\sqrt{\ntimedof}}
\|\res(\cdot;\solDummy)\|_{\weightmatst^T\weightmatst}\\
&=
\frac{1}{\lipschitzK\sqrt{\ntimedof}}
\|\res(\cdot;\solapproxST)\|_{\weightmatst^T\weightmatst}
\geq
\frac{\normEquiv}{\lipschitzK\sqrt{\ntimedof}}
\|\res(\cdot;\solapproxST)\|_{2}
\geq
\frac{\normEquiv\inverseLipschitzK}{\lipschitzK\sqrt{\ntimedof}}
\|\sol-\solapproxST\|_{2}\geq 
\frac{\normEquiv\inverseLipschitzK}{\lipschitzK\sqrt{\ntimedof}}
\|\sol-\solapproxST\|_{\infty}.
\end{split}
\end{align}
Noting that $\|\sol-\solapproxST\|_{\infty}\geq
\|\sol(t^n)-\solapproxST(t^n)\|_{2} $, $\forall n\innat{\ntimedof}$ and
substituting in the definitions of $\lipschitzK$ and
$\inverseLipschitzK$ from
Eqs.~\eqref{eq:deflipschitzK} and \eqref{eq:definvlipschitzK}, respectively,
yields the stated result.
\end{proof}

We now provide simplified variants of these error bounds in the case of
unweighted LSPG (Section \ref{sec:unweightedLSPG}) for which
$\weightmatstArg{n}=\identity{\ndof}$.

\begin{corollary}[Simplified \textit{a priori} error bound]\label{thm:simplifiedapriori}
If $\weightmatst = \identity{\nspacedof\ntimedof}$, then under Assumptions \ref{assLipschitz} and \ref{asssmallDt},
the error in the
\methodAcronym\ solution at any time instance can be bounded by the best
approximation error as
\begin{align}
\begin{split}
\label{eq:simplifiedAprioriL2}\|\sol-\solapproxST\|_{2}\leq
\left(1+
\lebesgue
\right)
\underset{\solDummy\in
\spacetimesubspace
}{\min}\|\sol-\solDummy\|_2,
\end{split}
\end{align}
\begin{align}\label{eq:simplifiedAprioriLinf}
\begin{split}
\max_{n\innat{\ntimedof}}\|\sol(t^n)-\solapproxST(t^n)\|_{2}\leq&
\sqrt{\ntimedof}
\left(1+
\lebesgue
\right)
\underset{\solDummy\in
\spacetimesubspace
}{\min} \max_{n\innat{\ntimedof}}\|\sol(t^n)-\solDummy(t^n)\|_2.
\end{split}
\end{align}
where we define the Lebesgue constant for a given time integrator and time step $\dt$ as
 \begin{equation} 
\lebesgue \defeq 
\frac{\sigmamax{\Alm}-\sigmamin{\Alm} + 2\dt\lipschitzFK\sigmamax{\Blm} }
{\sigmamin{\Alm}-\dt\lipschitzFK\sigmamax{\Blm}}.
 \end{equation} 
\end{corollary}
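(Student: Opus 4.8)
The plan is to obtain both inequalities as direct specializations of Theorems \ref{thm:aprioritwo} and \ref{thm:apriori} to the unweighted case $\weightmatst = \identity{\nspacedof\ntimedof}$. The first observation is that this choice makes the weighted space--time norm coincide with the ordinary $\ell^2$-norm: for every $\solDummy\in\spacetimesubspace$ one has $\|\weightmatst\res(\cdot;\solDummy)\|_2 = \|\res(\cdot;\solDummy)\|_2$, so Assumption \ref{assBounded} holds automatically---indeed with equality---and the residual-norm-equivalence constant may be taken as $\normEquiv = 1$. Consequently Assumptions \ref{assLipschitz} and \ref{asssmallDt} alone are enough to invoke both theorems, which is exactly the reduced hypothesis set claimed by the corollary.

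Next I would substitute $\weightmatst = \identity{\nspacedof\ntimedof}$ into the two stability factors. Since $\sigmamax{\weightmatst\Alm} = \sigmamax{\Alm}$ and $\sigmamax{\weightmatst\Blm} = \sigmamax{\Blm}$ and $\normEquiv = 1$, the factor in Theorem \ref{thm:aprioritwo} collapses to
\[
\frac{\sigmamax{\Alm} + \dt\lipschitzFK\sigmamax{\Blm}}{\sigmamin{\Alm} - \dt\lipschitzFK\sigmamax{\Blm}},
\]
whose denominator is strictly positive precisely because Assumption \ref{asssmallDt} enforces $\dt\lipschitzFK\sigmamax{\Blm} < \sigmamin{\Alm}$; the factor in Theorem \ref{thm:apriori} is this same expression multiplied by $\sqrt{\ntimedof}$.

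The only genuine computation---and it is a one-line rearrangement rather than an obstacle---is to identify this factor with $1 + \lebesgue$. Writing the numerator as $(\sigmamin{\Alm} - \dt\lipschitzFK\sigmamax{\Blm}) + (\sigmamax{\Alm} - \sigmamin{\Alm} + 2\dt\lipschitzFK\sigmamax{\Blm})$ gives
\[
\frac{\sigmamax{\Alm} + \dt\lipschitzFK\sigmamax{\Blm}}{\sigmamin{\Alm} - \dt\lipschitzFK\sigmamax{\Blm}} = 1 + \frac{\sigmamax{\Alm} - \sigmamin{\Alm} + 2\dt\lipschitzFK\sigmamax{\Blm}}{\sigmamin{\Alm} - \dt\lipschitzFK\sigmamax{\Blm}} = 1 + \lebesgue,
\]
exactly the stated definition of the Lebesgue constant. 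Inserting this into the two specialized bounds yields \eqref{eq:simplifiedAprioriL2} and \eqref{eq:simplifiedAprioriLinf}. I expect no real difficulty here: all the analytical content already resides in Theorems \ref{thm:aprioritwo} and \ref{thm:apriori}, and the remaining work reduces to recognizing that the unweighted choice trivializes Assumption \ref{assBounded} with $\normEquiv = 1$ and to the elementary split of the numerator above.
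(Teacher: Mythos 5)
Your proposal is correct and follows essentially the same route as the paper's own proof: specialize Theorems \ref{thm:aprioritwo} and \ref{thm:apriori} to $\weightmatst = \identity{\nspacedof\ntimedof}$ and observe that Assumption \ref{assBounded} then holds automatically with $\normEquiv = 1$. Your explicit one-line rearrangement identifying the resulting stability factor with $1+\lebesgue$ is exactly the algebra the paper leaves implicit in the word ``trivially,'' and it checks out.
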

\begin{proof}
Proofs follows trivially from Theorems \ref{thm:aprioritwo}  and
\ref{thm:apriori} by substituting $\weightmatst =
\identity{\nspacedof\ntimedof}$ and noting 
that Assumption \ref{assBounded} is automatically satisfied for this choice of
weighting matrix $\weightmatst$, as $\normEquiv = 1$ in this case.
\end{proof}
\begin{remark}[Stability-constant growth]\label{rem:stabilityConstant}
Figure \ref{fig:stabilityConstants} plots the dependence of the stability
constants in the \textit{a priori} error bounds \eqref{eq:simplifiedAprioriL2}
and
\eqref{eq:simplifiedAprioriLinf} as a function of the final time $\totaltime$
for multiple linear multistep methods and fixed values of the time step and
	Lipschitz constant.
Critically, note that the stability constant for the $\ell^2$-norm of the
error in the \methodAcronym\ ROM solution grows only linearly in time, while
the stability constant for the $\ell^\infty$-norm of the
error in the \methodAcronym\ ROM solution exhibits polynomial growth in time
with degree $3/2$. Further, this trend is valid for all assessed linear multistep
schemes. This highlights one important feature of the proposed method:
significantly slower time growth of the solution error in time relative to
nonlinear model-reduction methods that perform only spatial projection, as
such error bounds grow exponentially in time \cite{rathinam:newlook,knezevic2011reduced,nguyen2009reduced,carlbergJCP}.
This is similar to the slow time growth of the error bounds demonstrated in
the context of the space--time reduced-basis
method \cite{urban2012new,urban2014improved,yanoPateraUrban,yanoReview}.
\end{remark}
\begin{figure}[htbp] 
\centering 
\subfigure[stability constant in inequality
\eqref{eq:simplifiedAprioriL2}]{
\includegraphics[height=0.45\textwidth]{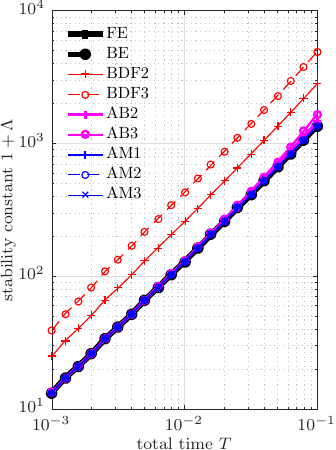} 
}~~~~~~~~~~~~~
\subfigure[stability constant in inequality
\eqref{eq:simplifiedAprioriLinf}]{
\includegraphics[height=0.45\textwidth]{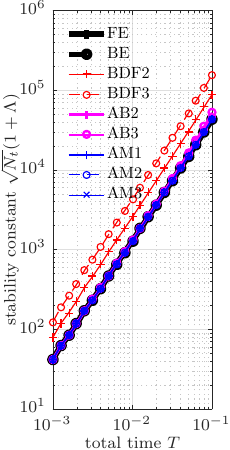} 
}
\caption{Stability constants in Corollary \ref{thm:simplifiedapriori} for
time step $\dt = 1\times 10^{-4}$, Lipschitz constant $\lipschitzFK = 1$, and
the following linear multistep methods: backward Euler (BE); backward
differentiation formulas (BDF2, BDF3); Adams--Bashforth with $s=2$ (AB2) and
$s=3$ (AB3); Adams--Moulton with $s=1$ (AM1), $s=2$ (AM2), and $s=3$ (AM3).
Note that the stability constant $1 + \lebesgue$ for the $\ell^2$-norm error
in inequality \eqref{eq:simplifiedAprioriL2} grows linearly in time, while the
stability constant 
$\sqrt{\ntimedof}(1 + \lebesgue)$ for the $\ell^\infty$-norm error
in inequality \eqref{eq:simplifiedAprioriLinf} exhibits polynomial time growth
with degree $3/2$.}
\label{fig:stabilityConstants} 
\end{figure} 

We now provide computable \textit{a posteriori} residual-based error bounds
and show that the \methodAcronym\ solution minimizes this bound over all
solutions in the space--time trial subspace.
\begin{corollary}[\textit{a posteriori} error
bound]\label{eq:aposterioribound}
Under Assumptions \ref{assLipschitz}, \ref{asssmallDt},
and \ref{assBounded}, the error in the
any approximation $\solDummy\in\spacetimesubspace$ can be bounded by the computed residual norm as
\begin{align}
\max_{n\innat{\ntimedof}}\|\sol(t^n)-\solDummy(t^n)\|_{2}\leq
\|\sol-\solDummy\|_{2}\leq
\frac{1}{\normEquiv\inverseLipschitzK}\|\res(\cdot;\solDummy)\|_{\weightmatst^T\weightmatst}.
\end{align}
Further, the \methodAcronym\ solution is the particular solution for which this
error bound is minimized, i.e., 
\begin{align}
\max_{n\innat{\ntimedof}}\|\sol(t^n)-\solapproxST(t^n)\|_{2}\leq
\|\sol-\solapproxST\|_{2}\leq
\frac{1}{\normEquiv\inverseLipschitzK}\min_{\solDummy\in
\spacetimesubspace}
\|\res(\cdot;\solDummy)\|_{\weightmatst^T\weightmatst}.
\end{align}
\end{corollary}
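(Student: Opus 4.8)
The plan is to treat this as a direct corollary assembled from the inverse Lipschitz continuity of the residual (Lemma \ref{lem:resinvlipschitz}), the residual-norm equivalence (Assumption \ref{assBounded}), the vanishing of the FOM residual $\res(\cdot;\sol)=\zerobold$ from Eq.~\eqref{eq:optLSPGST}, and elementary equivalence between the $\ell^2$ and $\ell^\infty$ norms over time. No new machinery is required; the only care needed is in tracking the directions of the inequalities and ensuring the constant being divided by is positive.

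First I would establish the leftmost inequality $\max_{n\innat{\ntimedof}}\|\sol(t^n)-\solDummy(t^n)\|_2\leq\|\sol-\solDummy\|_2$. This is purely norm equivalence: for any $\vbold\in\RR{\ndof}\otimes\timeDiscreteFuncSpace$ the definition of the $\ell^2$ norm gives $\|\vbold\|_2^2=\sum_{n=1}^{\ntimedof}\|\vbold(t^n)\|_2^2\geq\max_{n}\|\vbold(t^n)\|_2^2$, so applying this with $\vbold=\sol-\solDummy$ yields the claim immediately.

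Next I would prove the rightmost inequality for an arbitrary $\solDummy\in\spacetimesubspace$. Applying Lemma \ref{lem:resinvlipschitz} with $\wbold=\sol$ and $\ybold=\solDummy$ gives $\|\res(\cdot;\sol)-\res(\cdot;\solDummy)\|_2\geq\inverseLipschitzK\|\sol-\solDummy\|_2$; since $\res(\cdot;\sol)=\zerobold$, the left-hand side equals $\|\res(\cdot;\solDummy)\|_2$, whence $\|\sol-\solDummy\|_2\leq\frac{1}{\inverseLipschitzK}\|\res(\cdot;\solDummy)\|_2$. I would then invoke Assumption \ref{assBounded}, rearranged as $\|\res(\cdot;\solDummy)\|_2\leq\frac{1}{\normEquiv}\|\weightmatst\res(\cdot;\solDummy)\|_2=\frac{1}{\normEquiv}\|\res(\cdot;\solDummy)\|_{\weightmatst^T\weightmatst}$, to conclude $\|\sol-\solDummy\|_2\leq\frac{1}{\normEquiv\inverseLipschitzK}\|\res(\cdot;\solDummy)\|_{\weightmatst^T\weightmatst}$. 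Together with the norm-equivalence step this completes the first displayed chain.

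Finally, the second assertion follows by specializing $\solDummy=\solapproxST$ in the bound just proved and then invoking the optimality of the \methodAcronym\ solution from Eq.~\eqref{eq:optLSPGST}: because $\solapproxST$ minimizes $\|\res(\cdot;\solDummy)\|_{\weightmatst^T\weightmatst}$ over $\solDummy\in\spacetimesubspace$, we have $\|\res(\cdot;\solapproxST)\|_{\weightmatst^T\weightmatst}=\min_{\solDummy\in\spacetimesubspace}\|\res(\cdot;\solDummy)\|_{\weightmatst^T\weightmatst}$, and substitution yields the stated result. I expect no genuine obstacle here, as the proof is a short chain of previously established inequalities; the only points to watch are that Assumption \ref{assBounded} is applied in the correct direction (it lower-bounds the weighted norm by the unweighted one, hence upper-bounds the unweighted norm) and that $\inverseLipschitzK>0$ holds, which is guaranteed by Assumption \ref{asssmallDt}, so that dividing by $\normEquiv\inverseLipschitzK$ preserves the direction of the inequality.
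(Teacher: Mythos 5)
Your proposal is correct and follows essentially the same route as the paper's proof: both combine Assumption \ref{assBounded}, the inverse Lipschitz continuity of Lemma \ref{lem:resinvlipschitz} (together with $\res(\cdot;\sol)=\zerobold$), and the $\ell^2$--$\ell^\infty$ norm equivalence over time, then obtain the second bound from the optimality property \eqref{eq:optLSPGST}. The only differences are presentational---you build the chain of inequalities in the opposite direction and make explicit two points the paper leaves implicit (the vanishing FOM residual and the positivity of $\inverseLipschitzK$ under Assumption \ref{asssmallDt}).
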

\begin{proof}
By invoking Assumption \ref{assBounded}, Lemma \ref{lem:resinvlipschitz}, and norm equivalence $\|\wbold\|_2\geq\|\wbold\|_\infty$, we can derive
\begin{align}
\begin{split}
\|\res(\cdot;\solapproxST)\|_{\weightmatst^T\weightmatst}
\geq
\normEquiv
\|\res(\cdot;\solapproxST)\|_{2}
\geq
\normEquiv\inverseLipschitzK
\|\sol-\solapproxST\|_{2}\geq 
\normEquiv\inverseLipschitzK
\|\sol-\solapproxST\|_{\infty},
\end{split}
\end{align}
which yields the first desired result. The second result follows from applying
the optimality property of the \methodAcronym\ solution \eqref{eq:optLSPGST}.
\end{proof}

\section{Numerical experiments}\label{sec:experiments}
This section compares the performance of the following methods:
\begin{itemize} 
\item \textit{FOM}. This model corresponds to the full-order model, i.e., the
solution satisfying Eq.~\eqref{eq:LMMresidual}.
\item \textit{LSPG ROM}. This model corresponds to the unweighted LSPG
ROM, i.e., the solution that satisfies Eq.~\eqref{eq:lspggnat} with $\weightmat =
\identity{\nspacedof}$.
\item \textit{GNAT ROM}. This model corresponds to the GNAT ROM, i.e., the
solution that satisfies Eq.~\eqref{eq:lspggnat} with $\weightmat =
(\samplemat \basismatres)^{+}\samplemat$. Algorithm 5 in Ref.~\cite{CarlbergGappy}
is used to construct the sampling matrix $\samplemat$.
\item \textit{ST-LSPG-1 ROM}. This model corresponds to the unweighted
\methodAcronym\ ROM,
i.e., the solution 
that satisfies Eq.~\eqref{eq:t-lspgReducedLargerFull} with
$\weightmatst = \identity{\nspacedof\ntimedof}$. 
The method is also characterized by the following:
 \begin{itemize} 
 \item Tailored temporal state subspaces computed according to
Eqs.~\eqref{eq:tailoredSVD}--\eqref{eq:tailoredSVD2}. 
\item As described in Section \ref{sec:initialGuess}, interpolation to
compute the initial guess. For this, we employ interpolation using linear radial
basis functions as described in Ref.~\cite{chirokov2016radial}.
\end{itemize}
\item \textit{ST-LSPG-2 ROM}.
This model is identical to the ST-LSPG-1 ROM except that it employs 
fixed temporal subspaces computed according to
Eqs.~\eqref{eq:temporalSVD3}--\eqref{eq:temporalSVD3Next}. 
\item \textit{ST-GNAT-1 ROM}. This model corresponds to the ST-GNAT ROM,
i.e., the solution 
that satisfies Eq.~\eqref{eq:t-lspgReducedLargerFull} with
$\weightmatst =
 (\samplematst\basismatrest)^+\samplematst$. 
 Otherwise, it is identical to the ST-LSPG-1 ROM with the additional following
 attributes:
 \begin{itemize} 
 \item Tailored temporal residual subspaces computed 
 according to 
 Eqs.~\eqref{eq:tailoredresSVD}--\eqref{eq:tailoredresSVD2}.
  \item Method \ref{resTrain:ROMtraining} in Section \ref{sec:spacetimeresidual} to
generate space--time residual samples, where
$\paramDomainTrainres=\paramDomainTrain$.
 \item Method \ref{greedyTimeThenSpace} in Section
 \ref{sec:constructSamplingMatrix} to construct the sampling matrix.
	 \end{itemize}
\item \textit{ST-GNAT-2 ROM}.
This model is identical to the ST-GNAT-1 ROM except that it employs 
a fixed temporal state subspace computed according to
Eqs.~\eqref{eq:temporalSVD3}--\eqref{eq:temporalSVD3Next}.
\end{itemize}

We assess the accuracy of any ROM solution $\solapproxFuncOnlyParam$ from its
mean squared state-space error, i.e.,
\begin{equation} 
\mseError  = \left. \sqrt{\sum_{n=1}^\ntimedof\|\solapproxFuncArg{n}-\solFuncArg{n}\|_2^2} \middle/ 
\sqrt{\sum_{n=1}^\ntimedof\|\solFuncArg{n}\|_2^2} \right., 
\end{equation} 
and we measure its computational cost in terms of the wall time incurred by the
ROM relative to that incurred by the FOM; the speedup is the
reciprocal of the relative wall time.  All timings are
obtained by performing calculations on an Intel(R) Xeon(R) CPU E5-2670 @ 2.60
GHz, 31.4 GB RAM using the \verb+MORTestbed+ \cite{zahr2010comparison} in
MATLAB. All reported timings are averaged over five simulations.

\subsection{Parameterized Burgers' equation}\label{sec:paramBurgers}
We first consider the parameterized inviscid Burgers' equation described in
Ref.~\cite{tpwl}, which corresponds to the following initial boundary value
problem for $x\in[0,1]$ and $t\in[0,\totaltime]$ with $\totaltime = 0.5$:
 \begin{align}\label{eq:burgers_eq} 
\begin{split}
   \frac{\partial w(x,t;\mu)}{\partial t} + \frac{\partial f(w(x,t;\mu))}{\partial x} &= 0.02e^{\mu_2 x}, 
   \quad \forall x\in[0,1], \quad \forall t\in[0,\totaltime]\\
   w(0,t;\param) &= \mu_1, \quad \forall t\in[0,\totaltime] \\
   w(x,0) &= 1, \quad \forall x\in[0,1]
\end{split}
 \end{align} 
 where $w: [0,1] \times [0,\totaltime]\times\paramDomain \rightarrow
 \RR{}$ is a conserved quantity and the $\nparam=2$ parameters comprise
 the left boundary value and source-term coefficient with 
$\param\equiv(\mu_1,\mu_2)\in\paramDomain = [1.2,1.5]\times[0.02,0.025]$.

After applying Godunov's scheme for spatial discretization with 100 control
volumes, Eqs.~\eqref{eq:burgers_eq} leads to a parameterized initial-value ODE
problem consistent with Eq.~\eqref{eq:fom} with $\nspacedof=100$ spatial
degrees of freedom. For time discretization, we employ the backward Euler scheme, which is a linear multistep method
characterized by $\kn=1$, $\alpha_0^n=\beta_0^n=1$, $\alpha_1^n=-1$,
$\beta_1^n=0$, $n\innat{\ntimedof}$. We \reviewerA{employ a uniform time step
of} $\dt = 2.5\times
10^{-4}$, leading to $\ntimedof = 2000$ time instances.  
% Figure \ref{fig:burgerFOM} shows the full-order model response for this problem for several
% chosen parameter instances.
% \begin{figure}[htbp] 
% \centering 
% 	\subfigure[$\param = (1.2, 0.02)$]{
%    \includegraphics[width=0.3\textwidth]{solutionFOM_burgersTrain1.eps}
% 	}
% 	\subfigure[$\param = (1.5, 0.02)$]{
%    \includegraphics[width=0.3\textwidth]{solutionFOM_burgersTrain4.eps}
% 	}
% 	\subfigure[$\param = (1.5,0.025)$]{
%    \includegraphics[width=0.3\textwidth]{solutionFOM_burgersTrain8.eps}
% 	}
% \caption{\textit{Burgers' equation.} FOM solutions for different parameter
% instances.}
%  \label{fig:burgerFOM} 
% 	\end{figure} 
For this problem, all ROMs employ a training set 
%$\paramDomainTrain= \{ (1.2,0.02),(1.3,0.02),(1.4,0.02),(1.5,0.02),(1.2,0.025),(1.3,0.025),(1.4,0.025),(1.5,0.025) \}$
$\paramDomainTrain= \{
	1.2,1.3,1.4,1.5
	\}\times\{0.02,0.025\}$ such that $\ntrain=8$
at which the FOM is solved. 

\reviewerBRtwo{We
	emphasize that (1) all assessed models employ the same time discretization;
	this includes the ST-LSPG and ST-GNAT
	ROMs, as the space--time residual is defined from this time discretization, and (2) we do not consider adaptive time-step
selection.
Future work will investigate
the effect of different time integrators---including those that employ
adaptive time-step selection---on the relative performance of
the methods.}
\subsubsection{Space--time bases}

 Figure \ref{fig:spacetime_basis_burgers} plots a selection of spatial and temporal
 modes computed using the three different techniques proposed in Section
 \ref{sec:spacetimetrialconstruct}. Note that the fixed temporal modes are nearly
 identical, regardless of whether the T-HOSVD or ST-HOSVD is employed. Thus,
 because the ST-HOSVD is significantly less computationally expensive, we no
 longer consider the fixed modes computed with T-HOSVD, which was the approach
 considered in Ref.~\cite{baumann2016space}.  On the other hand,
 the tailored temporal modes are significantly different from the fixed temporal
 modes. Further, they appear to be well suited for their respective spatial
 modes, as the temporal bases for higher-index spatial POD modes exhibit higher
 frequencies, which is consistent with previous studies (e.g.,
 Ref.~\cite{carlbergGalDiscOpt}).

\begin{figure}[htbp] 
  \centering 
  \subfigure[Spatial modes]{
  \includegraphics[width=0.3\textwidth]{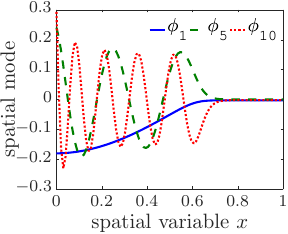} 
  }
  \subfigure[Fixed temporal modes, T-HOSVD]{
  \includegraphics[width=0.3\textwidth]{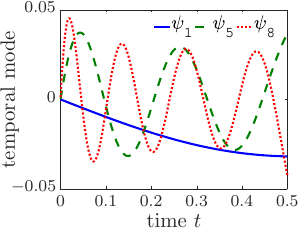} 
  }
  \subfigure[Fixed temporal modes, ST-HOSVD]{
  \includegraphics[width=0.3\textwidth]{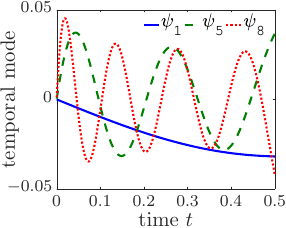} 
  }
  \subfigure[Tailored temporal modes for spatial mode $\basisvecspace_1$]{
  \includegraphics[width=0.3\textwidth]{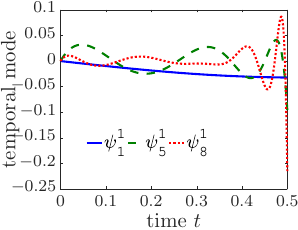} 
  }
  \subfigure[Tailored temporal modes for spatial mode $\basisvecspace_5$]{
  \includegraphics[width=0.3\textwidth]{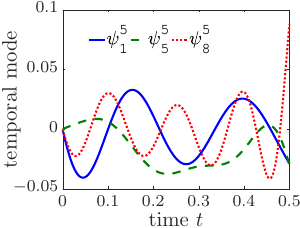} 
  }
  \subfigure[Tailored temporal modes for spatial mode $\basisvecspace_{10}$]{
  \includegraphics[width=0.3\textwidth]{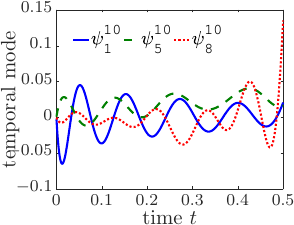} 
  }
  \caption{\textit{Burgers' equation.} Spatial and temporal modes computed
  using different tensor-decomposition techniques (see Section \ref{sec:spacetimetrialconstruct}).}
  \label{fig:spacetime_basis_burgers}
\end{figure}

\subsubsection{Model predictions}\label{sec:burgers_model_prediction}

We now compare the methods for fixed values of their parameters, and for
two randomly selected online points $\param^1 =
(1.35,0.0229)\not\in\paramDomainTrain$ and $\param^2 =
(1.45,0.0201)\not\in\paramDomainTrain$.  Table \ref{tab:burgersPerform}
reports the method parameter values and the associated performance of the
methods.  Figure \ref{fig:spacetime_solutions_burgers} reports snapshots of
the methods' responses for $t \in\{ 0,0.1665,0.3332,0.5\}$.  

\begin{table}[ht] 
\centering 
\small
 \begin{tabular}{|c||c|c||c|c||c|c|} 
\hline
method & LSPG & GNAT & ST-LSPG-1 & ST-LSPG-2 & ST-GNAT-1 & ST-GNAT-2\\
\hline
$\nbasisspace$ & 15 & 15 & 15 & 15 & 15 & 15 \\
$\nressample$  &    & 55 &    &    &    &    \\ 
$\nbasisres$   &    & 55 &    &    &    &    \\ 
$\nbasistime^i$&    &    & 2  &    & 2   &    \\ 
$\nbasistime$  &    &    &    & 20   &    &  20  \\ 
$\nresspaceind$&    &    &    &    & 30  & 30  \\ 
$\nrestimeind$ &    &    &    &    &120   & 120  \\ 
$\nbasisspaceres$&    &  &    &    & 100  & 100   \\ 
$\nbasistimeres^i$&    &  &    &    &  3  & 10  \\ 
 spatiotemporal dimension&  $3\times 10^4$  & $3\times 10^4$ & 30  & 300  & 30  & 300 \\ 
  \hline 
relative error for $\param^1$  & 0.00074& 0.011 & 0.0025 & 0.0011 & 0.0058 & 0.0063 \\
speedup   for $\param^1$     & 0.82  & 0.34   & 0.34   & 0.079  & 7.19   & 1.78 \\
\hline
relative error for $\param^2$& 0.0012 & 0.017 & 0.0038 & 0.0040 & 0.0077 & 0.0082 \\
speedup        for $\param^2$& 0.80   & 0.39  & 0.33   & 0.080  & 6.22   & 1.72 \\
\hline 
\end{tabular} 
\caption{\textit{Burgers' equation}. ROM method performance
  for fixed method parameters at randomly selected online points $\param^1 =
		(1.35,0.0229)\not\in\paramDomainTrain$ and $\param^2 =
		(1.45,0.0201)\not\in\paramDomainTrain$.} 
\label{tab:burgersPerform} 
\end{table} 
 \begin{figure}[htbp]
  \centering
	\subfigure[$\param = (1.35,0.0229)\not\in\paramDomainTrain$]{
   \includegraphics[width=0.45\textwidth]{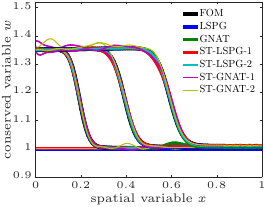} 
	}
	\subfigure[$\param = (1.45,0.0201)\not\in\paramDomainTrain$]{
   \includegraphics[width=0.45\textwidth]{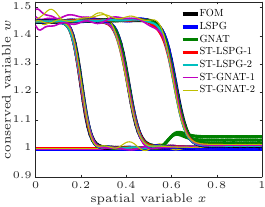} 
	}
  \caption{\textit{Burgers' equation.} 
	Method solutions for $t \in\{ 0,0.1665,0.3332,0.5\}$ corresponding to  method
parameters  reported in Table \ref{tab:burgersPerform}.
           }
  \label{fig:spacetime_solutions_burgers}
 \end{figure}

First, note that all ROMs generate accurate responses for this
particular combination of parameters, as the relative errors
are less than 1\% in all cases. Second, note that LSPG generates the most
accurate responses, but fails to generate any speedup due to its lack of
hyper-reduction. GNAT also fails to generate speedup in this case due to the
relatively small spatial dimension of the FOM and the larger number of Newton
iterations required for convergence relative to LSPG. The proposed
ST-LSPG methods incur slightly larger errors than the LSPG method, but they do
so with orders of magnitude fewer space--time degrees of freedom. This
highlights the promise of performing projection in both space and time: the
dimensionality of the problem can be significantly reduced while retaining
high levels of accuracy. However, due to their lack of hyper-reduction, the ST-LSPG
methods do not generate speedups. Finally, by employing
hyper-reduction, the ST-GNAT methods generate very
accurate predictions with significant speedups. 
We note that ST-LSPG-1 and ST-GNAT-1 exhibit better overall performance than ST-LSPG-2 and
ST-GNAT-2, respectively; this suggests that employing tailored temporal subspaces 
enables similar accuracy to be achieved using far fewer degrees of freedom, as
each temporal basis vector is tailored to its associated spatial basis vector.

\subsubsection{Method-parameter study}\label{sec:methodParam}
This section compares the performance of the ROM methods across a variation of
all method parameters. This study is essential to objectively compare the methods,
as the particular method-parameter values employed in Section
\ref{sec:burgers_model_prediction} did not necessarily yield optimal
performance for a given method.  For this reason, we subject each model to a
parameter study wherein each model parameter is varied between specified
limits; Table \ref{tab:burgersPerformParam} reports the tested parameter values for each method. 
We consider all elements in the resulting set if they satisfy the following
constraints:
$1.5\nbasisspace \leq \nbasisresst \leq \nressample$ for GNAT and
$1.5\nbasisst \leq \nbasisresst \leq \nresspaceind\nrestimeind$ for ST-GNAT-1
and ST-GNAT-2.
From these results, we then construct a Pareto front for each method,
which is characterized by the method parameters that minimize the competing
objectives of relative error and relative wall time. 
\begin{table}[ht] 
\centering 
\small
 \begin{tabular}{|c||c|c||c|c||c|c|} 
\hline
method & LSPG & GNAT & ST-LSPG-1 & ST-LSPG-2 & ST-GNAT-1 & ST-GNAT-2\\
\hline
$\nbasisspace$ & $\{10\times i\}_{i=1}^5$ &  $\{10\times i\}_{i=1}^5$ &  $\{10\times i\}_{i=1}^5$ &  $\{10\times i\}_{i=1}^5$ &  $\{10\times i\}_{i=1}^5$ & $\{10\times i\}_{i=1}^5$ \\
$\nressample$  &    & $\{20,30,40,60,80,90\}$&    &    &    &    \\ 
$\nbasisres$   &    & $\{20,30,40,60,80,90\}$ &    &    &    &    \\ 
$\nbasistime^i$&    &    & $\{3,4,5,6,8\}$  &    & $\{3,4,5,6,8\}$   &    \\ 
$\nbasistime$  &    &    &    & $\{5,10,20,30\}$   &    &  $\{5,10,20,30\}$  \\ 
$\nresspaceind$&    &    &    &    & $\{30,40\}$  & $\{30,40,60,70,80\}$  \\ 
$\nrestimeind$ &    &    &    &    &$\{60,120\}$   & $\{120\}$ \\ 
$\nbasisspaceres$&    &  &    &    &  $\{100\}$  & $\{100\}$   \\ 
$\nbasistimeres^i$&    &  &    &    &  $\{3\}$  & $\{10\}$  \\ 
 %trial-subspace dimension&  $2\times 10^4$  & $2\times 10^4$ & 20  & 200  & 20  & 200 \\ 
\hline 
\end{tabular} 
\caption{\textit{Burgers' equation}. Parameters used for the method-parameter
study. The set of tested parameters comprises the Cartesian product of the
specified parameter sets.
We consider all elements in the resulting set if they satisfy the following
guidelines:
$1.5\nbasisspace \leq \nbasisresst \leq \nressample$ for GNAT and
$1.5\nbasisst \leq \nbasisresst \leq \nresspaceind\nrestimeind$ for ST-GNAT-1 and ST-GNAT-2.
} 
\label{tab:burgersPerformParam} 
\end{table}

 Figure \ref{fig:paretoFrontBurgers} reports these Pareto fronts for the two
 online points, as well as an `overall' Pareto front that selects the
 Pareto-optimal methods across all parameter variations.  Table \ref{tab:burgersParamOverallPareto} reports
 values of the method parameters that yielded Pareto-optimal performance. The proposed
 ST-GNAT-1 method is Pareto-optimal for relative wall times less than one
 (i.e., faster than the FOM simulation).  While the proposed ST-GNAT-2 method
 does produce speedups, it is dominated by ST-GNAT-1; this provides further
 evidence of the advantage of employing a tailored relative to a fixed
 temporal basis.  We note that the worst-performing methods correspond to the
 ST-LSPG-1, and ST-LSPG-2 methods, as their lack of hyper-reduction leads to
 significant wall times that far exceed that of the FOM. Further,
 we note for a fixed error below a certain threshold, the ST-GNAT-1 method
 is nearly two orders of magnitude faster than the original GNAT method;
 this can be attributed to the fact that this approach reduces both the
 spatial and temporal complexities of the FOM. \reviewerA{Finally, we note
	 that because the spatial trial subspace employed by LSPG and GNAT has a
	 (relatively large) spatiotemporal dimension of $\nbasisspace\ntimedof$, while the space--time
	 trial subspace employed by ST-LSPG and ST-LSPG has a (relatively small) spatiotemporal
	 dimension of $\nbasisst(\ll\nbasisspace\ntimedof)$, the LSPG and GNAT
	 methods are able to generate smaller errors than the space--time methods.
	 However, this is achieved at significant computational cost that exceeds that
	 of the FOM in this case (i.e., relative wall times greater than one). Thus,
	 for this problem, LSPG is Pareto-optimal  \reviewerBRthree{and outperforms the space--time
	 ROMs} for relative errors less than
	 $10^{-6}$, although this regime is not useful because it incurs relative
 wall times greater than one.}

  \begin{figure}[htbp] 
    \centering 
		\subfigure[$\param^1 = (1.35,0.0229)\not\in\paramDomainTrain$]{
      \includegraphics[width=0.48\textwidth]{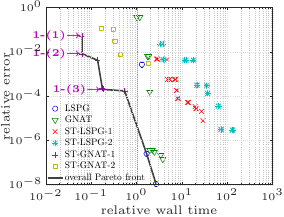} 
		}
		\subfigure[$\param^2 = (1.45,0.0201)\not\in\paramDomainTrain$]{
      \includegraphics[width=0.48\textwidth]{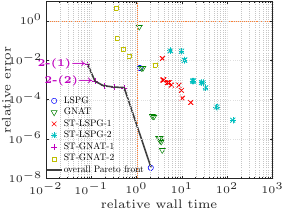} 
		}
    \caption{\textit{Burgers' equation.} Relative error versus relative wall time for
		varying model parameters reported in Table
		\ref{tab:burgersPerform}.}
    \label{fig:paretoFrontBurgers} 
  \end{figure} 

\begin{table}[ht] 
\centering 
\small
 \begin{tabular}{|c||c|c|c|c|c|c|} 
\hline
label & $\nbasisspace$ & $\nbasistime^i$ &  $\nresspaceind$ & 
$\nrestimeind$ & $\nbasisspaceres$ & $\nbasistimeres^i$ \\
\hline
 1-(1)&10&3&30&60&100&3 \\ 
 1-(2)&10&4&30&60&100&3 \\ 
 1-(3)&30&3&40&120&100&3 \\ \hline
 2-(1)&20&3&30&60&100&10 \\ 
 2-(2)&30&3&40&120&100&10 \\ 
 %trial-subspace dimension&  $2\times 10^4$  & $2\times 10^4$ & 20  & 200  & 20  & 200 \\ 
\hline 
\end{tabular} 
\caption{\textit{Burgers' equation}. Parameter values yielding Pareto-optimal
performance for the ST-GNAT-1 method. Figure \ref{fig:paretoFrontBurgers}
provides labels.} 
\label{tab:burgersParamOverallPareto} 
\end{table} 

\subsection{Quasi 1D Euler equation}
We now consider a parameterized quasi-1D Euler equation associated with
modeling inviscid compressible flow in a one-dimensional converging--diverging nozzle with a
continuously varying cross-sectional area \cite[Chapter 13]{maccormackNote};
Figure \ref{fig:converging-diverging_nozzle} depicts the problem geometry. 
The governing system of nonlinear PDEs is
 \begin{equation}\label{eq:euler} 
   \frac{\partial \solw}{\partial t} + \frac{1}{\crossareaSymb}\frac{\partial (\flux(\solw) \crossareaSymb)}{\partial x} 
   = \externalforce(\solw), \quad \forall x\in[0,1]\ \text{m}, \quad \forall t\in[0,\totaltime], 
 \end{equation} 
where $\totaltime=0.6$ s and
 \begin{equation} 
      \solw = \bmat{\densitySymb \\ \densitySymb \velocitySymb \\ \energydensity}, 
\quad \flux(\solw) = \bmat{\densitySymb \velocitySymb \\ \densitySymb \velocitySymb^2 + \pressureSymb \\ 
                    (\energydensity+\pressureSymb)\velocitySymb},
\quad \externalforce(\solw) = \bmat{0 \\ \frac{\pressureSymb}{\crossareaSymb}\frac{\partial \crossareaSymb}{\partial x}\\0},
\quad
      \pressureSymb = (\specificheat-1)\densitySymb\energypermass,
\quad \energypermass = \frac{\energydensity}{\densitySymb} - \frac{\velocitySymb^2}{2},
\quad \crossareaSymb = \crossareaSymb(x).
 \end{equation}
Here,
$\densitySymb$ denotes density,
$\velocitySymb$ denotes velocity,
$\pressureSymb$ denotes pressure, 
$\energypermass$ denotes potential energy per unit mass,
$\energydensity$ denotes total energy density,
$\specificheat$ denotes the specific heat ratio, and
$\crossareaSymb$ denotes the converging--diverging nozzle cross-sectional area.
We employ a
specific heat ratio of $\specificheat=1.3$, 
a specific gas constant of $\specificgasconstant=355.4$
$\text{m}^2/\text{s}^2/\text{K}$,
a total temperature of $\temperatureSymb_t = 300$ K,
and a total pressure of $\pressureSymb_t = 10^6$ $\text{N}/\text{m}^2$.
The cross-sectional area $A(x)$ is determined by a cubic spline interpolation over the points 
$(x,A(x)) \in \{ (0,0.2), (0.25,0.173), (0.5,0.17), (0.75, 0.173), (1,0.2)
\}$, which results in
 \begin{equation}
   A(x) = \left\{  
             \begin{array}{ll}
                 -0.288x^3 + 0.4080x^2 - 0.1920x + 0.2,                   &
								 x\in[0,0.25)\ \text{m} \\
           -0.288(x-0.25)^3 + 0.1920(x-0.25)^2 - 0.0420(x-0.25) + 0.1730, & x\in[0.25,0.5) \ \text{m}\\
            0.288(x-0.5)^3 - 0.0240(x-0.5)^2 + 0.17,                      & x\in[0.5,0.75) \ \text{m}\\
            0.288(x-0.75)^3 + 0.1920(x-0.75)^2 + 0.0420(x-0.75) + 0.1730, & x\in[0.75,1]\ \text{m}.
             \end{array}
          \right.   
 \end{equation} 
We assume a perfect gas that obeys the ideal gas law (i.e., $\pressureSymb = \densitySymb \specificgasconstant \temperatureSymb$).
The initial flow field is created in several steps.
First, the following isentropic relations are used to generate a zero pressure-gradient flow field:
 \begin{align}
   \begin{split}
     \machSymb(x) &= \frac{\machSymb_m\crossareaSymb_m}{\crossareaSymb(x)}\left ( \frac{1+\frac{\specificheat-1}{2}\machSymb(x)^2}{1+\frac{\gamma-1}{2}\machSymb_m^2} \right
  )^{\frac{\specificheat+1}{2(\specificheat-1)}},\quad
  \pressureSymb(x) = \pressureSymb_t\left (1+\frac{\specificheat-1}{2}\machSymb(x)^2\right )^{\frac{-\specificheat}{\specificheat-1}}
  \\ \temperatureSymb(x) &= \temperatureSymb_t\left
	(1+\frac{\specificheat-1}{2}\machSymb(x)^2\right )^{-1},\quad
  \densitySymb(x) = \frac{\pressureSymb(x)}{R\temperatureSymb(x)},\quad
  \speedofsoundSymb(x) =
	\sqrt{\specificheat\frac{\pressureSymb(x)}{\densitySymb(x)}} ,\quad
  \velocitySymb(x) = \machSymb(x) \speedofsoundSymb(x),
   \end{split}
 \end{align} 
 where a subscript $m$ indicates the flow quantity at $x=0.5$ m, and
 $\machSymb$ denotes the
 Mach number.
Then, a shock is placed at $x=0.85$ m of the flow field. 
   We use the jump relations for a stationary shock and 
   the perfect gas equation of state to derive the velocity across the shock
	 $\velocitySymb_2$, which satisfies
   the quadratic equation
 \begin{equation} \label{eq:quadratic}
   \left ( \frac{1}{2}-\frac{\specificheat}{\specificheat-1} \right ) \velocitySymb_2^2 
   + \frac{\specificheat}{\specificheat-1}\frac{n}{m}\velocitySymb_2 - h = 0.
 \end{equation}  
Here, $m \defeq \densitySymb_2\velocitySymb_2 = \densitySymb_1\velocitySymb_1$, 
      $n\defeq\densitySymb_2\velocitySymb_2^2+\pressureSymb_2 = \densitySymb_1\velocitySymb_1^2+\pressureSymb_1$,
      $h\defeq(\energydensity_2+\pressureSymb_2)/\densitySymb_2 =
			(\energydensity_1+\pressureSymb_1)/\densitySymb_1$, 
		and subscripts $1$ and $2$ denote a flow quantity to the left and to the
		right of
the shock, respectively. We employ the solution $\velocitySymb_2$
			to Eq.~\eqref{eq:quadratic}, which leads to a discontinuity (i.e., shock).
Finally, the exit pressure is increased to a factor $\pexit$ of its
original value in order to generate transient dynamics.  
\begin{figure}[htbp] 
  \centering 
  \includegraphics[width=0.4\textwidth]{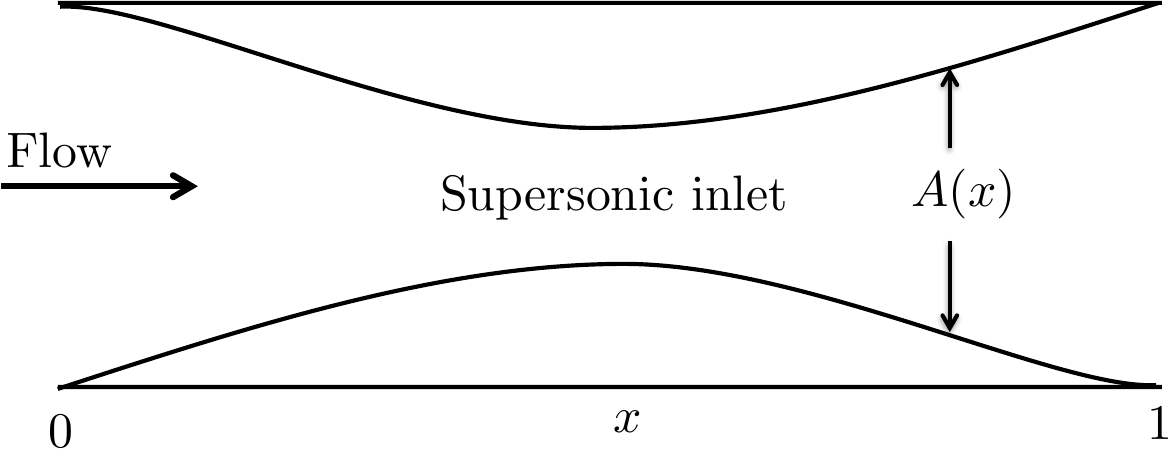} 
  \caption{\textit{Quasi-1D Euler.} Schematic figures of converging-diverging nozzle. } 
  \label{fig:converging-diverging_nozzle} 
\end{figure}

Applying a finite-volume spatial discretization with 50 equally spaced control
volumes and fully implicit boundary conditions leads to a parameterized system
of nonlinear ODEs consistent with Eq.~\eqref{eq:fom} with $\nspacedof = 150$
spatial degrees of freedom.  The Roe flux difference vector splitting method
is used to compute the flux at each intercell face \cite[Chapter
9]{maccormackNote}.  For time discretization, we again apply the backward
Euler scheme and a \reviewerA{uniform} time step of $\dt = 0.001$ s, leading to $\ntimedof=600$.

For this problem, we use the following two parameters: 
the pressure factor $\paramSymb_1 = \pexit$ and the Mach number at the middle of the nozzle
$\paramSymb_2 = \machSymb_m$.
All ROMs employ a training set at which the FOM is solved of
$\paramDomainTrain = \{1.7+0.01i\}_{i=0}^3\times \{1.7,1.72\}$ such that
$\ntrain = 8$.
% Figure \ref{fig:solutionFOM_euler} shows the full-order model response 
% for this problem for several chosen parameter instances.
% \begin{figure}[htbp] 
% \centering 
% 	\subfigure[$\param = (1.50,1.7)$]{
%    \includegraphics[width=0.3\textwidth]{solutionFOM_euler.eps}
% 	}
% 	\subfigure[$\param = (1.56,1.7)$]{
%    \includegraphics[width=0.3\textwidth]{solutionFOM_euler.eps}
% 	}
% 	\subfigure[$\param = (1.56,1.72)$]{
%    \includegraphics[width=0.3\textwidth]{solutionFOM_euler.eps}
% 	}
% \caption{\textit{Quasi-1D Euler equation.} Density ($\densitySymb$) component of the FOM solution for different parameter
% instances.} 
%  \label{fig:solutionFOM_euler} 
% 	\end{figure} 

\subsubsection{Space--time bases}

 Figure \ref{fig:spacetime_basis_euler} plots several spatial and temporal
 modes computed using the three different techniques proposed in Section
 \ref{sec:spacetimetrialconstruct}. As with the Burgers equation, the `fixed'
 temporal modes are nearly identical, regardless of whether the T-HOSVD or
 ST-HOSVD is employed, rendering the ST-HOSVD more appealing due to its
 reduced computational cost. In addition, the tailored temporal modes are
 significantly different, with the temporal basis exhibiting higher
 frequencies for higher-index spatial modes as expected.

\begin{figure}[htbp] 
  \centering 
  \subfigure[Spatial modes]{
  \includegraphics[width=4.7cm]{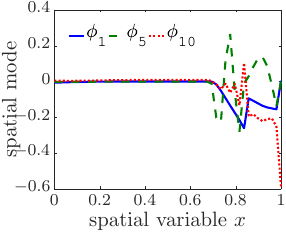} 
  }
  \subfigure[Fixed temporal modes, T-HOSVD]{
  \includegraphics[width=4.7cm]{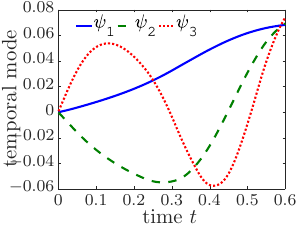} 
  }
  \subfigure[Fixed temporal modes, ST-HOSVD]{
  \includegraphics[width=4.7cm]{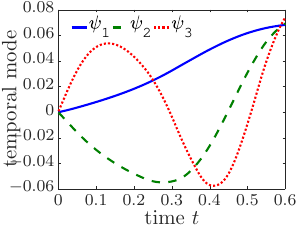} 
  }
  \subfigure[Tailored temporal modes for spatial mode $\basisvecspace_1$]{
  \includegraphics[width=4.7cm]{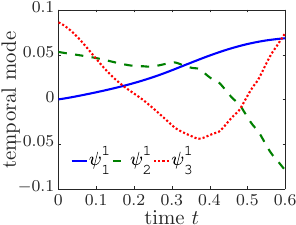} 
  }
  \subfigure[Tailored temporal modes for spatial mode $\basisvecspace_5$]{
  \includegraphics[width=4.7cm]{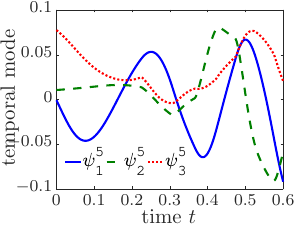} 
  }
  \subfigure[Tailored temporal modes for spatial mode $\basisvecspace_{10}$]{
  \includegraphics[width=4.7cm]{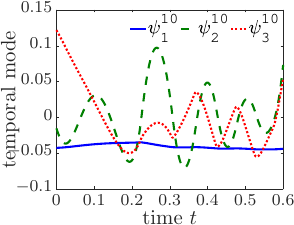} 
  }
  \caption{\textit{Quasi-1D Euler equation.} Spatial and temporal modes computed
  using different techniques (see Section \ref{sec:spacetimetrialconstruct}).}
  \label{fig:spacetime_basis_euler}
\end{figure}

\subsubsection{Model predictions}\label{sec:euler_model_prediction}

We now compare the methods for fixed values of their parameters, and for
two randomly selected online points $\param^1 =
		(1.7125,1.71)\not\in\paramDomainTrain$ and $\param^2 =
		(1.7225,1.705)\not\in\paramDomainTrain$.  Table \ref{tab:eulerPerform}
reports the method parameter values and the associated performance of the
methods.  Figure \ref{fig:spacetime_solutions_euler} reports snapshots of
the methods' responses for $t\in\{0,\totaltime\}$.

\begin{table}[ht] 
\centering 
\small
 \begin{tabular}{|c||c|c||c|c||c|c|} 
\hline
method & LSPG & GNAT & ST-LSPG-1 & ST-LSPG-2 & ST-GNAT-1 & ST-GNAT-2\\
\hline
$\nbasisspace$ & 50 & 50 & 50 & 50 & 50 & 50 \\
$\nressample$  &    & 145 &    &    &    &    \\ 
$\nbasisres$   &    & 145 &    &    &    &    \\ 
$\nbasistime^i$&    &    & 3  &    & 3   &    \\ 
$\nbasistime$  &    &    &    & 30   &    &  30  \\ 
$\nresspaceind$&    &    &    &    & 120  & 140  \\ 
$\nrestimeind$ &    &    &    &    &20   & 100  \\ 
$\nbasisspaceres$&    &  &    &    &  150  & 150   \\ 
$\nbasistimeres^i$&    &  &    &    &  10  & 10 \\ 
 spatiotemporal dimension&  $3\times 10^4$  & $3\times 10^4$ & 150  & $1.5\times 10^3$  &
 150  & $1.5\times 10^3$ \\ 
  \hline 
relative error for $\param^1$  & $7.78\times 10^{-6}$ & 0.55 & 0.012 & $6.3\times 10^{-4}$ & 0.0023 & 0.0048 \\
speedup   for $\param^1$     &0.77 & 1.04 & 0.84 & 0.58 & 21.79 & 0.49  \\
\hline
relative error for $\param^2$& $8.31\times 10^{-6}$ & 0.026 & 0.0076 & 0.0021 & 0.0025 & 0.0040 \\
speedup        for $\param^2$& 0.81 & 1.01 & 0.85 & 0.41 & 22.52 & 2.79  \\
\hline 
\end{tabular} 
\caption{\textit{Quasi-1D Euler equation}. ROM method performance
  for fixed method parameters at randomly selected online points $\param^1 =
		(1.7125,1.71)\not\in\paramDomainTrain$ and $\param^2 =
		(1.7225,1.705)\not\in\paramDomainTrain$.} 
\label{tab:eulerPerform} 
\end{table}

 \begin{figure}[htbp]
  \centering
	\subfigure[$\param = (1.7125,1.71)\not\in\paramDomainTrain$]{
   \includegraphics[width=0.45\textwidth]{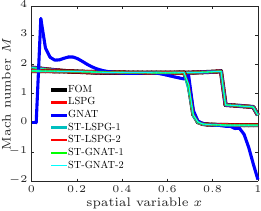} 
	}
	\subfigure[$\param = (1.7225,1.705)\not\in\paramDomainTrain$]{
   \includegraphics[width=0.45\textwidth]{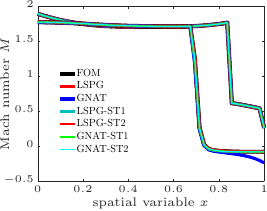} 
	}
  \caption{\textit{Quasi-1D Euler equation.} 
	Method solutions for $t \in\{ 0,\totaltime\}$ corresponding to  method
parameters  reported in Table \ref{tab:eulerPerform}.
}
  \label{fig:spacetime_solutions_euler}
 \end{figure}

Conclusions are similar to those derived from the Burgers' equation results.
First, note that all ROMs except for GNAT in the case of $\param^1$  generate accurate responses, as the relative errors
are less than 3\% in all cases. Second, as before, LSPG generates the most
accurate responses, but fails to generate any speedup due to its lack of
hyper-reduction. The proposed
ST-LSPG methods incur sub-2\% errors, but they do
so with orders of magnitude fewer spatiotemporal degrees of freedom relative to
the LSPG and GNAT methods, which 
highlights the promise of performing projection in both space and time. Again,
as these methods do not employ hyper-reduction, they do not generate speedups.
Finally, the ST-GNAT methods 
generate both 
accurate predictions with significant speedups.
We note that ST-GNAT-1 performs better than 
ST-GNAT-2, providing further evidence of the ability of tailored bases to produce
accurate responses with fewer degrees of freedom.

\subsubsection{Method-parameter study}\label{sec:methodParamtwo}

We again compare the performance of the ROM methods across a wide variation of
all method parameters.  Table \ref{tab:eulerPerformParam} reports the tested
parameter values for each method.  We consider all elements in the resulting
set if they satisfy constraints $1.5\nbasisspace \leq \nbasisresst \leq
\nressample$ for GNAT and $1.5\nbasisst \leq \nbasisresst \leq
\nresspaceind\nrestimeind$ for ST-GNAT-1 and ST-GNAT-2.  From these results,
we then construct a Pareto front for each method, which is characterized by
the method parameters that minimize the competing objectives of relative error
and relative wall time. 

\begin{table}[ht] 
\centering 
\small
 \begin{tabular}{|c||c|c||c|c||c|c|} 
\hline
method & LSPG & GNAT & ST-LSPG-1 & ST-LSPG-2 & ST-GNAT-1 & ST-GNAT-2\\
\hline
$\nbasisspace$ & $\{10\times i\}_{i=1}^6$ &  $\{10\times i\}_{i=1}^6$ &  $\{10\times i\}_{i=1}^6$ &  $\{10\times i\}_{i=1}^6$ &  $\{10\times i\}_{i=1}^6$ & $\{10\times i\}_{i=1}^6$ \\
$\nressample$  &    & $\{10\times i\}_{i=2}^{10}\cup\{120,145\}$&    &    &    &    \\ 
$\nbasisres$   &    & $\{10\times i\}_{i=2}^{10}\cup\{120,145\}$ &    &    &    &    \\ 
$\nbasistime^i$&    &    & $\{i\}_{i=3}^8$  &    & $\{i\}_{i=3}^8$   &    \\ 
$\nbasistime$  &    &    &    & $\{10\times i\}_{i=2}^5$   &    &  $\{10\times i\}_{i=2}^5$  \\ 
$\nresspaceind$&    &    &    &    & $\{120\}$  & $\{140\}$  \\ 
$\nrestimeind$ &    &    &    &    &$\{10,20,30,40,60\}$   & $\{30,50,100,150\}$ \\ 
$\nbasisspaceres$&    &  &    &    &  150  & 150   \\ 
$\nbasistimeres^i$&    &  &    &    &  10  & 10  \\ 
 %trial-subspace dimension&  $2\times 10^4$  & $2\times 10^4$ & 20  & 200  & 20  & 200 \\ 
\hline 
\end{tabular} 
\caption{\textit{Quasi-1D Euler equation}. Parameters used for the method-parameter
study. The set of tested parameters comprises the Cartesian product of the
specified parameter sets.
We consider all elements in the resulting set if they satisfy the following
guidelines:
$1.5\nbasisspace \leq \nbasisresst \leq \nressample$ for GNAT and
$1.5\nbasisst \leq \nbasisresst \leq \nresspaceind\nrestimeind$ for ST-GNAT-1 and ST-GNAT-2.
} 
\label{tab:eulerPerformParam} 
\end{table} 

Figure \ref{fig:paretoFrontEuler} reports these Pareto fronts for the two
online points, as well as an overall Pareto front that selects the
Pareto-optimal methods across all parameter variations.  
Table \ref{tab:eulerParamOverallPareto} reports
 values of the method parameters that yielded Pareto-optimal performance.
These results show
that---as before---the proposed ST-GNAT-1 method is Pareto optimal for
relative wall time less than 0.9 and relative errors less than 20\%.  While
the proposed ST-GNAT-2 method produces speedups, it is again dominated by
ST-GNAT-1, further highlighting the advantage of tailored versus fixed
temporal bases.  Again, the worst-performing methods correspond to the
ST-LSPG-1, and ST-LSPG-2 methods, as their lack of hyper-reduction leads to
significant wall times that far exceed that of the FOM. Further,
we note that for a fixed error below a certain threshold, the ST-GNAT-1 method is
over one order of magnitude faster than the original GNAT method; this can
be attributed to the fact that this approach reduces both the spatial and
temporal complexities of the FOM.
\reviewerA{Finally, we again note that
	LSPG is Pareto-optimal \reviewerBRthree{and outperforms the space--time
	ROMs} for extremely small \reviewerBRthree{relative} errors
	\reviewerBRthree{less than approximately $3\times 10^{-5}$} due to the higher
	spatiotemporal dimensionality of the spatial trial subspace; however, this
	regime is not useful for this problem, as it leads to LSPG models roughly as
	expensive as the FOM (i.e., relative wall times near one).}

  \begin{figure}[htbp] 
    \centering 
		\subfigure[$\param = (1.7125,1.71)\not\in\paramDomainTrain$]{
      \includegraphics[width=0.48\textwidth]{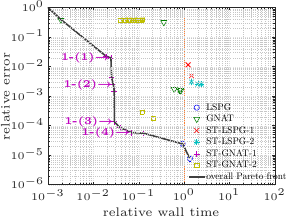} 
		}
		\subfigure[$\param = (1.7225,1.705)\not\in\paramDomainTrain$]{
      \includegraphics[width=0.48\textwidth]{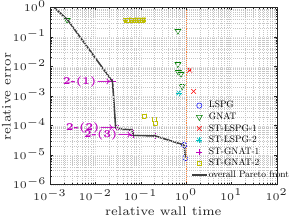} 
		}
    \caption{\textit{Quasi-1D Euler equation}. Relative error versus relative wall time for
		varying model parameters reported in Table \ref{tab:eulerPerformParam}.}
    \label{fig:paretoFrontEuler} 
  \end{figure} 

\begin{table}[ht] 
\centering 
\small
 \begin{tabular}{|c||c|c|c|c|c|c|} 
\hline
label & $\nbasisspace$ & $\nbasistime^i$ & $\nresspaceind$ & 
$\nrestimeind$ & $\nbasisspaceres$ & $\nbasistimeres^i$  \\
\hline
 1-(1)&10&3&120&10&150&10 \\ 
 1-(2)&60&3&120&10&150&10 \\ 
 1-(3)&30&4&120&10&150&10 \\ 
 1-(4)&50&8&120&10&150&10 \\ \hline
 2-(1)&30&3&120&10&150&10 \\ 
 2-(2)&50&4&120&10&150&10 \\ 
 2-(3)&60&7&120&10&150&10 \\ 
 %trial-subspace dimension&  $2\times 10^4$  & $2\times 10^4$ & 20  & 200  & 20  & 200 \\ 
\hline 
\end{tabular} 
\caption{\textit{Quasi-1D Euler equation}. 
Parameter values yielding Pareto-optimal
performance for the ST-GNAT-1 method. Figure \ref{fig:paretoFrontEuler}
provides labels.
} 
\label{tab:eulerParamOverallPareto} 
\end{table} 

\section{Conclusions}\label{sec:conclusions}

This work proposed a model-reduction method for nonlinear dynamical systems
based on space--time least-squares Petrov--Galerkin projection. The method
computes optimal approximations by minimizing the discrete space--time
residual over all elements in a low-dimensional space--time trial subspace in
a weighted $\ell^2$-norm. Advantages of the method include:
 \begin{itemize} 
\item 
its ability to
reduce both the spatial and temporal dimensions of the dynamical system
(Remark \ref{rem:stlspgreducedim}), 
%\item its \reviewerBRtwo{ability to remove} spurious temporal modes
%from the ROM response (Remark \ref{rem:spurious}),
\item \reviewerBRthree{\textit{a priori}} error bounds that
\reviewerBRthree{bound the solution error by the best space--time
approximation error and whose stability constants}
exhibit subquadratic growth in time (Section \ref{sec:error}),
\item applicability to general nonlinear dynamical systems,  
\item hyper-reduction
that reduces the complexity in the presence of general nonlinearities (Section
\ref{sec:spacetimehyper}), and
\item its ability to extract multiple space--time
basis vectors from each training simulation via tensor decomposition (Section
\ref{sec:error}).
	 \end{itemize}
In addition to introducing the novel \methodAcronym\ method, this work
proposed specific approaches to computing the method's ingredients: the
space--time trial subspace (Section \ref{sec:spacetimetrialconstruct}), the
space--time residual basis in the case of ST--GNAT (Section
\ref{sec:spacetimeresidual}), the sampling matrix in the case of
hyper-reduction (Section \ref{sec:constructSamplingMatrix}), and the initial
guess used in the Gauss--Newton method applied to solve the nonlinear
least-squares problem (Section \ref{sec:initialGuess}).  Numerical experiments
demonstrated the ability of the proposed method to generate
orders-of-magnitude speedups over existing spatial-projection-based ROMs
without sacrificing accuracy.  

Future work entails implementing the method in parallel
computational-mechanics codes\reviewerA{,} devising techniques to reduce the amount of
storage required for the state and residual tensors\reviewerBRtwo{, and 
	assessing
the effect of different time integrators---as well as adaptive
time-stepping---on method performance.
}

\section*{Acknowledgments}
The authors gratefully acknowledge Tamara Kolda and Grey Ballard for
insightful discussions that led to the tensor-decomposition approaches for computing the
space--time trial subspace.  The authors also acknowledge Professors Benjamin
Peherstorfer and Masayuki Yano for useful comments received at the Model
Reduction for Parametrized Systems (MoRePaS) III workshop and the 2017 SIAM
Conference on \reviewerA{Computational} Science and Engineering, respectively. 
\ourReReading{The authors also gratefully acknowledge the helpful comments
provided by the anonymous reviewers.} 
\ourReReading{This work was performed at Sandia National Laboratories and was
supported by the LDRD program (project 190968).}
\ourReReading{Sandia National Laboratories is a multimission laboratory
	managed and operated by National Technology and Engineering Solutions of
	Sandia, LLC., a wholly owned subsidiary of Honeywell International, Inc.,
	for the U.S.\ Department of Energy's National Nuclear Security Administration
	under contract DE-NA-0003525.}
\ourReReading{Lawrence Livermore National Laboratory is operated by Lawrence 
Livermore National Security, LLC, for the U.S. Department of Energy, National 
Nuclear Security Administration under Contract DE-AC52-07NA27344.}

\appendix

\bibliography{references}
\bibliographystyle{siam}
\end{document}